\documentclass[11pt,reqno]{amsart}
\usepackage[margin=1in]{geometry}
\usepackage{dsfont}
\usepackage{amsmath,amssymb,amsfonts,mathtools,bm}
\usepackage{amsthm}
\numberwithin{equation}{section}
\usepackage{enumitem}
\usepackage{booktabs}
\usepackage{algorithm}
\usepackage{algpseudocode}
\usepackage{graphicx}
\usepackage{subcaption}
\usepackage{float}
\usepackage{array}
\usepackage{xcolor}
\usepackage{microtype}

\usepackage{aliascnt}

\usepackage[colorlinks=true, pdfstartview=FitV, linkcolor=cyan,citecolor=red, urlcolor=blue]{hyperref}

\definecolor{labelkey}{rgb}{0,0,1}

\newtheorem{theorem}{Theorem}[section]

\newaliascnt{lemma}{theorem}
\newtheorem{lemma}[lemma]{Lemma}
\aliascntresetthe{lemma}

\newaliascnt{proposition}{theorem}
\newtheorem{proposition}[proposition]{Proposition}
\aliascntresetthe{proposition}

\newaliascnt{corollary}{theorem}
\newtheorem{corollary}[corollary]{Corollary}
\aliascntresetthe{corollary}

\newaliascnt{definition}{theorem}
\newtheorem{definition}[definition]{Definition}
\aliascntresetthe{definition}

\newaliascnt{remark}{theorem}
\newtheorem{remark}[remark]{Remark}
\aliascntresetthe{remark}

\newaliascnt{claim}{theorem}

\aliascntresetthe{claim}

\newaliascnt{assumption}{theorem}
\newtheorem{assumption}[assumption]{Assumption}
\aliascntresetthe{assumption}

\newcommand{\R}{\mathbb R}
\newcommand{\E}{\mathbb E}
\newcommand{\cP}{\mathcal P}
\newcommand{\cQ}{\mathcal Q}
\newcommand{\cW}{\mathcal{W}_2}

\newcommand{\Law}{\operatorname{Law}}
\newcommand{\KL}{D_{\mathrm{KL}}}
\newcommand{\norm}[1]{\left\lVert #1\right\rVert}

\newcommand{\ip}[2]{\left\langle #1,#2\right\rangle}
\newcommand{\dd}{\,\mathrm d}
\newcommand{\barV}{\overline V}
\newcommand{\grad}{\nabla}
\newcommand{\Hess}{\nabla^2}
\newcommand{\defect}{\mathfrak d}

\def \Var {\mathrm{Var}}

\title[Stability of finite-batch particle MFVI]{Stability of Finite-Batch Particle Mean-Field Variational Inference Beyond Strong Convexity}
\author[Vinh Nguyen]{Vinh Nguyen$^1$}
\address{$^1$Department of Mathematics, University of California, Berkeley, CA 94704, USA}
\email{vnguyen26@berkeley.edu}

\author[Truong Vu]{Truong Vu$^{2,3}$}
\address{$^2$Departments of Mathematics and Computational Medicine, University of California, Los Angeles, CA 90095, USA}
\address{$^3$Applied Mathematics and Computational Research (AMCR) division, Lawrence Berkeley National Laboratory, Berkeley, CA 94720, USA}
\email{truongvu@math.ucla.edu}

\date{August 2026}
\subjclass[2020]{65K10, 65C35, 62F15, 60H10, 90C15}
\keywords{mean-field variational inference, interacting particle method, non-asymptotic stability, Wasserstein gradient flow, random batch, nonconvex optimization}
\thanks{\textbf{Acknowledgment.} V.N. was partially supported by an AMS--Simons travel grant.}

\begin{document}

\begin{abstract}
We study the implementable finite-batch particle algorithm for mean-field
variational inference as a fully discrete stochastic approximation of the
projected Wasserstein dynamics.  The target potential is globally smooth but
need not be strongly convex.  The departure from contractivity is quantified by the curvature defect
\[
\mathfrak d_\alpha(x,y)
=
\bigl[\alpha\|x-y\|^2-
\langle\nabla V(x)-\nabla V(y),x-y\rangle\bigr]_+,
\]
which is the additive loss in the one-step Euler contraction estimate.  We prove a
non-asymptotic Wasserstein stability bound that separates
initialization, product-empirical approximation, finite-batch drift error,
time discretization, and the defects accumulated along the coupled
trajectories.  Under the uniform bound $\mathfrak d_\alpha\leq\beta$, the
particle iterates remain within $O(\sqrt{\beta/\alpha})$ of any MFVI
minimizer, up to explicit errors in the particle number, batch size, and step
size.  The proof uses a stationary comparison array whose population law is
an MFVI minimizer but whose particle-level law is a random product empirical
measure, and it controls the resulting projected-drift discrepancy
explicitly.  We also give coordinatewise defect estimates and structural
conditions for dimension-independent projected-drift sensitivity, construct
an arbitrary-dimensional smooth nonconvex benchmark with a closed-form MFVI
minimizer, and explain why polynomially growing drifts require a modification
of the untamed explicit scheme.
\end{abstract}

\maketitle

\section{Introduction}
\label{sec:introduction}

Mean-field variational inference (MFVI) approximates a probability density
\[
p(x)=Z^{-1}e^{-V(x)},\qquad x\in\R^m,
\]
by minimizing relative entropy over product measures.  The product constraint
makes storage, sampling, and marginal computations tractable and has made
MFVI a standard approximation in Bayesian statistics and machine learning;
see \cite{BKM2017,WJ2008}.  From the computational point of view, however,
the minimization problem is infinite-dimensional, the coordinate marginals
remain coupled through the potential $V$, and an implementable algorithm
introduces additional errors through particle approximation, stochastic
evaluation of the projected drift, and time discretization.  We derive a quantitative stability estimate that keeps these errors separate
for smooth targets that need not be strongly log-concave.

Du et al. \cite{DWZZ2026} introduced the particle algorithm for mean-field
variational inference (PAVI), an interacting-particle discretization of the
projected Wasserstein dynamics associated with MFVI.  Their finite-particle
analysis assumes the global Hessian bounds
\begin{equation*}
\alpha I_m\leq \nabla^2V(x)\leq LI_m,
\qquad x\in\R^m,
\end{equation*}
with $\alpha>0$.  The upper bound controls the explicit Euler step, whereas
the lower bound provides the monotonicity needed for contraction.  We retain
the global smoothness assumption but replace strong convexity by an additive
measure of its failure.  This separates the numerical requirement of a
Lipschitz drift from the geometric mechanism responsible for contraction.

For one Euler step, set
\[
T_h(x)=x-h\nabla V(x)
\]
and every $x,y\in\R^m$,
\begin{equation}
\label{eq:euler-defect-identity}
\begin{aligned}
\|T_h(x)-T_h(y)\|^2
&=
\|x-y\|^2
-2h\langle\nabla V(x)-\nabla V(y),x-y\rangle
+h^2\|\nabla V(x)-\nabla V(y)\|^2
\\
&\leq
(1-2\alpha h+L^2h^2)\|x-y\|^2
+2h\mathfrak d_\alpha(x,y),
\end{aligned}
\end{equation}
where
\begin{equation*}
\mathfrak d_\alpha(x,y)
=
\left[
\alpha\|x-y\|^2
-
\langle\nabla V(x)-\nabla V(y),x-y\rangle
\right]_+.
\end{equation*}
The quantity $\mathfrak d_\alpha$ is the additive loss from the contraction
estimate associated with $\alpha$-strong convexity.  Our main estimate retains
this defect along the coupled trajectories instead of replacing it at once by
a worst-case constant.  The commonly used uniform form
\[
\mathfrak d_\alpha(x,y)\leq\beta
\]
is equivalent to
\[
\langle\nabla V(x)-\nabla V(y),x-y\rangle
\geq
\alpha\|x-y\|^2-\beta,
\]
but the trajectory-dependent formulation also covers situations in which the
nonconvex region is visited only rarely.

Our principal result is a non-asymptotic error estimate for the finite-batch
scheme.  Let $q_{X_n}$ denote the product empirical law of the
PAVI array after $n$ steps, let $q^\star$ be any MFVI minimizer, and let
$\varepsilon_N(q^\star)$ be the root-mean-square product-empirical error of
$N$ independent samples from $q^\star$.  Under the uniform defect bound and
the explicit step-size restriction, \autoref{cor:uniform} gives, up to
universal numerical constants,
\begin{equation}
\label{eq:intro-main-bound}
\begin{aligned}
\bigl(\E \cW^2(q_{X_n},q^\star)\bigr)^{1/2}
&\lesssim
(1-c\alpha h)^{n/2}
\bigl(\E \cW^2(q_{X_0},q^\star)\bigr)^{1/2}
\\
&\quad+
\left(1+\frac{\kappa}{\alpha}\right)
\varepsilon_N(q^\star)
+\sqrt{\frac{\beta}{\alpha}}
+\sqrt{\frac{h\Gamma_\star}{\alpha B}}
+\frac{L}{\alpha}\sqrt{h(h\Gamma_\star+m)},
\end{aligned}
\end{equation}
where
\[
\Gamma_\star=\E_{q^\star}\|\nabla V\|^2
\]
and $\kappa$ measures the sensitivity of the projected drift to perturbations
of the product law.  The sharper estimate in \autoref{thm:main-defect}
replaces $\beta$ by a geometrically weighted sum of the defects encountered by
the stationary coupling.

Estimate \eqref{eq:intro-main-bound} separates the computational parameters
of the method.  The particle number $N$ controls the product-empirical error,
the batch size $B$ controls the stochastic projected-drift error, and the
step size $h$ controls both the transient contraction and the Euler error.  A
generic implementation requires
\[
O(mNB\,C_{\partial V})
\]
work per iteration when one coordinate derivative costs $C_{\partial V}$;
see \autoref{prop:complexity}.  Over a fixed physical time horizon $T$, this
becomes $O(mNBT\,C_{\partial V}/h)$.  Consequently, increasing $B$ is useful
only until the batch error is comparable with the particle and discretization
errors.  Analyzing the finite-batch algorithm directly exposes this accuracy--cost
tradeoff, which is hidden after passing to the full projected drift or the
continuum dynamics.

The proof uses a comparison process that is stationary at the population level
and compatible with the empirical algorithm.  We construct an array
whose rows have the marginals of $q^\star$ and couple it to the PAVI array by
predictable rowwise optimal matchings.  The product of the stationary row
empirical measures is random and is not equal to the deterministic product
law $q^\star$.  Hence the projected drift evaluated at this empirical law has
a nonvanishing discrepancy from the drift at $q^\star$, even after
conditioning.  This discrepancy produces the $\kappa\varepsilon_N(q^\star)$ term in
\eqref{eq:intro-main-bound}.  The same
one-step recursion then incorporates the finite-batch variance, the local
Euler--Maruyama remainder, and the encountered curvature defect.  Random
subsampling is related in spirit to random-batch methods for interacting
particle systems \cite{JLL2020}, but here the batch approximates a
coordinatewise expectation under a random product empirical law, so a
separate law-approximation estimate is unavoidable.

Without strong convexity, the analysis also requires several variational and
integrability estimates.  Under the uniform defect condition we prove quadratic coercivity
of $V$ and existence of MFVI minimizers, establish the implication from global
minimizers to the coordinatewise Gibbs equations, derive centered moment
bounds for the stationary marginals without log-concavity, and prove
well-posedness of the independent-projection McKean--Vlasov diffusion.  The
continuous-time coupling satisfies an analogous accumulated-defect estimate.
We also introduce coordinatewise defects and structural bounds for $\kappa$.
Although the generic estimate is $\kappa\leq\sqrt m\,L$, sparse or bounded
cross-coordinate interactions can give a dimension-independent bound.  Since
squared Wasserstein distance is additive across product coordinates,
$m^{-1/2}\cW$ is the per-coordinate root-mean-square metric used to interpret
dimension dependence.

The present analysis is connected with several strands of work on MFVI,
Wasserstein dynamics, and particle computation.  Convergence of coordinate
ascent variational inference has been studied under log-concavity and related
conditions in \cite{AL2026,BPY2025,LZ2024}; Caprio, Corenflos, and Power
\cite{CCP2026} obtain local Wasserstein contraction under a
transport-information inequality and functional smoothness assumptions.  The
Wasserstein-gradient-flow formulation of variational inference builds on
\cite{AGS2008,JKO1998}; independent projections and quantitative mean-field
approximations are developed in \cite{L2026,LMY2024,YY2025}.  Jiang, Chewi,
and Pooladian \cite{JCP2026} give a polyhedral-optimization approach to MFVI
in Wasserstein space.  More broadly, computational discretizations of
Wasserstein gradient flows include primal-dual schemes \cite{CCWW2022}, while
regularized Stein variational gradient flow provides a particle-based
approximation of KL gradient flow with convergence, discretization,
well-posedness, and stability analysis \cite{HBSL2025}.

Other recent variational methods are complementary to PAVI.  Wu and Blei
\cite{WB2026} extend the mean-field family through entropic regularization,
Lyu et al. \cite{LYMS2026} develop a mini-batch primal-dual method for
large-scale parametric MFVI, generalized Wasserstein particle flows are
studied in \cite{CZYZ2023}, and non-asymptotic bounds for particle gradient
descent appear in \cite{CKPJ2025}.  Borghi and Carrillo \cite{BC2026} use
Gaussian interacting particles in the Bures--Wasserstein geometry.  These
methods differ from PAVI in the variational class, parameterization, or
measure-valued dynamics.  Perturbation stability of the MFVI optimizer itself
was studied for strongly log-concave targets in \cite{SWGN2025}; our target
and optimizer are fixed, and the stability problem concerns the fully
discrete particle dynamics.

The curvature-defect viewpoint is also related to coupling methods for
nonconvex Langevin dynamics under dissipativity or curvature-at-infinity
conditions \cite{E2016,EGZ2019,MMS2020}.  Such methods can recover strict contraction in a modified transport distance
for the exact diffusion.  PAVI
requires, in addition, control of the coordinatewise law dependence, random
product empirical approximation, and finite-batch drift evaluation.  At a
broader level, interacting swarms have been analyzed for finite-dimensional
nonconvex optimization in \cite{LTZ2024,TZ2024,DGLT2024}.  Those algorithms use communication among agents to locate minimizers of a
finite-dimensional objective, whereas PAVI uses particles to approximate a
product probability law; their connection here is therefore at the level of
interacting-particle optimization rather than MFVI itself.

The assumption $\|\nabla^2V\|_{\mathrm{op}}\leq L$ is retained throughout.
For the untamed explicit algorithm this is a numerical restriction, not merely
an analytical hypothesis: explicit Euler--Maruyama schemes can
diverge for superlinearly growing drifts even under favorable coercivity
conditions \cite{HJK2011,HJK2012}, while tamed or otherwise modified schemes
are designed for that regime \cite{BDMS2019}.  We therefore isolate the
polynomial-growth case as an obstruction to the present algorithm rather than
claiming an extension that would require a different discretization.

We also construct an arbitrary-dimensional smooth nonconvex interaction model
with a closed-form unique MFVI minimizer.  The
benchmark permits direct measurement of particle, step-size, batch, defect,
and dimension effects without an unknown optimization error in the reference
law.  The numerical experiments are intended to test the scaling mechanisms
in the theorem rather than to serve as an empirical comparison among
variational-inference algorithms.

The remainder of the paper is organized as follows.
\autoref{sec:mfvi-defects} introduces the MFVI problem and the curvature-defect
framework.  \autoref{sec:variational-foundations} establishes the variational
foundations and stationary moment estimates.  \autoref{sec:independent-projection}
studies the independent-projection diffusion and its continuous-time defect
estimate.  \autoref{sec:finite-batch-pavi} presents finite-batch PAVI and its
computational cost.  \autoref{sec:stationary-array} develops the
stationary-array coupling and the one-step recursion, and
\autoref{sec:main-stability} proves the non-asymptotic stability bounds.
\autoref{sec:benchmark} gives the nonconvex benchmark,
\autoref{sec:numerics} reports the numerical experiments,
\autoref{sec:polynomial} explains the polynomial-drift obstruction, and
\autoref{sec:discussion} concludes with limitations and extensions.
\section{MFVI and curvature defects}
\label{sec:mfvi-defects}

For $p\geq 1$, let $\cP_p(\R^d)$ denote the set of Borel
probability measures on $\R^d$ with finite $p$-th moment, and let
$\cP_{\mathrm{ac}}(\R^d)$ denote the set of Borel probability measures
that are absolutely continuous with respect to Lebesgue measure. We write
\[
\cP_{p,\mathrm{ac}}(\R^d)
:=
\cP_p(\R^d)\cap\cP_{\mathrm{ac}}(\R^d).
\]
When no confusion can arise, we use the same symbol for an
absolutely continuous probability measure and its density with
respect to Lebesgue measure.  We adopt the convention $0\log0=0$. For $\mu,\nu\in\cP_2(\R^d)$, the quadratic Wasserstein distance is
\[
\cW^2(\mu,\nu)
=
\inf_{\gamma\in\Pi(\mu,\nu)}
\int_{\R^d\times\R^d}
\norm{x-y}^2\,\gamma(\dd x,\dd y),
\]
where $\Pi(\mu,\nu)$ denotes the set of couplings of $\mu$ and $\nu$
(see, for example, \cite{V2009}).

\smallskip
The mean-field family on $\R^m$ is
\[
\cQ
=
\left\{
q^1\otimes\cdots\otimes q^m:
q^i\in\cP_{\mathrm{ac}}(\R),\ i\in[m]
\right\},
\qquad
[m]:=\{1,\ldots,m\}.
\]
Given a potential $V:\R^m\to\R$, set
\[
Z:=\int_{\R^m}e^{-V(x)}\,\dd x.
\]
Whenever $Z<\infty$, the associated target probability density is
\[
p(x)=Z^{-1}e^{-V(x)}.
\]
The mean-field variational inference problem is
\begin{equation}
\label{eq:mfvi}
\inf_{q\in\cQ}\KL(q\|p).
\end{equation}
Under the uniform curvature-defect condition introduced below, quadratic
coercivity in \autoref{lem:coercivity} implies $Z<\infty$. Hence the main
results require no separate normalization assumption.

\smallskip
Throughout the numerical analysis we impose the following global smoothness
assumption.

\begin{assumption}
\label{ass:smooth}
The potential $V\in C^2(\R^m)$, and there exists $L>0$ such that
\begin{equation*}
\norm{\Hess V(x)}_{\mathrm{op}}\leq L,
\qquad x\in\R^m.
\end{equation*}
\end{assumption}

The gradient $\grad V$ is therefore globally $L$-Lipschitz, and Taylor's
formula gives
\begin{equation}
\label{eq:quadratic-growth-V}
\left|
V(x)-V(0)-\ip{\grad V(0)}{x}
\right|
\leq
\frac{L}{2}\norm{x}^2,
\qquad x\in\R^m,
\end{equation}
so $V$ has at most quadratic growth in absolute value.

For $i\in[m]$, write $x^{-i}$ for the collection of all coordinates of
$x$ except the $i$-th one. If $q=\bigotimes\limits_{i=1}^m q^i$, define
\[
q^{-i}:=\bigotimes_{k\neq i}q^k.
\]
For $z^{-i}\in\R^{m-1}$ and $x\in\R$, let
\[
z[i\leftarrow x]
=
(z_1,\ldots,z_{i-1},x,z_{i+1},\ldots,z_m).
\]
For $\mu\in\cP_2(\R^{m-1})$, define the projected potential
\begin{equation}
\label{eq:barV}
\barV_i(x,\mu)
=
\int_{\R^{m-1}}
V\bigl(z[i\leftarrow x]\bigr)\,\mu(\dd z^{-i}).
\end{equation}
The integral in \eqref{eq:barV} is finite by
\eqref{eq:quadratic-growth-V} and the second-moment assumption on $\mu$.
The projected potentials enter both the coordinatewise Gibbs updates and the
drift of the particle dynamics.

\smallskip
For $r\in\R$, write $[r]_+=\max\{r,0\}$.  The following quantity measures
the failure of strong monotonicity.

\begin{definition}
\label{def:defect}
For $\alpha>0$, the curvature defect of $V$ is
\begin{equation*}
\defect_\alpha(x,y)
=
\left[
\alpha\norm{x-y}^2
-
\ip{\grad V(x)-\grad V(y)}{x-y}
\right]_+,
\qquad x,y\in\R^m.
\end{equation*}
For $\beta\geq 0$, we say that $V$ has uniform
$(\alpha,\beta)$-defect if
\begin{equation}
\label{eq:uniform-defect}
\defect_\alpha(x,y)\leq\beta,
\qquad x,y\in\R^m.
\end{equation}
Equivalently,
\begin{equation}
\label{eq:defective-monotonicity}
\ip{\grad V(x)-\grad V(y)}{x-y}
\geq
\alpha\norm{x-y}^2-\beta,
\qquad x,y\in\R^m.
\end{equation}
\end{definition}

When $\beta=0$, condition \eqref{eq:defective-monotonicity} is the usual
$\alpha$-strong monotonicity of $\grad V$. For $V\in C^2$, it is
equivalent to
\[
\Hess V(x)\geq\alpha I_m,
\qquad x\in\R^m.
\]
The parameter $\beta$ therefore measures the additive failure of global
strong convexity. Under \autoref{ass:smooth}, a finite uniform
$(\alpha,\beta)$-defect also requires $\alpha\leq L$, since
\[
\ip{\grad V(x)-\grad V(y)}{x-y}
\leq
L\norm{x-y}^2,
\]
and comparison with \eqref{eq:defective-monotonicity} at arbitrarily large
separations yields the claim.

The product structure also leads to a coordinatewise form of the defect.

\begin{remark}
\label{rem:coordinate-defects}
The product structure permits coordinatewise bookkeeping when nonconvexity is
localized to only some coordinates or interactions.  Define
\[
\defect_{\alpha,i}(x,y)
=
\left[
\alpha|x_i-y_i|^2
-
\bigl(\partial_iV(x)-\partial_iV(y)\bigr)(x_i-y_i)
\right]_+.
\]
Since $[\sum_i a_i]_+\leq\sum_i[a_i]_+$,
\begin{equation*}
\defect_\alpha(x,y)
\leq
\sum_{i=1}^m\defect_{\alpha,i}(x,y).
\end{equation*}
Consequently, if
$\defect_{\alpha,i}(x,y)\leq\beta_i$ for all $x,y$, then every theorem below
that uses the uniform full-dimensional bound remains valid with
$\beta=\sum_i\beta_i$.  In particular, after normalizing the product metric by
\[
\overline{\cW}(\mu,\nu)=m^{-1/2}\cW(\mu,\nu),
\]
the curvature residual in \autoref{cor:uniform} is controlled by the average
coordinate defect,
\[
\frac1{\sqrt m}\sqrt{\frac{12\beta}{\alpha}}
=
\sqrt{\frac{12}{\alpha m}\sum_{i=1}^m\beta_i}.
\]
Hence an $O(\sqrt m)$ bound in the unnormalized product Wasserstein distance
reflects the usual product scaling when each coordinate contributes $O(1)$;
it need not represent a deterioration of the per-coordinate error.  The coordinatewise formulation is sharper when only a sparse collection of
coordinates or interactions contributes appreciably to the defect. Strong
cross-coordinate interactions, however, may make the individual coordinate
defects large.
\end{remark}

Bounded-gradient perturbations of strongly convex potentials provide one class
with finite defect. Such perturbations may remain nonconvex at
arbitrarily large distances and therefore need not satisfy any
convexity at infinity condition.

\begin{proposition}
\label{prop:bounded-perturbation}
Let
\[
V=V_0+U,
\]
where $V_0\in C^2(\R^m)$, $U\in C^1(\R^m)$, and suppose that
\[
\Hess V_0(x)\geq\alpha_0I_m,
\qquad x\in\R^m,
\qquad
\sup_{x\in\R^m}\norm{\grad U(x)}\leq G
\]
for some $\alpha_0>0$ and $G\geq0$. Then, for every
$\alpha\in(0,\alpha_0)$, the potential $V$ has uniform
$(\alpha,\beta)$-defect with
\begin{equation*}
\beta
=
\frac{G^2}{\alpha_0-\alpha}.
\end{equation*}
\end{proposition}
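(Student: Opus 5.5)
The plan is to verify the equivalent monotonicity form \eqref{eq:defective-monotonicity} directly, by splitting the gradient increment into its $V_0$ and $U$ parts. Fix $x,y\in\R^m$ and set $r=\norm{x-y}$.

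\textbf{The $V_0$ part.} Since $\Hess V_0\geq\alpha_0 I_m$ everywhere, apply the fundamental theorem of calculus along the segment $y+t(x-y)$, $t\in[0,1]$. This gives
\[
\ip{\grad V_0(x)-\grad V_0(y)}{x-y}
=\int_0^1 (x-y)^{\top}\Hess V_0\bigl(y+t(x-y)\bigr)(x-y)\dd t
\geq\alpha_0 r^2 .
\]

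\textbf{The $U$ part.} Here we only have $U\in C^1$ with bounded gradient, so no second derivatives are needed. The triangle inequality and Cauchy--Schwarz give
\[
\ip{\grad U(x)-\grad U(y)}{x-y}\geq -\norm{\grad U(x)-\grad U(y)}\,r\geq -2Gr .
\]
Combining the two parts,
\[
\ip{\grad V(x)-\grad V(y)}{x-y}\geq \alpha_0 r^2-2Gr .
\]

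\textbf{Optimizing the gap.} It remains to show
\[
\alpha_0 r^2-2Gr\geq \alpha r^2-\frac{G^2}{\alpha_0-\alpha},
\]
that is, $(\alpha_0-\alpha)r^2-2Gr+G^2/(\alpha_0-\alpha)\geq0$. This is the perfect square
\[
\Bigl(\sqrt{\alpha_0-\alpha}\,r-\tfrac{G}{\sqrt{\alpha_0-\alpha}}\Bigr)^2\geq 0 .
\]
Equivalently, apply Young's inequality $2Gr\leq(\alpha_0-\alpha)r^2+G^2/(\alpha_0-\alpha)$. This yields \eqref{eq:defective-monotonicity} with $\beta=G^2/(\alpha_0-\alpha)$, and hence $\defect_\alpha(x,y)\leq\beta$ for all $x,y$.

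There is no real obstacle. The only point needing care is the constant: bounding the $U$-increment by $2G$ rather than $G$ leads to exactly $\beta=G^2/(\alpha_0-\alpha)$, which matches the statement. A cruder splitting of the quadratic would lose a factor.
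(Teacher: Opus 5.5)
Your proof is correct and follows the paper's argument essentially line for line: strong convexity of $V_0$ gives $\alpha_0 r^2$, the gradient bound on $U$ gives $-2Gr$, and Young's inequality with parameter $\alpha_0-\alpha$ yields $\beta=G^2/(\alpha_0-\alpha)$. The only difference is that you justify the $V_0$ bound explicitly by integrating the Hessian along the segment, which the paper leaves implicit.
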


\begin{proof}
Fix $x,y\in\R^m$ and set $r=\norm{x-y}$. Strong convexity of $V_0$
gives
\[
\ip{\grad V_0(x)-\grad V_0(y)}{x-y}
\geq
\alpha_0r^2.
\]
On the other hand,
\[
\begin{aligned}
\ip{\grad U(x)-\grad U(y)}{x-y}
&\geq
-\norm{\grad U(x)-\grad U(y)}\,r\\
&\geq
-2Gr.
\end{aligned}
\]
Consequently,
\[
\ip{\grad V(x)-\grad V(y)}{x-y}
\geq
\alpha_0r^2-2Gr.
\]
Since
\[
2Gr
\leq
(\alpha_0-\alpha)r^2
+
\frac{G^2}{\alpha_0-\alpha},
\]
we obtain
\[
\ip{\grad V(x)-\grad V(y)}{x-y}
\geq
\alpha r^2
-
\frac{G^2}{\alpha_0-\alpha}.
\]
Hence \eqref{eq:uniform-defect} holds.
\end{proof}

\begin{remark}
\label{rem:alpha-choice}
The choice of $\alpha$ balances the contractive part of the estimate against
the additive defect. In particular,
\[
\frac{\beta}{\alpha}
=
\frac{G^2}{\alpha(\alpha_0-\alpha)}
\]
is minimized at $\alpha=\alpha_0/2$. For this choice,
\[
\beta=\frac{2G^2}{\alpha_0},
\qquad
\frac{\beta}{\alpha}
=
\frac{4G^2}{\alpha_0^2}.
\]
This choice minimizes the leading residual $\sqrt{\beta/\alpha}$. Other
values of $\alpha$ may nevertheless give a better balance with the
contraction rate and the remaining error terms.
\end{remark}

A second class consists of potentials that are strongly convex outside a
bounded region but may have negative curvature inside it.
Localized nonconvexity of this type still yields a finite uniform defect.

\begin{proposition}
\label{prop:outside}
Suppose that $V\in C^2(\R^m)$ and that, for some
$\alpha_0>0$, $M\geq0$, and $R>0$,
\[
\Hess V(x)\geq\alpha_0I_m
\quad\text{ whenever }\norm{x}\geq R,
\qquad
\Hess V(x)\geq-MI_m
\quad\text{ for every }x\in\R^m.
\]
Then, for every $\alpha\in(0,\alpha_0)$, the potential $V$ has uniform
$(\alpha,\beta)$-defect with
\begin{equation}
\label{eq:beta-outside-ball}
\beta
=
\frac{(\alpha_0+M)^2R^2}{\alpha_0-\alpha}.
\end{equation}
In particular, choosing $\alpha=\alpha_0/2$ gives
\[
\beta
=
\frac{2(\alpha_0+M)^2R^2}{\alpha_0}.
\]
\end{proposition}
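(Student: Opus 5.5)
The plan is to follow the template of \autoref{prop:bounded-perturbation}: obtain a lower bound of the form $\alpha_0 r^2 - C r$ for the monotonicity pairing, with $r=\norm{x-y}$, and then absorb the linear term by Young's inequality. The difference from that proposition is that the linear term now comes from the time a straight segment spends inside the ball $B_R=\{\norm{z}<R\}$, not from a bounded gradient perturbation.

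First I fix $x\neq y$ (the case $x=y$ is trivial) and write $v=x-y$, $r=\norm{v}$ and $z_t=y+tv$ for $t\in[0,1]$. Since $V\in C^2$, the fundamental theorem of calculus gives
\[
\ip{\grad V(x)-\grad V(y)}{x-y}
=
\int_0^1 v^{\top}\Hess V(z_t)\,v\,\dd t .
\]
I split $[0,1]$ into $I=\{t:\norm{z_t}<R\}$ and its complement. On the complement the integrand is at least $\alpha_0 r^2$. On $I$ it is at least $-Mr^2$, which I rewrite as $\alpha_0 r^2-(\alpha_0+M)r^2$.

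The key geometric step bounds the size of $I$. Because $B_R$ is convex, $I$ is an interval. The map $t\mapsto z_t$ moves at constant speed $r$, and any segment inside $B_R$ has length at most $2R$. Hence $|I|\leq \min\{1,2R/r\}\leq 2R/r$. Combining the two regions gives
\[
\ip{\grad V(x)-\grad V(y)}{x-y}
\geq
\alpha_0 r^2-(\alpha_0+M)r^2|I|
\geq
\alpha_0 r^2-2(\alpha_0+M)Rr .
\]

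Finally, exactly as in \autoref{prop:bounded-perturbation}, I use
\[
2(\alpha_0+M)Rr\leq(\alpha_0-\alpha)r^2+\frac{(\alpha_0+M)^2R^2}{\alpha_0-\alpha}.
\]
This yields \eqref{eq:defective-monotonicity} with $\beta$ as in \eqref{eq:beta-outside-ball}. Substituting $\alpha=\alpha_0/2$ gives the stated special case.

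The only step needing care is the length bound on $I$. The naive bound $|I|\leq 1$ would give a defect term of order $r^2$, which cannot be absorbed. The convexity of the ball, giving chord length at most $2R$, is what converts the bad region into a term linear in $r$.
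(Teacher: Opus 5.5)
Your proposal is correct and follows the paper's proof essentially step for step: the fundamental theorem of calculus along the segment $z_t=y+t(x-y)$, splitting $[0,1]$ according to whether $z_t$ lies in the ball, the chord-length bound $|S|\,\|x-y\|\leq 2R$, and Young's inequality with $C=(\alpha_0+M)R$. Your observation that the naive bound $|S|\leq1$ would leave an unabsorbable $r^2$ term correctly identifies why the chord-length bound is the essential step.
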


\begin{proof}
For $x=y$ there is nothing to prove.  Assume $x\neq y$ and set
\[
z_t=y+t(x-y),
\qquad
S=\big\{t\in[0,1]:\norm{z_t}<R\big\}.
\]
Because the intersection of a line with the ball $B_R(0)$ has length at
most $2R$, and the curve $t\mapsto z_t$ has constant speed
$\norm{x-y}$, we have
\begin{equation}
\label{eq:S-length}
|S|\,\norm{x-y}\leq 2R.
\end{equation}
The fundamental theorem of calculus gives
\[
\begin{aligned}
\ip{\grad V(x)-\grad V(y)}{x-y}
&=
\int_0^1
\ip{\Hess V(z_t)(x-y)}{x-y}\,\dd t\\
&\geq
\alpha_0(1-|S|)\norm{x-y}^2
-
M|S|\norm{x-y}^2\\
&=
\alpha_0\norm{x-y}^2
-
(\alpha_0+M)|S|\norm{x-y}^2.
\end{aligned}
\]
Using \eqref{eq:S-length}, we obtain
\[
\ip{\grad V(x)-\grad V(y)}{x-y}
\geq
\alpha_0\norm{x-y}^2
-
2(\alpha_0+M)R\norm{x-y}.
\]
Let
\[
C=(\alpha_0+M)R.
\]
Young's inequality yields
\[
2C\norm{x-y}
\leq
(\alpha_0-\alpha)\norm{x-y}^2
+
\frac{C^2}{\alpha_0-\alpha}.
\]
Therefore,
\[
\ip{\grad V(x)-\grad V(y)}{x-y}
\geq
\alpha\norm{x-y}^2
-
\frac{(\alpha_0+M)^2R^2}{\alpha_0-\alpha},
\]
which is \eqref{eq:beta-outside-ball}.
\end{proof}

The two criteria cover different nonconvex regimes. For example,
\[
V_0(x)=\frac{\alpha_0}{2}\norm{x}^2,
\qquad
U(x)=a\sum_{i=1}^m\cos x_i,
\]
satisfies the assumptions of \autoref{prop:bounded-perturbation}, with
$G\leq a\sqrt m$. If $a>\alpha_0$, the resulting potential is nonconvex
at points of arbitrarily large norm and hence is not convex outside any
compact set. By contrast, \autoref{prop:outside} permits arbitrary bounded
negative curvature on a fixed ball, provided strong convexity is recovered
outside that ball. Bounded-gradient perturbations and localized nonconvexity therefore give two
distinct mechanisms for obtaining a finite curvature defect.

A symmetric Gaussian mixture provides a familiar statistical example.

\begin{remark}
\label{rem:gaussian-mixture}
The class also contains a standard smooth multimodal model.  If
$a\in\R^m$ and
\[
p_a(x)=\frac12\varphi_m(x-a)+\frac12\varphi_m(x+a),
\]
where $\varphi_m$ is the standard Gaussian density, then, up to an additive
constant,
\[
V_a(x)=\frac12\|x\|^2-\log\cosh(a^{\mathsf T}x).
\]
Writing $V_a=\|x\|^2/2+U_a$, one has
$\|\nabla U_a(x)\|\leq\|a\|$ and
$\|\nabla^2V_a(x)\|_{\mathrm{op}}\leq1+\|a\|^2$.  Hence, for every
$\alpha\in(0,1)$, \autoref{prop:bounded-perturbation} gives the uniform
defect bound
\[
\beta=\frac{\|a\|^2}{1-\alpha}.
\]
Thus the assumptions include standard non-log-concave mixture targets. The
numerical section instead uses a benchmark with an explicit MFVI minimizer so
that optimization error in the reference solution does not obscure the
particle error.
\end{remark}

A finite uniform defect does not by itself imply uniqueness, as the following
example shows.

\begin{proposition}
\label{prop:multiple-minimizers}
Let $m=2$, let $\gamma$ denote the standard Gaussian law on $\R$, and set
\[
V_J(x_1,x_2)
=
\frac12(x_1^2+x_2^2)-J\tanh(x_1)\tanh(x_2),
\qquad J>0.
\]
Define
\[
\sigma^2=\int_\R \tanh^2(x)\,\gamma(\dd x)>0.
\]
For every $\alpha\in(0,1)$, $V_J$ is globally smooth and has uniform
$(\alpha,\beta)$-defect with
\[
\beta=\frac{2J^2}{1-\alpha}.
\]
If $J>\sigma^{-2}$, the associated MFVI problem has at least two distinct
global minimizers.
\end{proposition}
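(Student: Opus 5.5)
The plan has two independent parts: the defect and smoothness claims, which follow from \autoref{prop:bounded-perturbation}, and the existence of two distinct minimizers, which follows from a direct variational argument built on the symmetry $x\mapsto -x$.

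\emph{Smoothness and defect.} Write $V_J=V_0+U$ with $V_0(x)=\tfrac12\norm{x}^2$, so $\alpha_0=1$, and $U(x)=-J\tanh(x_1)\tanh(x_2)$. Smoothness is immediate. The Hessian of $U$ has entries built from $\tanh$, $\operatorname{sech}^2$ and $\tanh\operatorname{sech}^2$, which are all bounded, so $\norm{\Hess V_J}_{\mathrm{op}}$ is bounded uniformly; this gives \autoref{ass:smooth}. The gradient is
\[
\grad U=-J\bigl(\operatorname{sech}^2(x_1)\tanh(x_2),\ \tanh(x_1)\operatorname{sech}^2(x_2)\bigr).
\]
Each component has absolute value at most $J$, so $\norm{\grad U}\le\sqrt2\,J=:G$. \autoref{prop:bounded-perturbation} with $\alpha_0=1$ then gives $\beta=G^2/(1-\alpha)=2J^2/(1-\alpha)$ for every $\alpha\in(0,1)$.

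\emph{Reduction of the objective.} For $q=q^1\otimes q^2$ with finite second moments, expand $\KL(q\|p)=\int V_J\,\dd q+\int q\log q+\log Z$. Grouping the Gaussian part with the entropy gives
\[
F(q):=\KL(q\|p)=\KL(q^1\|\gamma)+\KL(q^2\|\gamma)-J\,\mu_1\mu_2+c,
\qquad \mu_i=\int\tanh\,\dd q^i,
\]
where $c$ is a constant. Two consequences follow.

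1. Since $|\mu_i|\le1$, we have $F\ge c-J$. Any $q$ with $\mu_1\mu_2\le0$ satisfies $F(q)\ge c$.
2. The value $c$ is not the infimum. Take $q_\varepsilon^i=(1+\varepsilon\tanh)\gamma$ for small $\varepsilon$; this is a valid density because $\tanh$ is odd and bounded. Then $\KL(q^i_\varepsilon\|\gamma)=\tfrac12\varepsilon^2\sigma^2+O(\varepsilon^3)$ (expand $(1+u)\log(1+u)$) and $\mu_i=\varepsilon\sigma^2$. Hence
\[
F(q_\varepsilon)-c=\varepsilon^2\sigma^2\bigl(1-J\sigma^2\bigr)+O(\varepsilon^3)<0
\]
when $J>\sigma^{-2}$. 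Therefore every minimizer satisfies $\mu_1\mu_2>0$, and in particular $\mu_1\neq0$.

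\emph{Existence.} Take a minimizing sequence. By item 1, both $\KL(q^i_n\|\gamma)$ stay bounded, so each marginal sequence is tight and has weakly convergent subsequences. Weak limits of product measures are products. Relative entropy with respect to $\gamma$ is weakly lower semicontinuous, and finite entropy forces absolute continuity and finite second moments. Because $\tanh$ is bounded and continuous, $\mu_i$ is weakly continuous. Together these show the limit is a minimizer $q^\star\in\cQ$. Alternatively, one could cite the existence result established later under uniform defect, but the direct argument is self-contained and avoids any forward reference.

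\emph{Second minimizer.} Let $R(x)=-x$, which preserves $V_J$. The pushforward $R_\#q^\star=(R_\#q^{\star,1})\otimes(R_\#q^{\star,2})$ is still a product, and it has the same entropy terms, since $\gamma$ is symmetric. Its tanh-means are $-\mu_1,-\mu_2$, so $\mu_1\mu_2$ and hence $F$ are unchanged, and $R_\#q^\star$ is also a minimizer. It differs from $q^\star$ because its first tanh-mean is $-\mu_1\neq\mu_1$.

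The main obstacle is the second-order expansion in item 2: it must produce exactly the threshold $J\sigma^2>1$ and show that the remainder is $O(\varepsilon^3)$. This is routine because $\tanh$ is bounded, so the perturbation $1+\varepsilon\tanh$ stays uniformly close to $1$.
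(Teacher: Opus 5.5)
Your proof is correct and follows the paper's argument. The common steps are: the defect bound via \autoref{prop:bounded-perturbation} with $G=\sqrt2\,J$; the reduction of the objective to $\KL(q^1\|\gamma)+\KL(q^2\|\gamma)-J\mu_1\mu_2$; a second-order perturbation of $\gamma\otimes\gamma$ giving $\sigma^2(1-J\sigma^2)\varepsilon^2$; and the reflection $x\mapsto -x$.

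The differences are cosmetic. You perturb by the mixture $(1+\varepsilon\tanh)\gamma$ rather than the paper's exponential tilt $e^{t\tanh x-\psi(t)}\gamma$. You prove existence directly instead of citing \autoref{thm:existence}. You argue distinctness through $\mu_1\mu_2>0$, whereas the paper notes that a reflection-invariant product law has $r_1=r_2=0$ and therefore a nonnegative objective gap.
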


\begin{proof}
Write $V_J=V_0+U$ with $V_0(x)=\|x\|^2/2$.  Since
\[
\nabla U(x_1,x_2)
=
-J\bigl(\operatorname{sech}^2(x_1)\tanh(x_2),
\tanh(x_1)\operatorname{sech}^2(x_2)\bigr),
\]
we have $\sup_x\|\nabla U(x)\|\leq\sqrt2J$.  The defect bound follows from \autoref{prop:bounded-perturbation}.  Since
$\tanh$ and its derivatives are bounded, the Hessian of $V_J$ is globally
bounded as well.
Existence of an MFVI minimizer follows from \autoref{thm:existence}.

For a product law $q=q^1\otimes q^2$, set
$r_i=\int\tanh x\,q^i(\dd x)$.  Up to an additive constant independent of
$q$, the MFVI objective is
\begin{equation}
\label{eq:multiple-minimizer-objective}
\KL(q^1\|\gamma)+\KL(q^2\|\gamma)-Jr_1r_2.
\end{equation}
Let
\[
\psi(t)=\log\int_\R e^{t\tanh x}\,\gamma(\dd x),
\qquad
q_t(\dd x)=e^{t\tanh x-\psi(t)}\gamma(\dd x).
\]
By symmetry, $\psi'(0)=0$ and $\psi''(0)=\sigma^2$.  Hence
\[
\psi(t)=\frac{\sigma^2}{2}t^2+o(t^2),
\qquad
\psi'(t)=\sigma^2t+o(t).
\]
Moreover,
\[
\KL(q_t\|\gamma)=t\psi'(t)-\psi(t),
\qquad
\int\tanh x\,q_t(\dd x)=\psi'(t).
\]
Evaluating \eqref{eq:multiple-minimizer-objective} at
$q_t\otimes q_t$ gives, relative to $\gamma\otimes\gamma$,
\[
2\bigl(t\psi'(t)-\psi(t)\bigr)-J\psi'(t)^2
=
\sigma^2(1-J\sigma^2)t^2+o(t^2).
\]
If $J>\sigma^{-2}$, this is negative for all sufficiently small nonzero
$t$.  Thus $\gamma\otimes\gamma$ is not a global minimizer and the minimum
value is strictly negative.

Finally, the objective is invariant under simultaneous reflection
$q^i\mapsto(-\mathrm{id})_\#q^i$.  Any reflection-invariant product law has
$r_1=r_2=0$ and therefore has nonnegative objective gap, with equality only
at $\gamma\otimes\gamma$.  Hence a global minimizer of the strictly negative
minimum cannot be reflection invariant; reflecting it produces a second,
distinct global minimizer.
\end{proof}

\smallskip
Thus global smoothness controls the discretization, whereas the curvature
defect controls the loss of contractivity.  The next section establishes the
variational and moment estimates used in the particle analysis.

%%%
\section{Variational foundations and stationary moments}
\label{sec:variational-foundations}

Throughout this section we impose \autoref{ass:smooth} and the uniform defect
condition \eqref{eq:uniform-defect}.  The defect yields quadratic confinement,
normalization of the target, and compactness of minimizing sequences.  We then
derive the coordinatewise Gibbs identities and the stationary moment and
empirical-measure estimates used in the continuous and particle analyses.

The defect assumption first yields the coercivity needed for the direct
method.

\begin{lemma}
\label{lem:coercivity}
Assume that $V$ has uniform $(\alpha,\beta)$-defect. Then there exists
$C_V<\infty$ such that
\begin{equation}
\label{eq:coercivity}
V(x)\geq \frac{\alpha}{8}\norm{x}^2-C_V,
\qquad x\in\R^m.
\end{equation}
Consequently,
\[
Z=\int_{\R^m}e^{-V(x)}\,\dd x<\infty,
\]
and, for every $\eta\in(0,\alpha/8)$,
\begin{equation}
\label{eq:exp-moment-p}
\int_{\R^m}e^{\eta\norm{x}^2}\,p(\dd x)<\infty.
\end{equation}
\end{lemma}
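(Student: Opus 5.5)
The plan is to integrate the defective monotonicity inequality \eqref{eq:defective-monotonicity} along rays from the origin, so that $V$ acquires a quadratic lower bound modulo an additive loss that grows only linearly. Fix $x\neq0$ and write
\[
V(x)-V(0)=\int_0^1\ip{\grad V(tx)}{x}\,\dd t .
\]
For $t>0$, apply \eqref{eq:defective-monotonicity} to the pair $(tx,0)$ and divide by $t$:
\[
\ip{\grad V(tx)-\grad V(0)}{x}\geq \alpha t\norm{x}^2-\frac{\beta}{t}.
\]
The $1/t$ singularity is not integrable at $0$, so for small $t$ I will switch to the Lipschitz bound from \autoref{ass:smooth}. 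Since $\norm{\grad V(tx)-\grad V(0)}\leq Lt\norm{x}$,
\[
\ip{\grad V(tx)-\grad V(0)}{x}\geq -Lt\norm{x}^2 .
\]

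Choose a threshold $t_0\in(0,1]$. On $[0,t_0]$ use the Lipschitz bound, and on $[t_0,1]$ use the defect bound. Together with the term $\ip{\grad V(0)}{x}\geq-\norm{\grad V(0)}\norm{x}$ this gives
\[
V(x)-V(0)\geq -\frac{L t_0^2}{2}\norm{x}^2+\frac{\alpha}{2}(1-t_0^2)\norm{x}^2-\beta\log(1/t_0)-\norm{\grad V(0)}\norm{x}.
\]
The threshold must handle two competing effects. A fixed $t_0$ leaves a curvature penalty $Lt_0^2$ that may exceed $\alpha$. The $\log$ term, however, only needs to be absorbed at large $\norm{x}$. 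So I will pick a fixed $t_0$ with $Lt_0^2\leq\alpha/4$, for instance $t_0=\min\{1,\tfrac12\sqrt{\alpha/L}\}$. The quadratic coefficient then becomes at least $\tfrac{\alpha}{2}-\tfrac{\alpha}{8}-\tfrac{\alpha}{8}=\tfrac{\alpha}{4}$, and the $\log$ loss $\beta\log(1/t_0)$ is a constant.

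Next, Young's inequality gives $\norm{\grad V(0)}\norm{x}\leq \tfrac{\alpha}{8}\norm{x}^2+\tfrac{2}{\alpha}\norm{\grad V(0)}^2$. This leaves
\[
V(x)\geq\frac{\alpha}{8}\norm{x}^2-C_V,
\]
with $C_V=-V(0)+\beta\log(1/t_0)+2\norm{\grad V(0)}^2/\alpha$, which is \eqref{eq:coercivity}. An alternative route avoids the $\log$ entirely: use \eqref{eq:defective-monotonicity} with $(x,0)$ at the endpoint together with convexity-type comparisons. The ray integral, however, is the cleanest, and the main (minor) obstacle is exactly this non-integrability at $t=0$, resolved by the Lipschitz cutoff.

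The consequences then follow directly. $Z\leq e^{C_V}\int e^{-\alpha\norm{x}^2/8}\dd x<\infty$, and $Z>0$ because $V$ is continuous, hence locally bounded. For $\eta<\alpha/8$,
\[
\int e^{\eta\norm{x}^2}p(\dd x)\leq Z^{-1}e^{C_V}\int e^{-(\alpha/8-\eta)\norm{x}^2}\dd x<\infty,
\]
which is \eqref{eq:exp-moment-p}.
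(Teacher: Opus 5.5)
Your proof is correct, but it treats the region near the origin differently from the paper.

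\textbf{The paper's route.} It also integrates the defect inequality along rays from the origin. It splits each ray at a fixed absolute radius $R=\max\{1,\sqrt{2\beta/\alpha},4\norm{\grad V(0)}/\alpha\}$. Beyond $R$ the radial derivative satisfies $\frac{\dd}{\dd r}V(ru)\geq\alpha r/4$. Inside the ball it uses only continuity of $V$, so $C_V$ involves the non-explicit quantities $\min_{\norm{x}\leq R}V$ and $\min_{\norm{u}=1}V(Ru)$.

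\textbf{Your route.} You cut at the proportional point $t_0\norm{x}$. On $[0,t_0]$ you pay a quadratic penalty $Lt_0^2\norm{x}^2/2$ through the global Lipschitz bound, and you turn the $\beta/t$ singularity into the constant $\beta\log(1/t_0)$.

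\textbf{Comparison.} Your approach gives a fully explicit constant,
$C_V=-V(0)+\beta\log\bigl(2\sqrt{L/\alpha}\bigr)+2\norm{\grad V(0)}^2/\alpha$.
The price is reliance on \autoref{ass:smooth}, whereas the paper's argument needs only $V\in C^1$ together with the defect condition. Nothing is lost in context: $L$-smoothness is in force throughout \autoref{sec:variational-foundations}, and $|f''|\leq L$ is also available when the argument is reused for $\barV_i(\cdot,\mu)$ in \autoref{lem:conditional}.

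\textbf{A small point.} Your bound $\alpha t_0^2/2\leq\alpha/8$ needs $t_0\leq 1/2$. This holds because a finite uniform defect under $L$-smoothness forces $\alpha\leq L$. Consequently the $\min\{1,\cdot\}$ in your choice of $t_0$ is never active. It would be cleaner to cite $\alpha\leq L$ explicitly, or to take $t_0=\tfrac12\min\{1,\sqrt{\alpha/L}\}$.
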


\begin{proof}
Let
\[
G_0=\norm{\grad V(0)}.
\]
For $u\in\mathbb S^{m-1}$ and $r>0$, the defect inequality applied to
$ru$ and $0$ gives
\[
\ip{\grad V(ru)-\grad V(0)}{ru}
\geq
\alpha r^2-\beta.
\]
Dividing by $r$ and using
$\ip{\grad V(0)}{u}\geq-G_0$, we obtain
\[
\frac{\dd}{\dd r}V(ru)
=
\ip{\grad V(ru)}{u}
\geq
\alpha r-\frac{\beta}{r}-G_0.
\]
Set
\[
R
:=
\max\left\{
1,\sqrt{\frac{2\beta}{\alpha}},
\frac{4G_0}{\alpha}
\right\}.
\]
For $r\geq R$,
\[
\frac{\beta}{r}\leq\frac{\alpha r}{2},
\qquad
G_0\leq\frac{\alpha r}{4},
\]
and hence
\[
\frac{\dd}{\dd r}V(ru)\geq\frac{\alpha r}{4}.
\]
Integrating from $R$ to $r\geq R$ yields
\[
V(ru)
\geq
V(Ru)+\frac{\alpha}{8}(r^2-R^2).
\]
Define
\[
m_R:=\min_{\norm{x}\leq R}V(x),
\qquad
s_R:=\min_{\norm{u}=1}V(Ru).
\]
Both constants are finite by continuity. If $\norm{x}\geq R$, the preceding
estimate gives
\[
V(x)
\geq
\frac{\alpha}{8}\norm{x}^2
+s_R-\frac{\alpha R^2}{8},
\]
whereas, if $\norm{x}\leq R$,
\[
V(x)
\geq
m_R
\geq
\frac{\alpha}{8}\norm{x}^2
-
\left(\frac{\alpha R^2}{8}-m_R\right).
\]
Thus \eqref{eq:coercivity} holds, for example, with
\[
C_V
=
\max\left\{
0,\frac{\alpha R^2}{8}-m_R,
\frac{\alpha R^2}{8}-s_R
\right\}.
\]
The estimate \eqref{eq:coercivity} implies
\[
e^{-V(x)}
\leq
e^{C_V}e^{-\alpha\norm{x}^2/8},
\]
so $Z<\infty$. Similarly, for every $\eta<\alpha/8$,
\[
e^{\eta\norm{x}^2-V(x)}
\leq
e^{C_V}
e^{-(\alpha/8-\eta)\norm{x}^2},
\]
which gives \eqref{eq:exp-moment-p}.
\end{proof}
The direct method for \eqref{eq:mfvi} also requires that independence be
preserved under weak convergence. The following elementary closure property
will be used repeatedly.

\begin{lemma}
\label{lem:product-closed}
Let
\[
q_n=q_n^1\otimes\cdots\otimes q_n^m\in\mathcal P(\R^m),
\qquad
q_n\Rightarrow q.
\]
Then
\[
q_n^i\Rightarrow q^i,
\qquad i\in[m],
\]
where $q^i$ is the $i$-th marginal of $q$, and
\[
q=q^1\otimes\cdots\otimes q^m.
\]
In particular, the family of product probability measures is weakly closed.
\end{lemma}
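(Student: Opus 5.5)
The plan is to work with the coordinate projections $\pi_i:\R^m\to\R$, $\pi_i(x)=x_i$, and with bounded continuous test functions of product form.

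\textbf{Marginals.} Since $\pi_i$ is continuous, the continuous mapping theorem applied to $q_n\Rightarrow q$ gives
\[
q_n^i=(\pi_i)_\#q_n\Rightarrow(\pi_i)_\#q=q^i .
\]
Here we use that the $i$-th marginal of the product $q_n^1\otimes\cdots\otimes q_n^m$ is exactly $q_n^i$.

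\textbf{Identification of the limit.} I would compare $q$ and $q^1\otimes\cdots\otimes q^m$ on functions
\[
f(x)=\prod_{i=1}^m f_i(x_i),\qquad f_i\in C_b(\R).
\]
Such $f$ is bounded and continuous on $\R^m$, so
\[
\int f\,\dd q=\lim_n\int f\,\dd q_n=\lim_n\prod_{i=1}^m\int f_i\,\dd q_n^i=\prod_{i=1}^m\int f_i\,\dd q^i .
\]
The middle equality is Fubini for the product $q_n$, and the last uses the marginal convergence just established. Hence $q$ and $\bigotimes_i q^i$ agree on all such product test functions.

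\textbf{Concluding equality of measures.} Take $f_i$ to be bounded continuous approximations of indicators of closed intervals, for instance piecewise linear functions decreasing to $\mathbf 1_{[a_i,b_i]}$. Monotone (dominated) convergence then shows that $q$ and $\bigotimes_i q^i$ agree on all closed boxes. Closed boxes form a $\pi$-system generating the Borel $\sigma$-algebra of $\R^m$, so Dynkin's $\pi$--$\lambda$ theorem gives $q=\bigotimes_i q^i$.

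Weak closedness of the product family follows immediately from this identification. If one wants closedness of $\cQ$ itself, absolute continuity of the factors is a separate issue that this lemma does not address.

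This step is the only real point requiring care; everything else is routine. An alternative route is characteristic functions: product-form characters $e^{i\langle\xi,x\rangle}=\prod_k e^{i\xi_kx_k}$ are bounded and continuous, and they determine the measure.
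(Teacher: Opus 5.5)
Your proof is correct and follows the paper's argument. You obtain the marginal convergence from the continuous mapping theorem, then test $q$ against product-form bounded continuous functions and pass to the limit factor by factor. The paper uses exactly the characteristic-function variant you list as an alternative, combining product characters with uniqueness of characteristic functions. Your main route replaces that uniqueness theorem with the closed-box approximation and the $\pi$--$\lambda$ theorem, which is equally valid and slightly more self-contained.
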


\begin{proof}
The convergence of the marginals follows from the continuous mapping theorem
applied to the coordinate projections. Let $\widehat q_n$, $\widehat q$,
and $\widehat q^{\,i}$ denote the corresponding characteristic functions.
For every $t=(t_1,\ldots,t_m)\in\R^m$,
\[
\widehat q(t)
=
\lim_{n\to\infty}\widehat q_n(t)
=
\lim_{n\to\infty}
\prod_{i=1}^m\widehat q_n^{\,i}(t_i)
=
\prod_{i=1}^m\widehat q^{\,i}(t_i).
\]
The last expression is the characteristic function of
$q^1\otimes\cdots\otimes q^m$. Uniqueness of characteristic functions
therefore gives the conclusion.
\end{proof}

We next prove existence of an MFVI minimizer and the entropy properties needed
for coordinatewise variations.

\begin{theorem}
\label{thm:existence}
Under \autoref{ass:smooth} and the uniform defect condition
\eqref{eq:uniform-defect}, problem \eqref{eq:mfvi} admits a minimizer
$q^\star\in\cQ\cap\cP_2(\R^m)$.
\end{theorem}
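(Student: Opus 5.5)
We use the direct method of the calculus of variations, applied to the decomposition
\[
\KL(q\|p)=\int q\log q+\int V\,\dd q+\log Z,\qquad q\in\cQ .
\]
By \autoref{lem:coercivity}, $Z<\infty$. The infimum is finite because it is bounded above by, for example, the value at the standard Gaussian product law. Since $V$ has at most quadratic growth by \eqref{eq:quadratic-growth-V}, this value is finite.

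\textbf{Step 1: second-moment bound along a minimizing sequence.} Take a minimizing sequence $q_n\in\cQ$ with $\KL(q_n\|p)\le C$. For $\eta\in(0,\alpha/8)$, the Donsker--Varadhan inequality gives
\[
\eta\int\norm{x}^2\,q_n(\dd x)\le \KL(q_n\|p)+\log\int e^{\eta\norm{x}^2}\,p(\dd x),
\]
and the last integral is finite by \eqref{eq:exp-moment-p}. Hence $\sup_n\int\norm{x}^2\dd q_n<\infty$. By Chebyshev's inequality the sequence is tight, so Prokhorov's theorem yields a weakly convergent subsequence $q_n\Rightarrow q^\star$.

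\textbf{Step 2: properties of the limit.} Lower semicontinuity of the second moment under weak convergence gives $q^\star\in\cP_2(\R^m)$. By \autoref{lem:product-closed}, $q^\star$ is a product measure $q^{\star,1}\otimes\cdots\otimes q^{\star,m}$.

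\textbf{Step 3: lower semicontinuity of the objective.} Relative entropy $\KL(\cdot\|p)$ is jointly weakly lower semicontinuous, for instance via its Donsker--Varadhan variational representation as a supremum of weakly continuous functionals over bounded continuous test functions. Therefore
\[
\KL(q^\star\|p)\le\liminf_n\KL(q_n\|p)=\inf_{\cQ}\KL(\cdot\|p)<\infty.
\]
Finiteness forces $q^\star\ll p$, and $p$ has a Lebesgue density, so $q^\star$ is absolutely continuous on $\R^m$. This property must be pushed down to the marginals: if $A\subset\R$ is Lebesgue-null, then $A\times\R^{m-1}$ is Lebesgue-null in $\R^m$, so $q^{\star,i}(A)=q^\star(A\times\R^{m-1})=0$. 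Each marginal $q^{\star,i}$ is therefore in $\cP_{\mathrm{ac}}(\R)$, hence $q^\star\in\cQ$, and it attains the infimum.

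\textbf{Main obstacle.} The delicate point is Step 1. Without convexity of $V$ one cannot split $\int V\,\dd q_n$ and the entropy separately, because the entropy term may be very negative. The Donsker--Varadhan bound against the Gaussian-type exponential moment of $p$ from \autoref{lem:coercivity} handles this by controlling the total moment directly from the KL bound. Once that moment bound is in place, the remaining steps (lower semicontinuity of relative entropy and closedness of products) are standard.
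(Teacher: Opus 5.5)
Your proposal is correct and follows essentially the same route as the paper. Both arguments run through the same steps:
\begin{enumerate}
\item finiteness of the infimum via a product Gaussian;
\item a uniform second-moment bound from the entropy (Donsker--Varadhan) inequality with $\phi=\eta\norm{x}^2$ and \eqref{eq:exp-moment-p};
\item Prokhorov's theorem;
\item closedness of product measures (\autoref{lem:product-closed});
\item weak lower semicontinuity of $\KL(\cdot\|p)$;
\item pushing absolute continuity down to the marginals.
\end{enumerate}
In the last step you use directly that $A\times\R^{m-1}$ is Lebesgue-null, whereas the paper truncates to $A\times[-R,R]^{m-1}$ and lets $R\to\infty$.

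The paper's proof additionally checks that the marginal entropy integrals of $q^\star$ are finite. That is not part of the stated theorem, but it is needed later in \autoref{lem:coordinate-KL} and \autoref{prop:fixed}.
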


\begin{proof}
Global smoothness gives
\[
\left|
V(x)-V(0)-\ip{\grad V(0)}{x}
\right|
\leq
\frac{L}{2}\norm{x}^2,
\]
and hence
\[
|V(x)|\leq C(1+\norm{x}^2)
\]
for some $C<\infty$. Together with \autoref{lem:coercivity}, this implies that every nondegenerate
product Gaussian has finite relative entropy with respect to $p$, so the
infimum in \eqref{eq:mfvi} is finite.

Let $(q_n)\subset\cQ$ be a minimizing sequence. After discarding finitely
many terms, we may assume that
\[
\KL(q_n\|p)\leq K
\]
for some $K<\infty$. Fix $\eta\in(0,\alpha/8)$. The entropy inequality
\[
\int_{\R^m}\phi\,\dd q
\leq
\KL(q\|p)
+
\log\int_{\R^m}e^\phi\,\dd p
\]
applied with $\phi(x)=\eta\norm{x}^2$, together with
\eqref{eq:exp-moment-p}, yields
\[
\sup_n\int_{\R^m}\norm{x}^2\,q_n(\dd x)<\infty.
\]
Thus $(q_n)$ is tight. By Prokhorov's theorem, after passing to a
subsequence,
\[
q_n\Rightarrow q^\star.
\]
By \autoref{lem:product-closed}, $q^\star$ is a product probability
measure. Moreover, relative entropy with respect to the fixed probability
measure $p$ is weakly lower semicontinuous, so
\[
\KL(q^\star\|p)
\leq
\liminf_{n\to\infty}\KL(q_n\|p)
=
\inf_{q\in\cQ}\KL(q\|p)
<\infty.
\]
In particular, $q^\star\ll p$. Since $p$ has a strictly positive
Lebesgue density, $q^\star\ll\mathcal L^m$.

Write
\[
q^\star=q^{\star,1}\otimes\cdots\otimes q^{\star,m}.
\]
If $A\subset\R$ has Lebesgue measure zero, then
$A\times[-R,R]^{m-1}$ has $m$-dimensional Lebesgue measure zero for every
$R>0$. Hence
\[
q^\star\bigl(A\times[-R,R]^{m-1}\bigr)=0.
\]
Letting $R\to\infty$ and using monotone convergence gives
$q^{\star,i}(A)=0$. Thus every marginal is absolutely continuous and
$q^\star\in\cQ$. The uniform second-moment estimate and weak lower
semicontinuity of $x\mapsto\norm{x}^2$ also imply that
$q^\star\in\cP_2(\R^m)$. Hence $q^\star$ is an MFVI minimizer.

We also need finiteness of the marginal entropy integrals. Let
$f^\star$ be the joint density of $q^\star$. For every $c>0$,
\[
t\log t+c\norm{x}^2t
\geq
-e^{-1-c\norm{x}^2},
\qquad t\geq0.
\]
Since $q^\star\in\cP_2(\R^m)$, the negative part of
$f^\star\log f^\star$ is integrable. The entropy integral is therefore
well defined, and
\[
\int_{\R^m}f^\star\log f^\star\,\dd x
=
\KL(q^\star\|p)
-
\int_{\R^m}V\,\dd q^\star
-
\log Z
\]
is finite.

Let $f_i$ be the density of $q^{\star,i}$. Applying the scalar inequality
\[
t\log t+cx^2t\geq-e^{-1-cx^2}
\]
shows that the negative part of $f_i\log f_i$ is integrable. Since
$f^\star(x)=\prod_{i=1}^m f_i(x_i)$, Fubini's theorem gives
\[
\int_{\R^m}f^\star\log f^\star\,\dd x
=
\sum_{i=1}^m\int_{\R}f_i\log f_i\,\dd x.
\]
Each marginal integral has finite negative part, and their sum is finite; hence
each marginal entropy is finite.
\end{proof}

Averaging over all but one coordinate preserves the regularity and defective
monotonicity needed below.

\begin{lemma}
\label{lem:conditional}
For every $i\in[m]$, every $\mu\in\cP_2(\R^{m-1})$, and all
$x,y\in\R$,
\begin{align}
\bigl(\barV_i'(x,\mu)-\barV_i'(y,\mu)\bigr)(x-y)
&\geq
\alpha|x-y|^2-\beta,
\label{eq:conditional-defect}\\
|\barV_i''(x,\mu)|
&\leq L.
\label{eq:conditional-smooth}
\end{align}
Moreover, $f=\barV_i(\,\cdot\,,\mu)$ satisfies
\begin{equation}
\label{eq:conditional-coercivity}
f(x)\geq\frac{\alpha}{8}x^2-C_{i,\mu},
\qquad
|f'(x)|\leq |f'(0)|+L|x|
\end{equation}
for some finite constant $C_{i,\mu}$.
\end{lemma}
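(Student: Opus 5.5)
The plan is to differentiate under the integral sign and then apply the full-dimensional estimates to points that differ only in the $i$-th coordinate.

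\emph{Differentiation.} By \eqref{eq:quadratic-growth-V}, $|V(z)|\leq C(1+\norm{z}^2)$. Since $\grad V$ is $L$-Lipschitz, $|\partial_iV(z)|\leq\norm{\grad V(0)}+L\norm{z}$, and $|\partial_{ii}V|\leq L$. For $x$ in a bounded interval, these bounds are dominated by $\mu$-integrable functions of $z^{-i}$ because $\mu\in\cP_2(\R^{m-1})$. Dominated convergence (mean value theorem on difference quotients) therefore gives
\[
\barV_i'(x,\mu)=\int\partial_iV(z[i\leftarrow x])\,\mu(\dd z^{-i}),\qquad
\barV_i''(x,\mu)=\int\partial_{ii}V(z[i\leftarrow x])\,\mu(\dd z^{-i}).
\]
Continuity of the second expression in $x$ also follows by dominated convergence. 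Since $|\partial_{ii}V|=|e_i^{\mathsf T}\Hess V e_i|\leq\norm{\Hess V}_{\mathrm{op}}\leq L$, we get \eqref{eq:conditional-smooth}.

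\emph{Defect inequality.} For fixed $z^{-i}$, apply \eqref{eq:defective-monotonicity} to $X=z[i\leftarrow x]$ and $Y=z[i\leftarrow y]$. Then $X-Y=(x-y)e_i$, so
\[
\bigl(\partial_iV(X)-\partial_iV(Y)\bigr)(x-y)\geq\alpha|x-y|^2-\beta.
\]
Integrating in $\mu(\dd z^{-i})$ gives \eqref{eq:conditional-defect}, since both sides are integrable and the right-hand side is constant.

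\emph{Coercivity and growth.} The bound $|f'(x)|\leq|f'(0)|+L|x|$ follows from the mean value theorem and \eqref{eq:conditional-smooth}. For the lower bound, I would repeat the proof of \autoref{lem:coercivity} in dimension one for $f$. It uses only the one-dimensional defect inequality \eqref{eq:conditional-defect} with $y=0$, continuity of $f$, and $G_0=|f'(0)|$. This gives $f'(r u)u\geq\alpha r/4$ for $r\geq R$ and $u=\pm1$, and integrating yields $f(x)\geq\frac{\alpha}{8}x^2-C_{i,\mu}$. Alternatively, one could integrate the bound $V(z)\geq\frac{\alpha}{8}\norm{z}^2-C_V$ from \autoref{lem:coercivity} directly. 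This gives $f(x)\geq\frac{\alpha}{8}x^2+\frac{\alpha}{8}\int\norm{z^{-i}}^2\dd\mu-C_V\geq\frac{\alpha}{8}x^2-C_V$, which is even simpler and gives a constant uniform in $\mu$.

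The only step requiring care is the justification of differentiation under the integral. The second-moment hypothesis on $\mu$ supplies exactly the integrable majorants needed there, so I do not expect a real obstacle.
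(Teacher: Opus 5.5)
Your proposal is correct and follows the paper's proof essentially step for step: you justify differentiation under the integral by second-moment majorants, apply the full-dimensional defect to points that differ only in coordinate $i$ and integrate against $\mu$, bound $|\partial_{ii}V|\leq\norm{\Hess V}_{\mathrm{op}}$, and rerun \autoref{lem:coercivity} in one dimension. Your alternative for the lower bound, integrating $V\geq\frac{\alpha}{8}\norm{\cdot}^2-C_V$ directly, is also valid and slightly stronger, since it shows that $C_{i,\mu}$ can be taken equal to $C_V$, independent of $i$ and $\mu$.
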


\begin{proof}
Global smoothness gives
\[
|\partial_iV(z[i\leftarrow x])|
\leq
|\partial_iV(0)|
+
L\bigl(|x|+\norm{z^{-i}}\bigr),
\]
and
\[
|\partial_{ii}^2V(z[i\leftarrow x])|\leq L.
\]
Since $\mu\in\cP_2(\R^{m-1})$, these estimates justify differentiation
under the integral in \eqref{eq:barV}. Thus
\[
\barV_i'(x,\mu)
=
\int_{\R^{m-1}}
\partial_iV(z[i\leftarrow x])\,\mu(\dd z^{-i})
\]
and
\[
\barV_i''(x,\mu)
=
\int_{\R^{m-1}}
\partial_{ii}^2V(z[i\leftarrow x])\,\mu(\dd z^{-i}).
\]

Fix $z^{-i}$. The vectors $z[i\leftarrow x]$ and
$z[i\leftarrow y]$ agree outside the $i$-th coordinate, so the
full-dimensional defect inequality gives
\[
\bigl(
\partial_iV(z[i\leftarrow x])
-
\partial_iV(z[i\leftarrow y])
\bigr)(x-y)
\geq
\alpha|x-y|^2-\beta.
\]
Integration with respect to $\mu$ proves
\eqref{eq:conditional-defect}. The bound \eqref{eq:conditional-smooth} follows from
$|\partial_{ii}^2V|\leq\norm{\Hess V}_{\mathrm{op}}\leq L$.

Applying the one-dimensional version of the proof of
\autoref{lem:coercivity} to $f$, using
\eqref{eq:conditional-defect}, gives the first estimate in
\eqref{eq:conditional-coercivity}. Finally,
\[
|f'(x)-f'(0)|
\leq
\int_0^{|x|}|f''(\operatorname{sgn}(x)s)|\,\dd s
\leq
L|x|,
\]
which gives the derivative bound.
\end{proof}

The conditional estimates make the coordinatewise Gibbs update well defined. For $\mu\in\cP_2(\R^{m-1})$, define
\begin{equation*}
T_i(\mu)(\dd x)
=
\frac{e^{-\barV_i(x,\mu)}}
{\displaystyle\int_{\R}e^{-\barV_i(y,\mu)}\,\dd y}\,\dd x.
\end{equation*}
The denominator is finite and strictly positive by
\eqref{eq:conditional-coercivity}. The same lower bound gives Gaussian tails,
whereas global smoothness gives at most quadratic growth of $\barV_i$.
Consequently,
\[
T_i(\mu)\in\cP_2(\R),
\]
and its entropy integral is finite.

The MFVI objective has an exact coordinatewise decomposition in terms of
$T_i$. Hence the Gibbs update is the unique minimizer of the
objective when all other marginals are held fixed.

\begin{lemma}
\label{lem:coordinate-KL}
Fix
\[
q^{-i}=\bigotimes_{k\neq i}q^k,
\]
where every $q^k\in\cP_{2,\mathrm{ac}}(\R)$ has finite entropy integral,
and let $r\in\cP_{2,\mathrm{ac}}(\R)$ also have finite entropy integral.
Set
\[
q_r
=
q^1\otimes\cdots\otimes q^{i-1}
\otimes r
\otimes q^{i+1}\otimes\cdots\otimes q^m.
\]
Then all quantities below are finite and
\begin{equation}
\label{eq:coordinate-KL}
\KL(q_r\|p)
=
\KL\bigl(r\|T_i(q^{-i})\bigr)
+
C_i(q^{-i}),
\end{equation}
where
\[
C_i(q^{-i})
=
\sum_{k\neq i}\int_{\R}q^k\log q^k
+
\log Z
-
\log\int_{\R}e^{-\barV_i(x,q^{-i})}\,\dd x
\]
is independent of $r$.
\end{lemma}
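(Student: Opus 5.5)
The plan is a direct computation with densities, splitting $\log(q_r/p)$ into pieces whose integrability we check one at a time. Write $f_r(x)=r(x_i)\prod_{k\neq i}q^k(x_k)$ for the density of $q_r$. Then
\[
\log\frac{f_r}{p}=\log r(x_i)+\sum_{k\neq i}\log q^k(x_k)+V(x)+\log Z .
\]
We integrate each term against $q_r$ separately, which requires absolute integrability of each.

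The product terms are handled coordinate by coordinate. By Fubini, $\int\log q^k(x_k)\,q_r(\dd x)=\int_\R q^k\log q^k$, which is finite by hypothesis, and the same holds for $\int r\log r$. The potential term is integrable because $|V(x)|\leq C(1+\norm{x}^2)$ by \eqref{eq:quadratic-growth-V} and every factor has a finite second moment. Since all terms are absolutely integrable, $\KL(q_r\|p)$ equals the sum of their integrals; this avoids any $\infty-\infty$ issue. Fubini together with the definition \eqref{eq:barV} then gives
\[
\int V\,\dd q_r=\int_\R \barV_i(x,q^{-i})\,r(\dd x),
\]
where $q^{-i}\in\cP_2(\R^{m-1})$.

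Next we introduce the Gibbs update. Set $Z_i=\int_\R e^{-\barV_i(y,q^{-i})}\dd y$, which is finite and positive by \eqref{eq:conditional-coercivity}. Then $\log T_i(q^{-i})(x)=-\barV_i(x,q^{-i})-\log Z_i$. Since $r$ has finite entropy and $\barV_i(\cdot,q^{-i})$ is $r$-integrable (quadratic growth and second moment), we get
\[
\KL\bigl(r\|T_i(q^{-i})\bigr)=\int r\log r+\int\barV_i(x,q^{-i})\,r(\dd x)+\log Z_i ,
\]
and this is finite. Substituting into the previous decomposition yields \eqref{eq:coordinate-KL} with exactly the stated $C_i(q^{-i})$, which does not involve $r$.

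The main point of care is the integrability bookkeeping. In particular, we must verify $\barV_i(x,q^{-i})\leq C(1+x^2+\int\norm{z^{-i}}^2\dd q^{-i})$ and the corresponding lower bound, so that Fubini applies to $V$ integrated against $q_r$ without sign assumptions. This follows from the two-sided quadratic bound on $|V|$.
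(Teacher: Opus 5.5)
Your proposal is correct and follows essentially the same route as the paper: expand both relative entropies using the product density, identify $\int V\,\dd q_r$ with $\int\barV_i(\cdot,q^{-i})\,\dd r$ via Fubini, and subtract, with finiteness coming from the entropy hypotheses and the quadratic growth of $V$. Your explicit absolute-integrability bookkeeping, which rules out $\infty-\infty$, is slightly more careful than the paper's write-up but not different in substance.
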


\begin{proof}
Since $q_r$ is a product measure,
\[
\begin{aligned}
\KL(q_r\|p)
&=
\int_{\R}r\log r
+
\sum_{k\neq i}\int_{\R}q^k\log q^k
+
\int_{\R}\barV_i(x,q^{-i})\,r(\dd x)
+
\log Z.
\end{aligned}
\]
If
\[
Z_i(q^{-i})
=
\int_{\R}e^{-\barV_i(x,q^{-i})}\,\dd x,
\]
then
\[
\KL\bigl(r\|T_i(q^{-i})\bigr)
=
\int_{\R}r\log r
+
\int_{\R}\barV_i(x,q^{-i})\,r(\dd x)
+
\log Z_i(q^{-i}).
\]
The entropy terms are finite by assumption. Moreover, global smoothness
implies
\[
|\barV_i(x,q^{-i})|\leq C_{q^{-i}}(1+x^2),
\]
so the potential terms are finite under the stated second-moment
assumptions. Subtracting the two identities gives
\eqref{eq:coordinate-KL}.
\end{proof}

The coordinatewise decomposition yields the Gibbs equations for every global
minimizer. In the nonconvex setting considered here,
only this necessary direction is available without additional assumptions.

\begin{proposition}
\label{prop:fixed}
Every MFVI minimizer $q^\star$ satisfies
\begin{equation}
\label{eq:fixed}
q^{\star,i}=T_i(q^{\star,-i}),
\qquad i\in[m].
\end{equation}
\end{proposition}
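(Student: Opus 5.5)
The plan is to combine the coordinatewise decomposition of \autoref{lem:coordinate-KL} with uniqueness of minimizers of relative entropy. Fix an MFVI minimizer $q^\star=q^{\star,1}\otimes\cdots\otimes q^{\star,m}$ and an index $i\in[m]$.

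First we check that the hypotheses of \autoref{lem:coordinate-KL} hold with $q^k=q^{\star,k}$ for $k\neq i$. The proof of \autoref{thm:existence} shows that a minimizer lies in $\cP_2(\R^m)$ with absolutely continuous marginals, and that every marginal has a finite entropy integral. That argument used only $\KL(q^\star\|p)<\infty$, the entropy inequality with \eqref{eq:exp-moment-p}, and the product structure, so it applies to any minimizer, not only the one it constructs. In particular each $q^{\star,k}\in\cP_{2,\mathrm{ac}}(\R)$ has finite entropy, and $q^{\star,-i}\in\cP_2(\R^{m-1})$, so $T_i(q^{\star,-i})$ is well defined. From the discussion after \autoref{lem:conditional}, $T_i(q^{\star,-i})\in\cP_2(\R)$ and it has finite entropy integral, so it is an admissible choice of $r$ as well.

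Next we apply \eqref{eq:coordinate-KL} twice. Taking $r=q^{\star,i}$ gives $\KL(q^\star\|p)=\KL(q^{\star,i}\|T_i(q^{\star,-i}))+C_i(q^{\star,-i})$. Taking $r=T_i(q^{\star,-i})$ gives a product law $\tilde q\in\cQ$ with $\KL(\tilde q\|p)=C_i(q^{\star,-i})$. Since $q^\star$ is a global minimizer over $\cQ$, we have $\KL(q^\star\|p)\leq\KL(\tilde q\|p)$, and therefore $\KL(q^{\star,i}\|T_i(q^{\star,-i}))\leq 0$. Relative entropy is nonnegative and vanishes only when its two arguments coincide, so $q^{\star,i}=T_i(q^{\star,-i})$. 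Since $i$ was arbitrary, \eqref{eq:fixed} follows.

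The only real obstacle is the finiteness bookkeeping that makes \autoref{lem:coordinate-KL} applicable: finite second moments, finite marginal entropies of an arbitrary minimizer, and finite entropy of the Gibbs update, the last following from its Gaussian tails and the quadratic growth of $\barV_i$. Once that is in place, the variational step is immediate.
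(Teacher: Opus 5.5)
Your proposal is correct and takes essentially the same route as the paper. You apply the coordinatewise decomposition of \autoref{lem:coordinate-KL} at the minimizer, compare with the competitor obtained by replacing the $i$-th marginal by $T_i(q^{\star,-i})$, and conclude from the nonnegativity and equality case of relative entropy. Your explicit check that the finiteness hypotheses hold for an arbitrary minimizer, and not only the one constructed in \autoref{thm:existence}, spells out what the paper compresses into ``by \autoref{thm:existence} and its proof.'' Those hypotheses are second moments, marginal entropies, and admissibility of the Gibbs update as a choice of $r$.
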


\begin{proof}
By \autoref{thm:existence} and its proof, every minimizer has finite second
moments and finite marginal entropy integrals. Thus
\autoref{lem:coordinate-KL} applies. Fix $i$ and hold
$q^{\star,-i}$ fixed. Minimizing the MFVI objective over the $i$-th
marginal is equivalent to minimizing
\[
\KL\bigl(r\|T_i(q^{\star,-i})\bigr).
\]
The relative entropy is nonnegative, with equality if and only if
\[
r=T_i(q^{\star,-i}).
\]
The global optimality of $q^\star$ therefore implies
\eqref{eq:fixed}.
\end{proof}

\begin{remark}
\label{rem:no-converse}
The converse of \autoref{prop:fixed} is false without additional structure.
A fixed point of
\[
T=(T_1,\ldots,T_m)
\]
is a coordinatewise minimizer of the MFVI objective, but it need not be a
global minimizer of the nonconvex product-measure problem. Additional
conditions are required to deduce global optimality or convergence of CAVI;
see \cite{AL2026,BPY2025,LZ2024}.
\end{remark}

The moment estimates for the Gibbs marginals use the following boundary
behavior in the integration-by-parts argument.

\begin{lemma}
\label{lem:boundary}
Let $f\in C^2(\R)$ satisfy \eqref{eq:conditional-coercivity}.  Then $f$ is
coercive and therefore attains its global minimum.  Let $a$ be any global
minimizer of $f$. Then, for every integer $r\geq0$,
\begin{equation}
\label{eq:boundary-limit}
\lim_{|x|\to\infty}|x-a|^r e^{-f(x)}=0.
\end{equation}
Moreover, both
\[
|x-a|^r e^{-f(x)}
\qquad\text{and}\qquad
|x-a|^r|f'(x)|e^{-f(x)}
\]
are integrable over $\R$.
\end{lemma}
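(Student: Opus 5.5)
The argument uses only the two estimates in \eqref{eq:conditional-coercivity}: the quadratic lower bound $f(x)\geq\frac{\alpha}{8}x^2-C$ and the linear derivative bound $|f'(x)|\leq|f'(0)|+L|x|$. The proof has three steps, in this order: existence of a minimizer, the boundary limit \eqref{eq:boundary-limit}, and the two integrability claims.

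\emph{Existence of a minimizer.} By the lower bound, $f(x)\to\infty$ as $|x|\to\infty$. Hence there is $R$ with $f(x)>f(0)$ for $|x|>R$. The continuous function $f$ attains its minimum on $[-R,R]$, and this minimum is also the global minimum of $f$. Let $a$ be any global minimizer.

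\emph{Boundary limit.} The elementary inequality $|x-a|^r\leq 2^{r}(|x|^r+|a|^r)$ gives
\[
|x-a|^re^{-f(x)}\leq e^{C}\,2^r\bigl(|x|^r+|a|^r\bigr)e^{-\alpha x^2/8}.
\]
The right-hand side tends to $0$ as $|x|\to\infty$, because Gaussian decay dominates any polynomial. This proves \eqref{eq:boundary-limit}.

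\emph{Integrability.} The same Gaussian majorant shows that $|x-a|^re^{-f(x)}$ is integrable over $\R$. For the second function, combine the derivative bound with the same estimate:
\[
|x-a|^r|f'(x)|e^{-f(x)}\leq e^C2^r\bigl(|x|^r+|a|^r\bigr)\bigl(|f'(0)|+L|x|\bigr)e^{-\alpha x^2/8}.
\]
The right-hand side is a polynomial of degree $r+1$ times a Gaussian, so it is integrable. Local integrability is automatic, since $f\in C^2$ makes both functions continuous.

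No step is a genuine obstacle. The only point needing care is that the constants depend on $a$, $C$, $f'(0)$ and $r$ but not on $x$; the two estimates in \eqref{eq:conditional-coercivity} are enough for this, with no further use of the defect condition.
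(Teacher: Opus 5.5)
Your proof is correct and follows the paper's own argument: the quadratic lower bound gives a polynomial-times-Gaussian majorant that yields both the boundary limit and integrability, and the linear bound on $f'$ raises the polynomial degree by one. You also write out the existence of a global minimizer, which the paper's proof leaves implicit.
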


\begin{proof}
The coercivity estimate in \eqref{eq:conditional-coercivity} gives
\[
|x-a|^r e^{-f(x)}
\leq
C_r(1+|x|^r)e^{-\alpha x^2/8}.
\]
The right-hand side is integrable and tends to zero as
$|x|\to\infty$, proving \eqref{eq:boundary-limit}. Since
\[
|f'(x)|\leq |f'(0)|+L|x|,
\]
the second integrability statement follows by increasing the polynomial
degree by one.
\end{proof}

Fix an MFVI minimizer $q^\star$, and define
\[
f_i(x)=\barV_i(x,q^{\star,-i}),
\qquad i\in[m].
\]
By \eqref{eq:conditional-coercivity}, each $f_i$ is coercive and therefore
attains its global minimum. Let $a_i$ be any global minimizer of $f_i$.

The Gibbs marginals satisfy uniform centered even-moment estimates.

\begin{lemma}
\label{lem:moments}
If $X_i\sim q^{\star,i}$, then
\begin{equation}
\label{eq:moment2}
\E|X_i-a_i|^2
\leq
\frac{1+\beta}{\alpha}.
\end{equation}
More generally, for every integer $k\geq2$,
\begin{equation}
\label{eq:moment-recursion}
\E|X_i-a_i|^{2k}
\leq
\frac{2k-1+\beta}{\alpha}
\E|X_i-a_i|^{2k-2}.
\end{equation}
In particular,
\begin{equation}
\label{eq:moment6}
\E|X_i-a_i|^6
\leq
\frac{(1+\beta)(3+\beta)(5+\beta)}{\alpha^3}.
\end{equation}
\end{lemma}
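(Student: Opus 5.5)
By \autoref{prop:fixed}, $q^{\star,i}=T_i(q^{\star,-i})$ has density $Z_i^{-1}e^{-f_i}$ with $f_i=\barV_i(\cdot,q^{\star,-i})$. The plan is to prove both moment bounds by integration by parts against this Gibbs density, using the one-dimensional defective monotonicity \eqref{eq:conditional-defect} relative to the minimizer $a_i$. Since $a_i$ is a global minimizer of the $C^2$ function $f_i$, we have $f_i'(a_i)=0$. Applying \eqref{eq:conditional-defect} with $y=a_i$ therefore gives the pointwise inequality
\[
f_i'(x)(x-a_i)\geq\alpha|x-a_i|^2-\beta,\qquad x\in\R.
\]

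The integration-by-parts identity is the following. For an integer $k\geq1$, set $g(x)=(x-a_i)^{2k-1}$ and write
\[
\int g'(x)e^{-f_i(x)}\dd x=\int g(x)f_i'(x)e^{-f_i(x)}\dd x .
\]
The boundary terms vanish by \eqref{eq:boundary-limit} of \autoref{lem:boundary}, and both integrands are integrable by the same lemma. Dividing by $Z_i$ yields
\[
(2k-1)\E|X_i-a_i|^{2k-2}=\E\bigl[(X_i-a_i)^{2k-2}\,f_i'(X_i)(X_i-a_i)\bigr].
\]
Since $(x-a_i)^{2k-2}\geq0$, we may multiply the pointwise inequality by it and bound the right-hand side from below by $\alpha\E|X_i-a_i|^{2k}-\beta\E|X_i-a_i|^{2k-2}$.

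This gives $\alpha\E|X_i-a_i|^{2k}\leq(2k-1+\beta)\E|X_i-a_i|^{2k-2}$, which is \eqref{eq:moment-recursion}. For $k=1$ the right-hand moment is $1$, which gives \eqref{eq:moment2}. Iterating the recursion with $k=2,3$ gives \eqref{eq:moment6}.

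The only delicate point is justifying the integration by parts on all of $\R$. I would integrate over $[-R,R]$ and let $R\to\infty$, using the decay and integrability from \autoref{lem:boundary}, which applies because $f_i$ satisfies \eqref{eq:conditional-coercivity} and is $C^2$ by \autoref{lem:conditional}. Finiteness of all moments, also from \autoref{lem:boundary}, ensures that no expression of the form $\infty-\infty$ appears.
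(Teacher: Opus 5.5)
Your proposal is correct and follows essentially the same route as the paper. You take the Gibbs density from \autoref{prop:fixed}, use $f_i'(a_i)=0$ with \eqref{eq:conditional-defect} at $y=a_i$, integrate the derivative of $(x-a_i)^{2k-1}e^{-f_i(x)}$ by parts with the boundary terms controlled by \autoref{lem:boundary}, and iterate. The only cosmetic difference is that you obtain the $k=1$ bound \eqref{eq:moment2} as the base case of the same identity, whereas the paper writes that case out separately.
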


\begin{proof}
By \autoref{prop:fixed},
\[
q^{\star,i}(\dd x)
=
Z_i^{-1}e^{-f_i(x)}\,\dd x.
\]
Since $a_i$ is a global minimizer and $f_i\in C^2(\R)$,
\[
f_i'(a_i)=0.
\]
Applying \eqref{eq:conditional-defect} with $y=a_i$ gives
\begin{equation}
\label{eq:centered-drift}
(x-a_i)f_i'(x)
\geq
\alpha|x-a_i|^2-\beta.
\end{equation}
Integrating the derivative of
\[
(x-a_i)e^{-f_i(x)}
\]
over $[-R,R]$, and then letting $R\to\infty$, gives
\[
\E\bigl[(X_i-a_i)f_i'(X_i)\bigr]=1,
\]
where the boundary terms vanish by \autoref{lem:boundary}. Taking
expectations in \eqref{eq:centered-drift} therefore yields
\[
1
\geq
\alpha\E|X_i-a_i|^2-\beta,
\]
which proves \eqref{eq:moment2}.

For $k\geq2$, integration of the derivative of
\[
(x-a_i)^{2k-1}e^{-f_i(x)}
\]
gives
\[
\E\bigl[(X_i-a_i)^{2k-1}f_i'(X_i)\bigr]
=
(2k-1)\E|X_i-a_i|^{2k-2}.
\]
Multiplying \eqref{eq:centered-drift} by
$|x-a_i|^{2k-2}$, integrating, and using the preceding identity yields
\[
\alpha\E|X_i-a_i|^{2k}
\leq
(2k-1+\beta)\E|X_i-a_i|^{2k-2}.
\]
This gives \eqref{eq:moment-recursion}. Applying it successively for
$k=2$ and $k=3$, together with \eqref{eq:moment2}, gives
\eqref{eq:moment6}.
\end{proof}

The centered marginal bounds imply a dimension-explicit estimate for the
stationary gradient moment, which controls both the stochastic batch error and
the local discretization error in the particle analysis.

\begin{lemma}
\label{lem:gradient-moment}
If $X\sim q^\star$, then
\begin{equation}
\label{eq:grad-zero}
\E\grad V(X)=0,
\end{equation}
and
\begin{equation}
\label{eq:Gamma}
\Gamma_\star
:=
\E\norm{\grad V(X)}^2
\leq
\frac{mL^2(1+\beta)}{\alpha}.
\end{equation}
\end{lemma}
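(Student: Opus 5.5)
Both claims come from the coordinatewise Gibbs structure of \autoref{prop:fixed}, combined with the centered moments of \autoref{lem:moments}.

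\emph{Mean-zero identity.} Write $X=(X_1,\ldots,X_m)$ with independent coordinates $X_k\sim q^{\star,k}$. Fix $i$ and condition on $X^{-i}$. By independence and the differentiation under the integral from the proof of \autoref{lem:conditional},
\[
\E\bigl[\partial_iV(X)\bigr]=\E\bigl[\barV_i'(X_i,q^{\star,-i})\bigr]=\E f_i'(X_i).
\]
The law of $X_i$ is $Z_i^{-1}e^{-f_i}\,\dd x$. So $\E f_i'(X_i)=Z_i^{-1}\int_\R f_i'e^{-f_i}\,\dd x$, which equals
\[
Z_i^{-1}\Bigl[-e^{-f_i}\Bigr]_{-\infty}^{\infty}=0 .
\]
The boundary terms vanish, and the integrand is integrable, by \autoref{lem:boundary} with $r=0$. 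This proves \eqref{eq:grad-zero}.

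\emph{Second moment.} We have $\Gamma_\star=\sum_i\E|\partial_iV(X)|^2$, so it suffices to show $\E|\partial_iV(X)|^2\le L^2(1+\beta)/\alpha$ for each $i$. By \eqref{eq:grad-zero}, $\partial_iV(X)$ is centered, so its second moment is its variance. I would take an independent copy $Y\sim q^\star$ and use
\[
\Var(\partial_iV(X))=\tfrac12\E|\partial_iV(X)-\partial_iV(Y)|^2 .
\]
The Lipschitz bound then gives at most $\tfrac{L^2}{2}\E\|X-Y\|^2=L^2\sum_k\Var(X_k)$. The catch is that this sum produces a factor $m$ for every coordinate, giving $m^2$ in total, which is too lossy. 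This is the main obstacle.

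\emph{Handling the obstacle.} Instead I would use the mixed-coordinate gradient and control the full vector at once. The plan is to bound
\[
\E\|\grad V(X)\|^2=\E\|\grad V(X)-\grad V(a)+\grad V(a)\|^2
\]
with $a=(a_1,\ldots,a_m)$. That needs $\grad V(a)$ controlled, which is not obviously available. A cleaner route is the Gaussian-Poincaré style bound via independence (Efron--Stein). Since $\partial_iV(X)$ is a function of independent coordinates,
\[
\Var(\partial_iV(X))\le\sum_k\E\Var_k(\partial_iV)\le\sum_k\E|\partial_{ik}V|^2\,|X_k-X_k'|^2/2 .
\]
Summing over $i$ gives $\sum_k\E\bigl[\|\Hess V\,e_k\|^2\bigr]\Var(X_k)\le L^2\sum_k\Var(X_k)$. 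To make this rigorous I would write $\partial_iV(X)-\partial_iV(X^{(k)})$ along the segment in coordinate $k$, where $X^{(k)}$ replaces $X_k$ by an independent copy, and use $\sum_i|\partial_{ik}V|^2=\|\Hess V e_k\|^2\le L^2$ at each point of the segment.

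\emph{Conclusion.} Finally, $\Var(X_k)\le\E|X_k-a_k|^2\le(1+\beta)/\alpha$ by \eqref{eq:moment2}. This yields
\[
\Gamma_\star\le L^2\sum_{k=1}^m\frac{1+\beta}{\alpha}=\frac{mL^2(1+\beta)}{\alpha},
\]
which is \eqref{eq:Gamma}. The Efron--Stein step, with the constant tracked so that no extra factor appears, is where the care is needed.
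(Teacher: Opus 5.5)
Your proof is correct, and the mean-zero part is the paper's argument: Fubini reduces $\E\partial_iV(X)$ to $\E f_i'(X_i)$, and $\int f_i'e^{-f_i}\,\dd x=0$ by \autoref{lem:boundary}. For the second moment you take a different route, but the obstacle that pushed you there is not real. The $m^2$ loss appears only because you applied the Lipschitz bound to each $\partial_iV$ separately before summing over $i$. The paper applies your independent-copy identity to the whole gradient vector. Since $\E\grad V(X)=0$,
\[
2\E\norm{\grad V(X)}^2=\E\norm{\grad V(X)-\grad V(X')}^2\le L^2\E\norm{X-X'}^2=2L^2\sum_{i=1}^m\Var(X_i),
\]
and \eqref{eq:moment2} then gives the claim in one line. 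Your Efron--Stein argument is also valid and gives the same constant. The segment computation is unnecessary there too, because $\sum_i|\partial_iV(X)-\partial_iV(X^{(k)})|^2=\norm{\grad V(X)-\grad V(X^{(k)})}^2\le L^2|X_k-X_k'|^2$ directly. What your route buys is tensorization over coordinates: it lets you replace $L^2$ by the coordinate-dependent constants $\sup_x\norm{\Hess V(x)e_k}^2$, which can be sharper when some coordinates interact weakly. The cost is extra machinery that the stated bound does not need. One small repair: your intermediate expression $\sum_k\E[\norm{\Hess V e_k}^2]\Var(X_k)$ is not literally valid, because the Hessian is evaluated at points depending on $X_k$ and $X_k'$. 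You must bound it pointwise, by $L^2$ or a supremum, before factoring out $\E|X_k-X_k'|^2/2=\Var(X_k)$.
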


\begin{proof}
Since $q^\star$ is a product measure, Fubini's theorem gives
\[
\E\partial_iV(X)
=
\E f_i'(X_i).
\]
By \autoref{lem:boundary},
\[
\int_{\R}f_i'(x)e^{-f_i(x)}\,\dd x
=
-\int_{\R}\frac{\dd}{\dd x}e^{-f_i(x)}\,\dd x
=
0.
\]
Hence
\[
\E\partial_iV(X)=0
\]
for every $i$, which proves \eqref{eq:grad-zero}.

Let $X'$ be an independent copy of $X$. Since
$\E\grad V(X)=0$,
\[
\begin{aligned}
2\E\norm{\grad V(X)}^2
&=
\E\norm{\grad V(X)-\grad V(X')}^2\\
&\leq
L^2\E\norm{X-X'}^2.
\end{aligned}
\]
Furthermore,
\[
\E\norm{X-X'}^2
=
2\sum_{i=1}^m\Var(X_i).
\]
For every $i$,
\[
\Var(X_i)
=
\inf_{a\in\R}\E|X_i-a|^2
\leq
\E|X_i-a_i|^2
\leq
\frac{1+\beta}{\alpha}
\]
by \autoref{lem:moments}. Combining these estimates proves
\eqref{eq:Gamma}.
\end{proof}

The sixth-moment estimate also controls approximation of $q^\star$ by a
product of one-dimensional empirical measures. We use the following
one-dimensional bound.

\begin{lemma}
\label{lem:FG}
There exists a universal constant $C_{\mathrm{FG}}<\infty$ such that, for
every $\mu\in\cP_6(\R)$, every $a\in\R$, and
\[
\widehat\mu_N
=
\frac1N\sum_{j=1}^N\delta_{Z_j},
\qquad
Z_j\stackrel{\mathrm{i.i.d.}}{\sim}\mu,
\]
one has
\begin{equation*}
\E\cW^2(\widehat\mu_N,\mu)
\leq
\frac{C_{\mathrm{FG}}}{\sqrt N}
\left(
\int_{\R}|x-a|^6\,\mu(\dd x)
\right)^{1/3}.
\end{equation*}
\end{lemma}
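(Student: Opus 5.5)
By translation invariance of $\cW$ on $\R$ (shifting $\mu$ and every sample $Z_j$ by $-a$ leaves $\cW(\widehat\mu_N,\mu)$ unchanged), I will reduce to $a=0$ and write $M_6=\int|x|^6\,\mu(\dd x)$. The bound is then homogeneous of degree two under dilations $x\mapsto\lambda x$: both sides scale by $\lambda^2$. So it suffices to prove $\E\cW^2(\widehat\mu_N,\mu)\leq C N^{-1/2}M_6^{1/3}$ with a universal $C$.

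My main route is the one-dimensional quantile/CDF representation. I will use the standard inequality
\[
\cW^2(\nu,\mu)\leq 2\int_\R|x|\,|F_\nu(x)-F_\mu(x)|\,\dd x ,
\]
which I would cite as in Bobkov--Ledoux. It can be derived from $\cW_2^2=\int_0^1|F_\nu^{-1}-F_\mu^{-1}|^2\,\dd t$ by writing $|u-v|^2\leq 2\bigl|\int_v^u|s|\,\dd s\bigr|$ for real $u,v$ and applying Fubini to exchange the $t$- and $s$-integrals, which turns the $\dd t$-measure of the region between the two quantile curves at height $s$ into $|F_\nu(s)-F_\mu(s)|$. Taking $\nu=\widehat\mu_N$ and using $\E|F_{\widehat\mu_N}(x)-F(x)|\leq\sqrt{F(x)(1-F(x))/N}$ by Cauchy--Schwarz on a binomial, I get
\[
\E\cW^2(\widehat\mu_N,\mu)\leq \frac{2}{\sqrt N}\int_\R|x|\sqrt{F(x)(1-F(x))}\,\dd x .
\]

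The remaining step is to bound $J=\int|x|\sqrt{F(1-F)}\,\dd x$ by $C M_6^{1/3}$. I will use $F(1-F)\leq\min\{1/4,\ \mu(|X|\geq|x|)\}$ together with Markov's inequality $\mu(|X|\geq t)\leq M_6t^{-6}$. Splitting at $t_0=M_6^{1/6}$ gives
\[
J\leq 2\int_0^{t_0}\frac t2\,\dd t+2\int_{t_0}^\infty t\,M_6^{1/2}t^{-3}\,\dd t=\frac{t_0^2}{2}+\frac{2M_6^{1/2}}{t_0}=\frac52 M_6^{1/3},
\]
which is the claim with $C_{\mathrm{FG}}=5$.

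The step I expect to require the most care is the quantile/CDF inequality with the weight $|x|$. If it resists a clean derivation, I would instead invoke the Fournier--Guillin theorem directly: in dimension $d=1$ with $p=2$ and $q=6>4$, it gives $\E\cW_2^2\leq C M_6^{1/3}N^{-1/2}$, and combining this with the translation reduction finishes the proof.
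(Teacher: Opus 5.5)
Your proof is correct up to one constant. It takes a different route from the paper, which simply applies Fournier--Guillin's Theorem~1 with $d=1$, $p=2$, $q=6$ and then recenters at $a$ by translation invariance. That is exactly your fallback.

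\textbf{The slip.} The pointwise inequality $|u-v|^2\le 2\bigl|\int_v^u|s|\,\dd s\bigr|$ fails when $u$ and $v$ have opposite signs. For $u=1$, $v=-1$ the left side is $4$ and the right side is $2$. Correspondingly, $\cW^2(\delta_1,\delta_{-1})=4$, while $2\int|x|\,|F_{\delta_1}(x)-F_{\delta_{-1}}(x)|\,\dd x=2$. So the inequality you attribute to Bobkov--Ledoux is false with constant $2$. Their constant for $p=2$ is $p2^{p-1}=4$:
\begin{itemize}
\item for $v<0<u$ one has $(u-v)^2\le 2(u^2+v^2)=4\int_v^u|s|\,\dd s$;
\item for $u,v$ of the same sign, your bound with constant $2$ already holds.
\end{itemize}
With $4$ in place of $2$, every remaining step goes through unchanged. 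This includes the Fubini identity that turns the $t$-measure of the region between the quantile curves into $|F_\nu(s)-F_\mu(s)|$, the binomial Cauchy--Schwarz bound $\E|F_{\widehat\mu_N}-F|\le\sqrt{F(1-F)/N}$, and the tail split at $t_0=M_6^{1/6}$ giving $\int|x|\sqrt{F(1-F)}\,\dd x\le\frac52M_6^{1/3}$. The result is the lemma with $C_{\mathrm{FG}}=10$ instead of $5$. Since the lemma only asserts the existence of a universal constant, this is a fixable error rather than a gap.

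\textbf{Comparison of routes.} The paper's citation is one line, but it leaves $C_{\mathrm{FG}}$ unspecified and imports a general-dimension theorem for a purely one-dimensional need. Your CDF argument is self-contained and gives an explicit constant. It also shows that the sixth moment is more than the $N^{-1/2}$ rate for $\E\cW^2$ requires. After centering, the same computation works whenever $\int_0^\infty t\,\mu(|X|\ge t)^{1/2}\,\dd t<\infty$, which holds under any finite moment of order $q>4$. The paper's downstream use, via \eqref{eq:moment6}, would then only need a lower-order moment bound.
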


\begin{proof}
Apply \cite[Theorem~1]{FG2015} with dimension $d=1$, transport exponent $2$,
and moment exponent $6$. The result there is stated with
moments centered at the origin, and translation invariance of the
Wasserstein distance permits centering at an arbitrary $a\in\R$.
\end{proof}

Let $Y\in\R^{m\times N}$ have independent columns with common law
$q^\star$, and define
\[
q_Y
=
\bigotimes_{i=1}^m
\left(
\frac1N\sum_{j=1}^N\delta_{Y^{i,j}}
\right),
\qquad
\varepsilon_N^2(q^\star)
=
\E\cW^2(q_Y,q^\star).
\]

Product additivity gives the empirical approximation bound used below.

\begin{corollary}
\label{cor:particle}
Under the preceding assumptions,
\begin{equation}
\label{eq:epsN2}
\varepsilon_N^2(q^\star)
\leq
\frac{C_{\mathrm{FG}}m}{\alpha\sqrt N}
\bigl((1+\beta)(3+\beta)(5+\beta)\bigr)^{1/3}.
\end{equation}
Consequently,
\begin{equation}
\label{eq:epsN}
\varepsilon_N(q^\star)
\leq
C
\frac{\sqrt m}{\sqrt\alpha}
\bigl((1+\beta)(3+\beta)(5+\beta)\bigr)^{1/6}
N^{-1/4},
\end{equation}
where $C=\sqrt{C_{\mathrm{FG}}}$ is universal.
\end{corollary}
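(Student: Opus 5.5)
The argument rests on the additivity of the squared quadratic Wasserstein distance over product measures, combined with the one-dimensional bound of \autoref{lem:FG} and the centered sixth-moment estimate \eqref{eq:moment6} from \autoref{lem:moments}.

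\textbf{Step 1: reduce to one dimension.} For product measures $\mu=\bigotimes_i\mu^i$ and $\nu=\bigotimes_i\nu^i$ on $\R^m$,
\[
\cW^2(\mu,\nu)\leq\sum_{i=1}^m\cW^2(\mu^i,\nu^i).
\]
To see this, take an optimal coupling $\gamma_i$ of $\mu^i$ and $\nu^i$ for each $i$. The product $\bigotimes_i\gamma_i$, after reordering coordinates, is a coupling of $\mu$ and $\nu$. Its cost is $\sum_i\int|x_i-y_i|^2\,\dd\gamma_i$, since $\|x-y\|^2$ splits coordinatewise. Equality in fact holds, but only the upper bound is needed here.

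Apply this with $\mu=q_Y$ and $\nu=q^\star$, and take expectations:
\[
\varepsilon_N^2(q^\star)\leq\sum_{i=1}^m\E\cW^2(\widehat\mu^i_N,q^{\star,i}),
\qquad
\widehat\mu^i_N=\frac1N\sum_{j=1}^N\delta_{Y^{i,j}}.
\]
The columns of $Y$ are independent with law $q^\star$. Hence, for fixed $i$, the variables $Y^{i,1},\dots,Y^{i,N}$ are i.i.d.\ with law $q^{\star,i}$. Joint dependence across rows does not matter, because expectation is linear.

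\textbf{Step 2: apply the one-dimensional bound row by row.} Apply \autoref{lem:FG} to each row, centering at the minimizer $a_i$ of $f_i$. By \eqref{eq:moment6},
\[
\int|x-a_i|^6\,q^{\star,i}(\dd x)\leq\frac{(1+\beta)(3+\beta)(5+\beta)}{\alpha^3},
\]
so in particular $q^{\star,i}\in\cP_6(\R)$ and \autoref{lem:FG} applies. Taking the cube root of this moment bound gives
\[
\E\cW^2(\widehat\mu^i_N,q^{\star,i})\leq\frac{C_{\mathrm{FG}}}{\alpha\sqrt N}\bigl((1+\beta)(3+\beta)(5+\beta)\bigr)^{1/3}.
\]

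\textbf{Step 3: sum and take square roots.} Summing the row bounds over $i\in[m]$ gives \eqref{eq:epsN2}. Taking square roots then gives \eqref{eq:epsN} with $C=\sqrt{C_{\mathrm{FG}}}$.

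No step presents a real obstacle. The only points requiring care are the product-coupling inequality in Step 1 and checking that \autoref{lem:FG} admits an arbitrary center $a$.
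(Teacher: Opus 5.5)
Your proposal is correct and follows the paper's proof essentially verbatim. Both arguments use product additivity of $\cW^2$ (you rightly note that only the upper bound is needed), then apply \autoref{lem:FG} row by row centered at $a_i$, then the sixth-moment bound \eqref{eq:moment6}, and finish by summing over $i$ and taking a square root. Your remarks that each row of $Y$ is i.i.d.\ with law $q^{\star,i}$, and that \eqref{eq:moment6} ensures $q^{\star,i}\in\cP_6(\R)$, fill in small details the paper leaves implicit.
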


\begin{proof}
Additivity of the squared Wasserstein distance for product measures gives
\[
\cW^2(q_Y,q^\star)
=
\sum_{i=1}^m
\cW^2\left(
\frac1N\sum_{j=1}^N\delta_{Y^{i,j}},
q^{\star,i}
\right).
\]
For each $i$, apply \autoref{lem:FG} with
\[
\mu=q^{\star,i},
\qquad
a=a_i,
\]
and then use \eqref{eq:moment6}. Summing over $i$ yields
\eqref{eq:epsN2}. Taking square roots gives \eqref{eq:epsN}.
\end{proof}

The empirical rate improves under stronger one-dimensional regularity.

\begin{remark}
\label{rem:empirical-rates}
The rate $N^{-1/4}$ in \eqref{eq:epsN} concerns the root-mean-square
$\cW$ error and uses only a sixth-moment bound. It is therefore a worst-case estimate rather than a sharp prediction for
smooth Gibbs marginals.

When $\beta=0$, \eqref{eq:conditional-defect} implies that every
fixed-point potential $f_i$ is $\alpha$-strongly convex. Hence each
marginal $q^{\star,i}$ is $\alpha$-strongly log-concave. For $N\geq2$,
the one-dimensional empirical estimate of Bobkov and Ledoux \cite{BL2019},
used in \cite[Lemma~6.5]{DWZZ2026}, gives
\[
\E\cW^2(\widehat q_N^i,q^{\star,i})
\leq
C\frac{\log N}{\alpha N}.
\]
By product additivity,
\[
\varepsilon_N(q^\star)
\leq
C\sqrt{\frac{m\log N}{\alpha N}}.
\]

Full log-concavity is not necessary for a sharper one-dimensional rate.
A precise criterion is available from the one-dimensional empirical theory
in \cite[Chapter~5]{BL2019}.  If $\mu$ is supported on an interval, has an
a.e. positive density $f$ there, and the inverse distribution function has
the regularity specified in that result, then the standard root-mean-square
$N^{-1/2}$ rate is characterized by finiteness of
\begin{equation}
\label{eq:quantile-regularity}
J_2(\mu)
=
\int_0^1
\frac{t(1-t)}{f(F^{-1}(t))^2}\,\dd t
=
\int_\R\frac{F(x)(1-F(x))}{f(x)}\,\dd x.
\end{equation}
In particular, $J_2(\mu)<\infty$ yields
$\E\cW^2(\widehat\mu_N,\mu)=O(N^{-1})$.  Hence, if the fixed-point marginals
satisfy bounds
\[
\E\cW^2(\widehat q_N^i,q^{\star,i})\leq\frac{C_i}{N},
\]
then product additivity gives
\begin{equation}
\label{eq:epsN-standard-rate}
\varepsilon_N(q^\star)
\leq
\left(\frac1N\sum_{i=1}^m C_i\right)^{1/2}.
\end{equation}
The criterion applies without log-concavity, provided the one-dimensional
marginals have sufficiently regular quantiles.

Sub-Gaussian tails alone do not imply this rate. Even the standard Gaussian
has $J_2=\infty$ and fails to satisfy a pure $O(N^{-1})$ mean-squared
quadratic empirical-transport bound.
More generally, a tail upper bound does not control the reciprocal density in
\eqref{eq:quantile-regularity}, especially across low-density bottlenecks.
Quadratic coercivity alone therefore does not justify replacing
\eqref{eq:epsN} by an $N^{-1/2}$ root-mean-square rate uniformly over the
present class.  We retain the Fournier--Guillin estimate in the main theorem
because it follows from assumptions already used elsewhere;
\eqref{eq:epsN-standard-rate} gives the sharper rate when quantile regularity
is available.  The smooth benchmark introduced in
\autoref{sec:benchmark} exhibits an empirical rate close to $N^{-1/2}$; see
panel~\textup{(a)} of \autoref{fig:particle-step}.
\end{remark}

These results identify the stationary Gibbs law and provide the gradient and
empirical-moment bounds needed for the continuous and particle couplings.

\section{Independent-projection dynamics}\label{sec:independent-projection}

For a product measure $\mu=\otimes_{i=1}^m\mu^i$, define
\[
b_i(x,\mu)=-\barV_i'(x,\mu^{-i}).
\]
For an $\R^m$-valued process $Y$, write
\[
\Law^{\otimes,-i}(Y_t)
:=\bigotimes_{k\ne i}\Law(Y_{t,k}).
\]
The independent-projection McKean--Vlasov equation is
\begin{equation}\label{eq:MV}
\dd Y_{t,i}
=-\barV_i'\bigl(Y_{t,i},\Law^{\otimes,-i}(Y_t)\bigr)\dd t
+\sqrt2\dd B_{t,i},
\qquad i\in[m].
\end{equation}
The product of the marginal laws is part of the definition: the coefficient does not use a possibly correlated joint law of $Y_t^{-i}$.

\begin{lemma}\label{lem:coefficient-lip}
Under \autoref{ass:smooth}, for $x,y\in\R$ and $\mu,\nu\in\cP_2(\R^{m-1})$,
\begin{equation*}
|\barV_i'(x,\mu)-\barV_i'(y,\nu)|
\le L\bigl(|x-y|+\cW(\mu,\nu)\bigr).
\end{equation*}
\end{lemma}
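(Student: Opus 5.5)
We would split the difference with the triangle inequality,
\[
|\barV_i'(x,\mu)-\barV_i'(y,\nu)|
\le
|\barV_i'(x,\mu)-\barV_i'(y,\mu)|
+
|\barV_i'(y,\mu)-\barV_i'(y,\nu)|,
\]
and bound the two terms separately, using the integral representation
\[
\barV_i'(x,\mu)=\int_{\R^{m-1}}\partial_iV(z[i\leftarrow x])\,\mu(\dd z^{-i})
\]
justified in the proof of \autoref{lem:conditional}.

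\emph{First term.} By \eqref{eq:conditional-smooth} and the mean value theorem in $x$, it is at most $L|x-y|$. Alternatively, one can use directly that $\partial_iV$ is $L$-Lipschitz. Indeed, for any $u,v\in\R^m$,
\[
|\partial_iV(u)-\partial_iV(v)|\le\norm{\grad V(u)-\grad V(v)}\le L\norm{u-v}
\]
by \autoref{ass:smooth}. The points $z[i\leftarrow x]$ and $z[i\leftarrow y]$ differ only in coordinate $i$, so integrating against $\mu$ gives the bound $L|x-y|$.

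\emph{Second term.} Fix $y$ and take an optimal coupling $\gamma\in\Pi(\mu,\nu)$ for $\cW(\mu,\nu)$; it exists since $\mu,\nu\in\cP_2(\R^{m-1})$. Then
\[
\barV_i'(y,\mu)-\barV_i'(y,\nu)
=
\int\bigl(\partial_iV(z[i\leftarrow y])-\partial_iV(w[i\leftarrow y])\bigr)\,\gamma(\dd z^{-i},\dd w^{-i}).
\]
The two arguments differ only in the coordinates other than $i$, with Euclidean distance $\norm{z^{-i}-w^{-i}}$. By the Lipschitz bound above, the absolute value is at most
\[
L\int\norm{z^{-i}-w^{-i}}\,\dd\gamma
\le
L\Bigl(\int\norm{z^{-i}-w^{-i}}^2\,\dd\gamma\Bigr)^{1/2}
=
L\,\cW(\mu,\nu),
\]
using Cauchy--Schwarz (Jensen).

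Adding the two bounds proves the lemma. There is no real obstacle. The only care needed is integrability, so that all integrals are finite and the splitting is legitimate: $|\partial_iV(u)|\le|\partial_iV(0)|+L\norm{u}$, so second moments suffice. One should also note that the bound uses the $\cW_1$-type quantity $\int\norm{z-w}\,\dd\gamma$, which is dominated by $\cW$.
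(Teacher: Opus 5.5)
Your proof is correct and is essentially the paper's argument. The paper takes a coupling $\gamma\in\Pi(\mu,\nu)$ and bounds $|\partial_iV(z[i\leftarrow x])-\partial_iV(w[i\leftarrow y])|\le L(|x-y|+\|z-w\|)$ in a single step, then integrates against $\gamma$, applies Cauchy--Schwarz, and minimizes over couplings. You instead split first with the triangle inequality and handle the $x$-increment and the measure increment separately, which gives the same bound with the same constant.
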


\begin{proof}
Let $\gamma\in\Pi(\mu,\nu)$.  For $(z,w)\in\R^{m-1}\times\R^{m-1}$, global smoothness gives
\[
|\partial_iV(z[i\leftarrow x])-\partial_iV(w[i\leftarrow y])|
\le L\sqrt{|x-y|^2+\|z-w\|^2}
\le L(|x-y|+\|z-w\|).
\]
Integrate with respect to $\gamma$, use Cauchy--Schwarz, and minimize over couplings.
\end{proof}

We begin with global well-posedness and preservation of the product structure.

\begin{theorem}\label{thm:wellposed}
Under \autoref{ass:smooth}, equation~\eqref{eq:MV} has a unique strong solution for every product initial law $\mu_0\in\cP_2(\R^m)$.  For every $t\ge0$, $\Law(Y_t)$ is a product measure.
\end{theorem}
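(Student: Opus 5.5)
The plan is a Picard/Sznitman fixed-point argument on the space of flows of product marginal laws, with the Lipschitz bound of \autoref{lem:coefficient-lip} supplying all the estimates.

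\textbf{Step 1: the frozen equation.} Fix $T>0$. Let $\mathcal C_T$ be the set of continuous curves $t\mapsto\mu_t=(\mu_t^1,\ldots,\mu_t^m)$ with $\mu_t^i\in\cP_2(\R)$, equipped with the distance
\[
D_T(\mu,\nu)=\sup_{t\le T}\Bigl(\sum_i\cW^2(\mu^i_t,\nu^i_t)\Bigr)^{1/2}.
\]
Given $\mu\in\mathcal C_T$, consider the decoupled SDE
\[
\dd Y_{t,i}=-\barV_i'\bigl(Y_{t,i},\textstyle\bigotimes_{k\ne i}\mu^k_t\bigr)\dd t+\sqrt2\,\dd B_{t,i},
\]
started from $Y_0\sim\mu_0$, where $Y_0$ has independent coordinates. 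By \autoref{lem:coefficient-lip}, the drift is globally $L$-Lipschitz in $x$. It is also continuous in $t$, since
\[
\cW\Bigl(\textstyle\bigotimes_{k\ne i}\mu_t^k,\bigotimes_{k\ne i}\mu_s^k\Bigr)\le D\text{-type continuity of }\mu.
\]
Classical theory therefore gives a unique strong solution with finite second moments. Define $\Phi(\mu)_t^i=\Law(Y_{t,i})$.

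Independence comes for free. Each coordinate equation involves only $Y_{0,i}$, $B_{\cdot,i}$ and a deterministic drift, so $Y_{\cdot,i}$ is a measurable functional of $(Y_{0,i},B_{\cdot,i})$. These pairs are independent across $i$, hence so are the coordinates, and $\Law(Y_t)=\bigotimes_i\Phi(\mu)^i_t$. Continuity of $t\mapsto\Phi(\mu)_t$ in $\cW$ follows from $L^2$-continuity of the paths.

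\textbf{Step 2: contraction.} Take $\mu,\nu\in\mathcal C_T$ and drive both frozen SDEs with the same $Y_0$ and the same Brownian motion. Subtracting the two equations and applying \autoref{lem:coefficient-lip} gives
\[
|Y_{t,i}-\tilde Y_{t,i}|\le L\int_0^t\Bigl(|Y_{s,i}-\tilde Y_{s,i}|+\cW\bigl(\textstyle\bigotimes_{k\ne i}\mu^k_s,\bigotimes_{k\ne i}\nu^k_s\bigr)\Bigr)\dd s.
\]
Product additivity bounds the Wasserstein term by $\bigl(\sum_k\cW^2(\mu_s^k,\nu_s^k)\bigr)^{1/2}$. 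Squaring, summing over $i$, and applying Gronwall yields
\[
\sup_{t\le T}\sum_i\E|Y_{t,i}-\tilde Y_{t,i}|^2\le C(L,m,T)\,T^2 D_T(\mu,\nu)^2 .
\]
Thus $\Phi$ is a contraction for $T$ small, with the threshold depending only on $L$ and $m$. Alternatively, a weighted norm $\sup_t e^{-\lambda t}(\cdot)$ makes $\Phi$ a contraction on any $[0,T]$. Completeness of $\mathcal C_T$ follows from completeness of $(\cP_2(\R),\cW)$, so there is a unique fixed point, and its frozen SDE solution solves \eqref{eq:MV}. Because the step length is uniform, concatenating gives a global solution.

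\textbf{Step 3: uniqueness.} Let $Y$ be any strong solution of \eqref{eq:MV}. Its marginal flow $\mu_t^i=\Law(Y_{t,i})$ lies in $\mathcal C_T$, because finite second moments follow from the linear growth of the drift together with Gronwall. This flow is a fixed point of $\Phi$, since the coefficient in \eqref{eq:MV} uses only the product of marginals. Uniqueness of the fixed point, combined with pathwise uniqueness for the frozen Lipschitz SDE, gives uniqueness of $Y$. The product property at every $t$ was obtained in Step 1.

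\textbf{Main obstacle.} The delicate point is the definitional subtlety that the drift depends on $\Law^{\otimes,-i}$ rather than the joint law. The argument must therefore ensure both of the following:
\begin{itemize}
\item that the fixed point lives on marginal flows rather than on joint laws;
\item that uniqueness is claimed among all solutions, including hypothetical ones whose joint law is not a product.
\end{itemize}
Working with marginals throughout handles both: any solution's coordinates are driven by deterministic drifts, so they are automatically independent.
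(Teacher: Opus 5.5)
Your proposal is correct and follows essentially the same route as the paper: a fixed point of the frozen-law map on curves of marginal laws, synchronous coupling estimated through \autoref{lem:coefficient-lip}, product additivity and Gronwall giving a small-time contraction that is then concatenated, coordinate independence from independent initial data and Brownian drivers, and uniqueness because any solution's marginal flow is a fixed point, combined with pathwise uniqueness. The differences are cosmetic: the paper builds the condition $\mu^i(0)=\mu_0^i$ into the curve space, which you do not need since $\Phi$ always produces curves starting at $\mu_0$, and your loose ``$D$-type continuity'' line should simply read $\cW^2(\bigotimes_{k\ne i}\mu_t^k,\bigotimes_{k\ne i}\mu_s^k)=\sum_{k\ne i}\cW^2(\mu_t^k,\mu_s^k)\to0$.
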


\begin{proof}
A fixed-point argument also records preservation of independence; see
\cite{CD2018} for standard McKean--Vlasov well-posedness theory.  Write the prescribed product initial law as
\[
\mu_0=\bigotimes_{i=1}^m\mu_0^i.
\]
Fix $T>0$, and let
\[
\mathcal C_T(\mu_0)
=
\left\{
\boldsymbol\mu=(\mu^1,\ldots,\mu^m)
\in C\bigl([0,T];\cP_2(\R)^m\bigr):
\mu^i(0)=\mu_0^i,\ i\in[m]
\right\}.
\]
For $0\le t\le T$, set
\[
d_t^2(\boldsymbol\mu,\boldsymbol\nu)
=
\sup_{0\le r\le t}\sum_{i=1}^m\cW^2(\mu_r^i,\nu_r^i).
\]
Then $d_T$ is a metric on $\mathcal C_T(\mu_0)$, and this space is complete because $\cP_2(\R)$ is complete in $\cW$ and the prescribed-initial-value condition is closed.  For a deterministic curve $\boldsymbol\mu\in\mathcal C_T(\mu_0)$, solve the decoupled SDEs
\begin{equation}\label{eq:frozen-law-sde}
\dd X_{t,i}^{\boldsymbol\mu}
=-\barV_i'\left(X_{t,i}^{\boldsymbol\mu},\bigotimes_{k\ne i}\mu_t^k\right)\dd t
+\sqrt2\dd B_{t,i},
\qquad X_{0,i}^{\boldsymbol\mu}\sim\mu_0^i,
\end{equation}
using the prescribed independent initial variables and independent Brownian motions.  \autoref{lem:coefficient-lip} gives global Lipschitz continuity and linear growth, so these equations have unique strong solutions.  Define
\[
\Phi(\boldsymbol\mu)_t^i=\Law(X_{t,i}^{\boldsymbol\mu}).
\]
Couple $X^\mu$ and $X^\nu$ synchronously, using the same initial
variables and the same Brownian motions.  Set
\[
A_t
:=
\sup_{0\le r\le t}
\sum_{i=1}^m
\E\left|X_{r,i}^{\mu}-X_{r,i}^{\nu}\right|^2.
\]
By \autoref{lem:coefficient-lip}, Cauchy--Schwarz in time, and
additivity of $\cW^2$ for product measures,
\[
\begin{aligned}
\sum_{i=1}^m
\E\left|X_{t,i}^{\mu}-X_{t,i}^{\nu}\right|^2
&\le
2L^2t\int_0^t
\sum_{i=1}^m
\left[
\E\left|X_{s,i}^{\mu}-X_{s,i}^{\nu}\right|^2
+
\cW^2(\mu_s^{-i},\nu_s^{-i})
\right]\dd s \\
&\le
C_{m,L}t\int_0^t
\left[
A_s+d_s^2(\boldsymbol\mu,\boldsymbol\nu)
\right]\dd s.
\end{aligned}
\]
Consequently, for $t\le T$,
\[
A_t
\le
C_{m,L}T\int_0^t A_s\,\dd s
+
C_{m,L}Tt\,d_T^2(\boldsymbol\mu,\boldsymbol\nu).
\]
Gronwall's inequality gives
\[
A_T
\le
C_{m,L}T^2e^{C_{m,L}T^2}
d_T^2(\boldsymbol\mu,\boldsymbol\nu).
\]
Since the synchronous coupling is admissible,
\[
d_T^2\bigl(\Phi(\boldsymbol\mu),\Phi(\boldsymbol\nu)\bigr)
\le A_T.
\]
Therefore
\[
d_T^2\bigl(\Phi(\boldsymbol\mu),\Phi(\boldsymbol\nu)\bigr)
\le
C_{m,L}T^2e^{C_{m,L}T^2}
d_T^2(\boldsymbol\mu,\boldsymbol\nu),
\]
and $\Phi$ is a contraction for $T>0$ sufficiently small.
Iterating the construction on consecutive intervals of this length gives a unique global fixed point $\boldsymbol\mu$.

At the fixed point, the coordinates in \eqref{eq:frozen-law-sde} are functions of independent initial variables and independent Brownian motions with deterministic coefficient curves.  They are therefore independent, and
\[
\Law(X_t)=\bigotimes_{i=1}^m\mu_t^i.
\]
The resulting product process solves \eqref{eq:MV}. Conversely, the
marginal-law vector of any solution is a fixed point of $\Phi$, because the
coefficients in \eqref{eq:MV} depend only on the products of the marginal
laws. Uniqueness of the fixed point determines these marginal curves. With
the curves fixed, pathwise uniqueness for the decoupled equations gives
strong uniqueness. Product initial data and independent Brownian drivers then
preserve independence of the coordinates.
\end{proof}

A synchronous coupling gives the continuous-time stability estimate.

\begin{theorem}\label{thm:continuous}
Let $\mu_t$ and $\nu_t$ solve \eqref{eq:MV} with product initial laws.  Start them from the product of optimal marginal couplings and couple corresponding coordinates synchronously.  Denote the coupled processes by $X_t,Y_t$, and set
\begin{equation*}
\Delta(t)=\E\mathfrak d_\alpha(X_t,Y_t).
\end{equation*}
Then
\begin{equation}\label{eq:continuous-bound}
\cW^2(\mu_t,\nu_t)
\le
e^{-2\alpha t}\cW^2(\mu_0,\nu_0)
+2\int_0^te^{-2\alpha(t-s)}\Delta(s)\dd s.
\end{equation}
If $\mathfrak d_\alpha\le\beta$, then
\begin{equation}\label{eq:continuous-beta}
\cW^2(\mu_t,\nu_t)
\le
e^{-2\alpha t}\cW^2(\mu_0,\nu_0)
+\frac\beta\alpha(1-e^{-2\alpha t}).
\end{equation}
The same conclusions hold with $\nu_t=q^\star$ for any product law $q^\star\in\cP_2(\R^m)$ satisfying the coordinatewise Gibbs identities \eqref{eq:fixed}; in particular, they hold for every MFVI minimizer under the uniform-defect assumptions of \autoref{thm:existence}.
\end{theorem}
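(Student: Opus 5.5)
The plan is a synchronous coupling combined with Itô's formula on the squared distance; because the two processes share the same Brownian motions, the noise cancels. Let $(X_0,Y_0)$ be built coordinatewise from optimal couplings of $\mu_0^i$ and $\nu_0^i$, independently across $i$. By product additivity, $\E\norm{X_0-Y_0}^2=\cW^2(\mu_0,\nu_0)$. Drive coordinate $i$ of both processes with the same $B_{t,i}$. Then $X_t-Y_t$ has no martingale part, and
\[
\frac{\dd}{\dd t}\norm{X_t-Y_t}^2
=
-2\sum_{i=1}^m\bigl(\barV_i'(X_{t,i},\mu_t^{-i})-\barV_i'(Y_{t,i},\nu_t^{-i})\bigr)(X_{t,i}-Y_{t,i}).
\]
Since $\Law(X_t,Y_t)$ couples $\mu_t$ and $\nu_t$, and by \autoref{thm:wellposed} both laws are products, it suffices to show
\[
\frac{\dd}{\dd t}\E\norm{X_t-Y_t}^2\le-2\alpha\E\norm{X_t-Y_t}^2+2\Delta(t).
\]
Gronwall's inequality then gives \eqref{eq:continuous-bound}. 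Bounding $\Delta\le\beta$ and integrating $2\beta\int_0^te^{-2\alpha(t-s)}\dd s=\tfrac\beta\alpha(1-e^{-2\alpha t})$ gives \eqref{eq:continuous-beta}.

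The key step is identifying the projected drifts with full gradients at the coupled positions, in expectation. The pairs $(X_{t,k},Y_{t,k})$ are independent across $k$: each is a function of independent initial pairs and independent Brownian motions, with deterministic law curves. Hence, conditionally on $(X_{t,i},Y_{t,i})$, the vector $X_t^{-i}$ has law $\mu_t^{-i}$ and $Y_t^{-i}$ has law $\nu_t^{-i}$. Therefore
\[
\E\bigl[\barV_i'(X_{t,i},\mu_t^{-i})(X_{t,i}-Y_{t,i})\bigr]=\E\bigl[\partial_iV(X_t)(X_{t,i}-Y_{t,i})\bigr],
\]
and likewise for $Y$. Differentiation under the integral is justified by \autoref{lem:conditional}, and all expectations are finite by second moments and the linear growth of $\grad V$. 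Summing over $i$,
\[
\frac{\dd}{\dd t}\E\norm{X_t-Y_t}^2=-2\E\ip{\grad V(X_t)-\grad V(Y_t)}{X_t-Y_t}.
\]
The defect inequality $-\ip{\grad V(x)-\grad V(y)}{x-y}\le-\alpha\norm{x-y}^2+\defect_\alpha(x,y)$ then closes the estimate. This conditional-independence identification is the main obstacle. It is exactly where the product structure of the coupling matters, which is why the initial coupling is taken as a product of marginal couplings rather than a joint optimal coupling. To keep the argument rigorous I would work in integrated form, using absolute continuity of $t\mapsto\E\norm{X_t-Y_t}^2$, rather than differentiating pointwise.

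For the stationary case, take $\nu_t\equiv q^\star$. By \eqref{eq:fixed}, each $q^{\star,i}=Z_i^{-1}e^{-f_i}$ is invariant for the one-dimensional Langevin SDE with drift $-f_i'=-\barV_i'(\cdot,q^{\star,-i})$, which is globally Lipschitz with Gaussian-tailed Gibbs law. Hence the product process started from $q^\star$, with the frozen curve $\nu_t\equiv q^\star$, is a fixed point of $\Phi$ in \autoref{thm:wellposed}. By uniqueness it is the solution of \eqref{eq:MV}, so $\nu_t=q^\star$ for all $t$, and the argument above applies verbatim. Every MFVI minimizer satisfies these hypotheses by \autoref{thm:existence} and \autoref{prop:fixed}.
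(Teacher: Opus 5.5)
Your proposal is correct and follows essentially the same route as the paper. Both arguments use the synchronous coupling built from the product of optimal marginal couplings, use independence of the coordinate pairs to identify the projected drifts with conditional expectations of $\partial_iV$, apply the defect inequality and Gronwall, and obtain stationarity of $q^\star$ from invariance of each scalar Gibbs law. Your extra appeal to uniqueness of the fixed point of $\Phi$, to conclude that the solution started at $q^\star$ is $\nu_t\equiv q^\star$, makes explicit a step the paper leaves implicit.
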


\begin{proof}
By construction, the initial coordinate pairs are independent across $i$, and the synchronous Brownian drivers used for different coordinates are independent.  Since the marginal-law curves are deterministic by \autoref{thm:wellposed}, the coordinate pairs $(X_{t,i},Y_{t,i})$ therefore remain independent across $i$.  Let
\[
\mathcal F_{t,i}=\sigma(X_{t,i},Y_{t,i}).
\]
The independence of the remaining coordinate pairs implies
\begin{equation*}
\barV_i'(X_{t,i},\mu_t^{-i})-
\barV_i'(Y_{t,i},\nu_t^{-i})
=
\E[\partial_iV(X_t)-\partial_iV(Y_t)\mid\mathcal F_{t,i}].
\end{equation*}
The independence of the coordinate pairs is used in this conditional
expectation identity.  Conditional on $(X_{t,i},Y_{t,i})$, the random vectors $X_t^{-i}$ and
$Y_t^{-i}$ retain marginal laws $\mu_t^{-i}$ and $\nu_t^{-i}$,
respectively.  No independence between $X_t^{-i}$ and $Y_t^{-i}$ is required;
their joint conditional law is generally the product of the chosen
coordinate-pair couplings, not $\mu_t^{-i}\otimes\nu_t^{-i}$.
Since $X_{t,i}-Y_{t,i}$ is $\mathcal F_{t,i}$-measurable,
\[
\begin{aligned}
&\sum_{i=1}^m\E\left[(X_{t,i}-Y_{t,i})
\bigl(\barV_i'(X_{t,i},\mu_t^{-i})-
\barV_i'(Y_{t,i},\nu_t^{-i})\bigr)\right]\\
&\qquad=
\E\langle X_t-Y_t,\nabla V(X_t)-\nabla V(Y_t)\rangle\\
&\qquad\ge
\alpha\E\|X_t-Y_t\|^2-\Delta(t).
\end{aligned}
\]
The Brownian terms cancel under synchronous coupling, and It\^o's formula gives
\[
\frac{\dd}{\dd t}\E\|X_t-Y_t\|^2
\le-2\alpha\E\|X_t-Y_t\|^2+2\Delta(t).
\]
Variation of constants gives \eqref{eq:continuous-bound}. The initial product
coupling is optimal by additivity of squared Wasserstein distance, whereas the
later-time coupling is merely admissible; its mean-square cost therefore
bounds $\cW^2(\mu_t,\nu_t)$ from above. Under the uniform defect bound,
\eqref{eq:continuous-beta} follows.

If $q^\star$ satisfies the coordinatewise Gibbs identities, then
\[
q^{\star,i}(\dd x)\propto
\exp\{-\barV_i(x,q^{\star,-i})\}\,\dd x.
\]
When the joint law is $q^\star$, the drift of coordinate $i$ is
$-\barV_i'(\cdot,q^{\star,-i})$, the logarithmic derivative of the invariant density $q^{\star,i}$.  Each scalar Langevin equation therefore preserves its marginal $q^{\star,i}$.  Because the coordinates are driven by independent Brownian motions and the initial law is the product $q^\star$, their independence is preserved, and the joint product measure $q^\star$ is stationary for \eqref{eq:MV}.
\end{proof}

The same estimate controls the separation between distinct MFVI minimizers.

\begin{corollary}
\label{cor:minimizer-diameter}
Assume the uniform defect bound $\mathfrak d_\alpha\leq\beta$.  If
$q^\star$ and $\widetilde q^\star$ are two MFVI minimizers, then
\begin{equation}
\label{eq:minimizer-diameter}
\cW(q^\star,\widetilde q^\star)
\leq
\sqrt{\frac{\beta}{\alpha}}.
\end{equation}
In particular, $\beta=0$ implies uniqueness, while for $\beta>0$ the uniform
curvature defect controls the diameter of the entire minimizer set even
though it does not select one minimizer dynamically.
\end{corollary}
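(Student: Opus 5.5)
Both $q^\star$ and $\widetilde q^\star$ are MFVI minimizers, so by \autoref{thm:existence} and \autoref{prop:fixed} each is a product law in $\cP_2(\R^m)$ satisfying the coordinatewise Gibbs identities \eqref{eq:fixed}. The last assertion of \autoref{thm:continuous} then shows that each is stationary for the independent-projection dynamics \eqref{eq:MV}. The plan is to run \eqref{eq:MV} from both laws simultaneously and apply \eqref{eq:continuous-beta}.

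Take $\mu_0=q^\star$ and $\nu_0=\widetilde q^\star$, and use the coupling prescribed in \autoref{thm:continuous}: start from the product of optimal marginal couplings and drive corresponding coordinates synchronously. By \autoref{thm:wellposed} the solution laws are unique, so stationarity gives $\mu_t=q^\star$ and $\nu_t=\widetilde q^\star$ for every $t\geq0$. Estimate \eqref{eq:continuous-beta} then reads
\[
\cW^2(q^\star,\widetilde q^\star)
\leq
e^{-2\alpha t}\cW^2(q^\star,\widetilde q^\star)
+\frac{\beta}{\alpha}\bigl(1-e^{-2\alpha t}\bigr),
\qquad t\geq0.
\]
This inequality can be closed in either of two ways:
\begin{itemize}
\item since $\cW^2(q^\star,\widetilde q^\star)<\infty$ (both laws lie in $\cP_2$), let $t\to\infty$; the first term vanishes and the second tends to $\beta/\alpha$;
\item alternatively, fix any $t>0$, subtract $e^{-2\alpha t}\cW^2$ from both sides, and divide by $1-e^{-2\alpha t}>0$.
\end{itemize}
Either route gives $\cW^2(q^\star,\widetilde q^\star)\le\beta/\alpha$. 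Taking square roots yields \eqref{eq:minimizer-diameter}.

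For $\beta=0$ the bound gives $\cW(q^\star,\widetilde q^\star)=0$, so the two minimizers coincide and the minimizer is unique. For $\beta>0$, the two minimizers were arbitrary, so the bound controls the diameter of the whole minimizer set. It does not pick out a particular minimizer, because every minimizer is a stationary point of the flow.

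The only point needing care is that \autoref{thm:continuous} must be applied with both laws stationary at once. Its final clause allows replacing one law by a Gibbs fixed point, and I would note that the same reasoning applies to both marginal curves: uniqueness of the marginal-law curve from \autoref{thm:wellposed} identifies each curve with its constant stationary value. The rest is algebra.
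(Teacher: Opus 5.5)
Your proposal is correct and follows essentially the same route as the paper: it uses \autoref{prop:fixed} to obtain the Gibbs identities and hence stationarity for \eqref{eq:MV}, applies \eqref{eq:continuous-beta} with both law curves constant, and then closes the inequality at a fixed $t>0$ by cancelling the factor $1-e^{-2\alpha t}$. Your appeal to the uniqueness in \autoref{thm:wellposed}, which identifies both solution curves with their stationary values, and your alternative $t\to\infty$ argument just spell out steps the paper leaves implicit.
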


\begin{proof}
By \autoref{prop:fixed}, both minimizers satisfy the coordinatewise Gibbs
identities and hence are stationary for \eqref{eq:MV}.  Apply
\eqref{eq:continuous-beta} with
$\mu_t\equiv q^\star$ and
$\nu_t\equiv\widetilde q^\star$.  Since the left-hand side is independent of
$t$,
\[
\cW^2(q^\star,\widetilde q^\star)
\leq
e^{-2\alpha t}\cW^2(q^\star,\widetilde q^\star)
+\frac{\beta}{\alpha}(1-e^{-2\alpha t}).
\]
For any $t>0$, cancellation of the factor $1-e^{-2\alpha t}$ gives
\eqref{eq:minimizer-diameter}.
\end{proof}

Moving-window averages also control the convolution term in the continuous-time
estimate.

\begin{remark}
\label{rem:time-averaged-defect}
The exponential convolution in \eqref{eq:continuous-bound} is well suited to
mixing or ergodic estimates, but an ordinary long-time Ces\`aro average alone
does not control it because defects occurring near the terminal time receive
the largest weight.  A sufficient condition is a uniform moving-window bound: if, for some $\tau>0$ and $\overline\Delta<\infty$,
\[
\sup_{r\geq0}\int_r^{r+\tau}\Delta(s)\,\dd s
\leq
\tau\overline\Delta,
\]
then partitioning the past into intervals of length $\tau$ gives
\begin{equation}
\label{eq:moving-average-defect}
\int_0^t e^{-2\alpha(t-s)}\Delta(s)\,\dd s
\leq
\frac{\tau\overline\Delta}{1-e^{-2\alpha\tau}},
\qquad t\geq0.
\end{equation}
Any ergodic or mixing estimate that controls the defect uniformly on sliding
windows can therefore be combined with \autoref{thm:continuous}.
If $\Delta(s)\leq\overline\Delta$ pointwise, letting $\tau\downarrow0$ in
\eqref{eq:moving-average-defect} recovers the familiar residual
$\overline\Delta/(2\alpha)$ inside the convolution.
\end{remark}

\section{Finite-batch PAVI}\label{sec:finite-batch-pavi}

For $A\in\R^{m\times N}$, define the row empirical measures and their product by
\[
q_A^i=\frac1N\sum_{j=1}^N\delta_{A^{i,j}},
\qquad
q_A=\bigotimes_{i=1}^mq_A^i.
\]
A sample from $q_A$ is formed by drawing the row indices independently.  Drawing a common column index would sample the column empirical law, which is a different measure.

\begin{algorithm}[t]
\caption{Finite-batch PAVI}\label{alg:pavi}
\begin{algorithmic}[1]
\Require Potential $V$, initial array $X_0\in\R^{m\times N}$, step size $h>0$, batch size $B\ge1$, iterations $T$.
\For{$n=0,\ldots,T-1$}
  \State Draw $z_n^{:,1},\ldots,z_n^{:,B}$ independently from the product empirical law $q_{X_n}$.
  \For{$i=1,\ldots,m$}
    \State Define
    \[
    g_n^i(x)=\frac1B\sum_{b=1}^B
    \partial_iV(z_n^{1,b},\ldots,z_n^{i-1,b},x,z_n^{i+1,b},\ldots,z_n^{m,b}).
    \]
    \State Draw $\xi_n^{i,:}\sim\mathcal N(0,I_N)$, independently.
    \For{$j=1,\ldots,N$}
      \State $X_{n+1}^{i,j}\gets X_n^{i,j}-hg_n^i(X_n^{i,j})+\sqrt{2h}\,\xi_n^{i,j}$.
    \EndFor
  \EndFor
\EndFor
\State \Return $X_T$.
\end{algorithmic}
\end{algorithm}

A direct implementation has the following generic work and storage costs.

\begin{proposition}\label{prop:complexity}
Suppose one evaluation of a coordinate derivative $\partial_iV$ has cost $C_{\partial V}$.  A direct implementation of Algorithm~\ref{alg:pavi} uses
\[
O(mNB\,C_{\partial V})
\]
work and $O(m(N+B))$ storage per iteration, excluding the storage used internally by the derivative routine.  By contrast, direct evaluation of every full projected drift over the product empirical law uses $mN^m$ coordinate-derivative evaluations in the generic unstructured case.
\end{proposition}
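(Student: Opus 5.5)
The proof is a direct operation count of Algorithm~\ref{alg:pavi}, carried out one iteration at a time and split into its three stages.

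\emph{Sampling.} Drawing one sample from $q_{X_n}$ means picking $m$ independent uniform indices in $[N]$, one per row, and reading off the corresponding entries of $X_n$. This costs $O(m)$ per sample and $O(mB)$ for the whole batch, and storing $z_n^{:,1},\ldots,z_n^{:,B}$ takes $O(mB)$ memory.

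\emph{Drift evaluation and update.} For each row $i$ and particle $j$, evaluating $g_n^i(X_n^{i,j})$ takes $B$ calls to $\partial_iV$. Each call needs an argument vector: either the batch column $z_n^{:,b}$ with its $i$-th entry temporarily overwritten, or a copy of it, which costs $O(m)$. This overhead is absorbed into $C_{\partial V}$, since any routine that reads an $m$-vector has $C_{\partial V}\gtrsim 1$ and, in the generic case, $\gtrsim m$; I will state this convention explicitly. The drift evaluations therefore cost $O(mNB\,C_{\partial V})$ in total. Generating the Gaussians and performing the update costs $O(mN)$, which is dominated.

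\emph{Storage and the comparison.} Storage per iteration consists of the array $X_n$ (or $X_{n+1}$, which can be written in place row by row after the batch has been drawn), the batch, and one scratch vector of length $m$. This gives $O(m(N+B))$. For the comparison claim, note that $q_{X_n}^{-i}$ is a uniform measure on $N^{m-1}$ atoms, with multiplicities, in the generic case. So
\[
\barV_i'(x,q_{X_n}^{-i})
\]
requires $N^{m-1}$ derivative evaluations at each of the $N$ points $x=X_n^{i,j}$ and for each of the $m$ rows. Without exploitable structure, none of these evaluations can be reused, which gives $m\cdot N\cdot N^{m-1}=mN^m$ evaluations.

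The only delicate point is the $O(m)$ cost of forming each argument vector. I handle it by in-place overwriting of coordinate $i$ in $z_n^{:,b}$ and restoring it afterwards, or equivalently by folding the cost into $C_{\partial V}$ as described above. No analytical obstacle is expected.
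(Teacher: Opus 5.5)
Your proposal is correct and follows the paper's own argument. The paper's proof is exactly this count: $B$ coordinate derivatives for each of the $mN$ particle coordinates, $mN+mB$ stored numbers, and $N^{m-1}$ averaging terms for each $(i,j)$ in the full projected drift, which gives $mN^m$. Your extra bookkeeping (the $O(mB)$ sampling cost, the $O(mN)$ noise-and-update cost, and the $O(1)$ in-place overwrite of coordinate $i$ in each batch column) only makes explicit lower-order costs that the paper leaves implicit.
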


\begin{proof}
For each of the $mN$ particle coordinates, the batch estimator averages $B$ coordinate derivatives.  The particle array and product batch require $mN$ and $mB$ numbers, respectively.  For the full projected drift, fixing $(i,j)$ leaves $m-1$ empirical coordinates to average, producing $N^{m-1}$ terms; multiplication by $mN$ gives $mN^m$.
\end{proof}

Model structure can reduce this generic cost.

\begin{remark}
The count in \autoref{prop:complexity} is a generic upper bound.  Additive or low-rank interactions may permit the batch averages to be reduced to a small number of sufficient statistics.  The benchmark in Section~8 has this structure, and its specialized implementation costs $O(mN+mB+\operatorname{nnz}(J))$ per iteration.  The companion code repository \cite{NV2026} includes both the generic implementation and the specialized benchmark code.
\end{remark}

\section{Stationary-array coupling and the one-step estimate}\label{sec:stationary-array}

\paragraph{Standing initial-moment assumption.}
Throughout Sections~6--7, the initial array is assumed to satisfy
\begin{equation}\label{eq:standing-initial-moment}
\E\left[\frac1N\sum_{j=1}^N\|X_0^{:,j}\|^2\right]<\infty.
\end{equation}
This condition ensures finiteness of the coupling costs and conditional second
moments used below.  Under the uniform-defect assumptions, \autoref{lem:coercivity} and the moment estimates of Section~3 supply the corresponding moments for the stationary comparison array.

Fix a product probability law $q^\star\in\cP_6(\R^m)$ satisfying the stationary coordinatewise Gibbs identities
\begin{equation}\label{eq:stationary-fixed-point}
q^{\star,i}(\dd x)
=
\frac{e^{-\barV_i(x,q^{\star,-i})}}
{\int_\R e^{-\barV_i(y,q^{\star,-i})}\dd y}\,\dd x,
\qquad i\in[m].
\end{equation}
Set
\[
\Gamma_\star=\E_{q^\star}\|\nabla V(X)\|^2<\infty,
\]
which is finite because global smoothness gives at most linear growth of $\nabla V$.  Under the uniform-defect assumptions of \autoref{sec:variational-foundations}, every MFVI minimizer satisfies \eqref{eq:stationary-fixed-point} by \autoref{prop:fixed}, and Lemmas~\ref{lem:moments}--\ref{lem:gradient-moment} provide the required moments and the explicit bound \eqref{eq:Gamma}.

All random variables below are defined on a common filtered probability space satisfying the usual conditions.  The batch samples used by PAVI are independent of the Brownian motions and of the initial stationary array.

\subsection{Construction of the stationary array}

Let $B^{i,j}$, $(i,j)\in[m]\times[N]$, be independent standard Brownian motions.  Run Algorithm~\ref{alg:pavi} with
\[
\xi_n^{i,j}=
\frac{B_{(n+1)h}^{i,j}-B_{nh}^{i,j}}{\sqrt h}.
\]
Let $Y_0$ have independent columns with law $q^\star$, independently of $X_0$ and $B$.

At time $nh$, choose for every row $i$ an optimal matching permutation $\tau_{n,i}$ such that
\begin{equation}\label{eq:rowmatch}
\cW^2(q_{X_n}^i,q_{Y_{nh}}^i)
=
\frac1N\sum_{j=1}^N
|X_n^{i,j}-Y_{nh}^{i,\tau_{n,i}(j)}|^2.
\end{equation}
In one dimension this permutation is obtained by sorting both rows.  We resolve ties using the original particle labels, which makes $\tau_{n,i}$ a measurable function of the two rows.

On $[nh,(n+1)h]$, define
\begin{equation}\label{eq:Ycoupled}
\dd Y_t^{i,j}
=-\barV_i'(Y_t^{i,j},q^{\star,-i})\dd t
+\sqrt2\dd B_t^{i,\tau_{n,i}^{-1}(j)}.
\end{equation}
Hence the matched particle $Y^{i,\tau_{n,i}(j)}$ uses the same Brownian
increment as $X^{i,j}$ during the current time step.

The predictable relabeling preserves stationarity of the comparison array.

\begin{lemma}\label{lem:stationary-array}
For every $t\ge0$, the $N$ columns of $Y_t$ are independent with common law $q^\star$.  Consequently,
\begin{equation}\label{eq:stationary-empirical-error}
\E \cW^2(q_{Y_t},q^\star)=\varepsilon_N^2(q^\star).
\end{equation}
\end{lemma}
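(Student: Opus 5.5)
We argue by induction over the time steps $[nh,(n+1)h]$. The key point is that on each step the relabeling $\tau_{n,i}$ is $\mathcal F_{nh}$-measurable, while the Brownian increments after time $nh$ are independent of $\mathcal F_{nh}$. Here $\mathcal F_{nh}$ is generated by $X_0,Y_0$, the Brownian motions up to time $nh$, and the batches $z_0,\dots,z_{n-1}$.

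The induction hypothesis at time $nh$ is the following: the array $Y_{nh}$ has independent columns with common law $q^\star$, and $Y_{nh}$ is independent of the future increments $(B_{t}-B_{nh})_{t\ge nh}$. At $n=0$ this holds by construction of $Y_0$.

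For the inductive step, fix $n$ and condition on $\mathcal F_{nh}$. For each $(i,j)$, the process
\[
W_t^{i,j}:=B_{nh+t}^{i,\tau_{n,i}^{-1}(j)}-B_{nh}^{i,\tau_{n,i}^{-1}(j)},\qquad t\in[0,h],
\]
is a relabeling, within row $i$, of the independent Brownian increments by a permutation that is fixed given $\mathcal F_{nh}$. Conditionally on $\mathcal F_{nh}$, the family $(W^{i,j})_{i,j}$ therefore consists of independent standard Brownian motions. Since this conditional law does not depend on the conditioning, the family is also independent of $\mathcal F_{nh}$, and in particular of $Y_{nh}$.

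On $[nh,(n+1)h]$, each entry $Y^{i,j}$ solves the scalar SDE \eqref{eq:Ycoupled}. Its drift $-\barV_i'(\cdot,q^{\star,-i})$ is deterministic and globally Lipschitz by \autoref{lem:coefficient-lip}, and it is driven by $W^{i,j}$. Hence $Y_{t}^{i,j}=\Phi^i_{t-nh}(Y_{nh}^{i,j},W^{i,j})$ for a measurable solution map $\Phi^i$. The entries $Y_{nh}^{i,j}$ are mutually independent: within a column this follows from the product law $q^\star$, and across columns from column independence. They are also independent of the $W$'s. Consequently the pairs $(Y_{nh}^{i,j},W^{i,j})$ are independent over all $(i,j)$, and so are the entries $Y_t^{i,j}$.

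It remains to identify the marginal laws. By \eqref{eq:stationary-fixed-point}, the drift is the logarithmic derivative of the density of $q^{\star,i}$. The scalar Langevin diffusion therefore preserves $q^{\star,i}$, exactly as in the final paragraph of the proof of \autoref{thm:continuous}. Thus $Y_t^{i,j}\sim q^{\star,i}$, and all entries are independent, so the columns of $Y_t$ are i.i.d.\ with law $q^\star$. To close the induction, note that $Y_{(n+1)h}$ is a function of $\mathcal F_{nh}$-data and of increments on $[nh,(n+1)h]$. It is therefore independent of the increments after $(n+1)h$.

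The identity \eqref{eq:stationary-empirical-error} then follows directly from the definition of $\varepsilon_N^2(q^\star)$, because $q_{Y_t}$ has the same law as $q_Y$.

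The main obstacle is the conditional-independence step. The permutations $\tau_{n,i}$ depend on $Y_{nh}$ itself, so we must check that the relabeled drivers remain Brownian and independent of $Y_{nh}$. The predictability of $\tau_{n,i}$, together with the fact that a deterministic permutation of i.i.d.\ Brownian increments is again i.i.d., settles this. Measurability of $\tau_{n,i}$ is ensured by the sorting and tie-breaking rule.
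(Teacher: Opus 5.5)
Your proof is correct and follows the same mechanism as the paper's: the predictable rowwise relabeling leaves the drivers as independent standard Brownian motions, independent of the past, so the $mN$ decoupled scalar Langevin equations, started from independent entries with laws $q^{\star,i}$, preserve both the marginals and their independence. The only difference is technical: the paper establishes this in one step by applying L\'evy's characterization to $\overline B_t=\int_0^tQ_s\,\mathrm dB_s$, whereas you check it step by step by conditioning on $\mathcal F_{nh}$ and using that each $\tau_{n,i}$ is a finitely-valued $\mathcal F_{nh}$-measurable permutation, which is more elementary but requires the induction over time steps.
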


\begin{proof}
Stack the $mN$ Brownian motions into a vector $B_t$.  On $(nh,(n+1)h]$, let $Q_n$ be the block-diagonal permutation matrix whose $i$-th block represents $\tau_{n,i}^{-1}$.  The matrix $Q_n$ is measurable with respect to the sigma-field at time $nh$, and it is orthogonal.  Define
\[
\overline B_t=\int_0^tQ_s\dd B_s,
\qquad Q_s=Q_n\quad\text{for }s\in(nh,(n+1)h].
\]
It is a continuous local martingale with
\[
\langle\overline B^a,\overline B^b\rangle_t
=\delta_{ab}t.
\]
L\'evy's characterization therefore shows that $\overline B$ is an
$mN$-dimensional standard Brownian motion with respect to the underlying
filtration. Equation~\eqref{eq:Ycoupled} is the corresponding collection of
scalar SDEs driven by the components of $\overline B$.

For fixed $i$, the invariant density of the scalar SDE is proportional to
\[
e^{-\barV_i(x,q^{\star,-i})}.
\]
By the assumed fixed-point identity \eqref{eq:stationary-fixed-point},
the invariant density of the $i$-th scalar equation is
$q^{\star,i}$. The $mN$ equations are decoupled, their driving Brownian components are independent, and the entries of $Y_0$ are independent with laws $q^{\star,i}$.  Hence the entries remain independent and stationary.  Their columns therefore have common product law $q^\star$, and \eqref{eq:stationary-empirical-error} follows from the definition of $\varepsilon_N$.
\end{proof}

Set
\begin{equation*}
\mathcal E_n=
\E \cW^2(q_{X_n},q_{Y_{nh}}).
\end{equation*}
By product additivity (\autoref{lem:product-additivity}) and the optimal rowwise matchings \eqref{eq:rowmatch},
\begin{equation*}
\mathcal E_n
=
\frac1N\sum_{i=1}^m\sum_{j=1}^N
\E|X_n^{i,j}-Y_{nh}^{i,\tau_{n,i}(j)}|^2.
\end{equation*}
This identity combines additivity of squared Wasserstein distance for product
measures with the one-dimensional optimal assignments in \eqref{eq:rowmatch}.

\subsection{Measure sensitivity of the projected drift}

For a product measure $\mu$, define
\[
B_\mu(z)
=
(\barV_1'(z_1,\mu^{-1}),\ldots,
\barV_m'(z_m,\mu^{-m})).
\]
For $z,u\in\R^m$, introduce the matrix
\begin{equation}\label{eq:hybrid-matrix}
[A_z(u)]_{ik}
=
\begin{cases}
\partial_{ik}^2V(u_1,\ldots,u_{i-1},z_i,u_{i+1},\ldots,u_m),&k\ne i,\\
0,&k=i,
\end{cases}
\end{equation}
and set
\begin{equation}\label{eq:kappa-definition}
\kappa=\sup_{z,u\in\R^m}\|A_z(u)\|_{\mathrm{op}}.
\end{equation}
Each row of $A_z(u)$ has Euclidean norm at most $L$, so
\begin{equation}\label{eq:kappa-bound}
\kappa\le\sup_{z,u}\|A_z(u)\|_{\mathrm F}\le\sqrt mL.
\end{equation}
We retain a separate constant $\kappa$ because interaction structure may make
it much smaller than $\sqrt mL$.

A Schur bound gives dimension-independent control of $\kappa$ under sparse
interaction structure.

\begin{remark}
\label{rem:kappa-structure}
The factor $\sqrt m$ in \eqref{eq:kappa-bound} comes from the universal
Frobenius estimate and need not be sharp.  Let
\[
K_\infty=\sup_{z,u}\|A_z(u)\|_\infty,
\qquad
K_1=\sup_{z,u}\|A_z(u)\|_1,
\]
where the two matrix norms are the maximal absolute row and column sums.
The Schur bound gives
\begin{equation*}
\kappa
\leq
\sqrt{K_1K_\infty}.
\end{equation*}
Consequently, if every coordinate interacts with at most $d$ other
coordinates and the relevant cross derivatives satisfy
$|\partial_{ik}^2V|\leq\ell$, with the same bound for the number of
interactions entering each coordinate, then
$K_1,K_\infty\leq d\ell$ and
\[
\kappa\leq d\ell,
\]
independently of $m$.  Low-rank and related structures are also covered by
\eqref{eq:kappa-definition} when their operator norms are uniformly controlled;
the benchmark in
\autoref{sec:benchmark} is an example.
\end{remark}

With $\kappa$ fixed, the projected drift is Lipschitz with respect to the product law.

\begin{lemma}\label{lem:measure-lip}
For all product measures $\mu,\nu\in\cP_2(\R^m)$ and all $z\in\R^m$,
\begin{equation*}
\|B_\mu(z)-B_\nu(z)\|
\le\kappa \cW(\mu,\nu).
\end{equation*}
\end{lemma}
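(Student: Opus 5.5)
We couple the two product laws coordinatewise and interpolate along straight lines, so that the derivative of the drift difference is exactly $A_z$ applied to the displacement. Fix $z\in\R^m$. For each $k\in[m]$ let $\gamma^k$ be an optimal coupling of $\mu^k$ and $\nu^k$ on $\R\times\R$. Take the product coupling $\gamma=\bigotimes_k\gamma^k$ and a random pair $(U,W)\sim\gamma$. Then $U\sim\mu$, $W\sim\nu$, the pairs $(U_k,W_k)$ are independent across $k$, and by product additivity of $\cW^2$,
\[
\E\|U-W\|^2=\cW^2(\mu,\nu).
\]
Because the drift at coordinate $i$ integrates only over the other coordinates, $U^{-i}\sim\mu^{-i}$ and $W^{-i}\sim\nu^{-i}$. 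Differentiation under the integral (as in \autoref{lem:conditional}) then gives
\[
\barV_i'(z_i,\mu^{-i})-\barV_i'(z_i,\nu^{-i})
=\E\bigl[\partial_iV(U[i\leftarrow z_i])-\partial_iV(W[i\leftarrow z_i])\bigr].
\]

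The next step is to write this difference as an integral along the segment $u_t=W+t(U-W)$, $t\in[0,1]$. The point $u_t[i\leftarrow z_i]$ is exactly the argument appearing in row $i$ of $A_z(u_t)$ in \eqref{eq:hybrid-matrix}. Coordinate $i$ is frozen at $z_i$, so the diagonal term drops out, and the fundamental theorem of calculus yields
\[
\partial_iV(U[i\leftarrow z_i])-\partial_iV(W[i\leftarrow z_i])
=\int_0^1\sum_{k\ne i}\partial_{ik}^2V(u_t[i\leftarrow z_i])(U_k-W_k)\,\dd t
=\int_0^1[A_z(u_t)(U-W)]_i\,\dd t.
\]
Stacking over $i$ gives the vector identity
\[
B_\mu(z)-B_\nu(z)=\E\int_0^1A_z(u_t)(U-W)\,\dd t.
\]

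To conclude, apply Jensen's inequality (the norm of an expectation is at most the expectation of the norm), then the bound $\|A_z(u_t)\|_{\mathrm{op}}\le\kappa$ from \eqref{eq:kappa-definition}, and finally Cauchy--Schwarz:
\[
\|B_\mu(z)-B_\nu(z)\|\le\kappa\,\E\|U-W\|\le\kappa\bigl(\E\|U-W\|^2\bigr)^{1/2}=\kappa\,\cW(\mu,\nu).
\]

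The only delicate point is justifying integrability and the interchange of expectation with the $t$-integral. This holds because $\nabla V$ has linear growth by \autoref{ass:smooth}, $\nabla^2V$ is bounded, and both measures lie in $\cP_2$, so Fubini applies. The key structural observation is using a single joint coupling $\gamma$ for all rows simultaneously. If each row $i$ were handled with its own coupling of $\mu^{-i}$ and $\nu^{-i}$, one could only bound each row separately and would lose the factor $\sqrt m$ through the Frobenius estimate \eqref{eq:kappa-bound}. With a common coupling, the operator-norm bound applies to the whole vector at once.
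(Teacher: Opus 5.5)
Your proof is correct and follows essentially the same route as the paper. Both arguments write $B_\mu(z)-B_\nu(z)$ as the expectation of $F_z(U)-F_z(W)$ under a single coupling of $\mu$ and $\nu$, where $F_z(u)=(\partial_iV(u[i\leftarrow z_i]))_i$ has Jacobian $A_z(u)$. They then bound this by $\kappa\|U-W\|$ via the fundamental theorem of calculus and finish with Jensen. The differences are cosmetic: you fix the product of optimal marginal couplings (optimal by product additivity) rather than taking an arbitrary $\gamma\in\Pi(\mu,\nu)$ and minimizing, and you apply Cauchy--Schwarz to $\E\|U-W\|$ instead of Jensen to the squared norm.
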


\begin{proof}
For fixed $z$, define $F_z:\R^m\to\R^m$ by
\[
[F_z(u)]_i
=
\partial_iV(u_1,\ldots,u_{i-1},z_i,u_{i+1},\ldots,u_m).
\]
Its Jacobian is $A_z(u)$, so the fundamental theorem of calculus and \eqref{eq:kappa-definition} give
\[
\|F_z(u)-F_z(v)\|\le\kappa\|u-v\|.
\]
If $\gamma\in\Pi(\mu,\nu)$, then
\[
B_\mu(z)-B_\nu(z)
=
\int(F_z(u)-F_z(v))\,\gamma(\dd u,\dd v).
\]
Jensen's inequality gives
\[
\|B_\mu(z)-B_\nu(z)\|^2
\le\kappa^2\int\|u-v\|^2\gamma(\dd u,\dd v).
\]
Minimizing over $\gamma$ proves the result.  The rows of $A_z(u)$ are evaluated
at different hybrid points, so the universal estimate \eqref{eq:kappa-bound}
is obtained rowwise through the Frobenius norm.
\end{proof}

\subsection{Stochastic-gradient error}

Define the full empirical-drift update
\begin{equation*}
\widetilde X_{n+1}^{i,j}
=
X_n^{i,j}
-h\barV_i'(X_n^{i,j},q_{X_n}^{-i})
+\sqrt2(B_{(n+1)h}^{i,j}-B_{nh}^{i,j}).
\end{equation*}
Let
\[
E_n^{i,j}=X_{n+1}^{i,j}-\widetilde X_{n+1}^{i,j}
=-h\bigl(g_n^i(X_n^{i,j})-
\barV_i'(X_n^{i,j},q_{X_n}^{-i})\bigr).
\]

The batch estimator is conditionally unbiased and satisfies the following
variance bound.

\begin{lemma}\label{lem:batch}
For every $n$,
\begin{equation}\label{eq:batch}
\frac1N\sum_{i,j}\E|E_n^{i,j}|^2
\le
\frac{2h^2}{B}
\left(L^2\mathcal E_n+\Gamma_\star\right).
\end{equation}
If $H$ is square-integrable and measurable with respect to $X_n,Y_{nh}$ and the Brownian increments on $[nh,(n+1)h]$, then
\begin{equation}\label{eq:batch-orthogonality}
\E[E_n^{i,j}H]=0.
\end{equation}
\end{lemma}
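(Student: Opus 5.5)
Conditional on $\mathcal G_n:=\sigma(X_n,Y_{nh},\{B_t-B_{nh}\}_{t\in[nh,(n+1)h]})$, the batch columns $z_n^{:,1},\ldots,z_n^{:,B}$ are i.i.d.\ with law $q_{X_n}$. This uses only the paper's standing assumption that the batch samples are independent of the Brownian motions and of the stationary array. For fixed $(i,j)$, set $x=X_n^{i,j}$, which is $\mathcal G_n$-measurable. Each summand $\partial_iV(z_n^{:,b}[i\leftarrow x])$ is then conditionally i.i.d.\ with conditional mean
\[
\int\partial_iV(u[i\leftarrow x])\,q_{X_n}^{-i}(\dd u^{-i})=\barV_i'(x,q_{X_n}^{-i}).
\]
Hence $\E[E_n^{i,j}\mid\mathcal G_n]=0$. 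Since $H$ is $\mathcal G_n$-measurable and square-integrable, the tower property gives \eqref{eq:batch-orthogonality}, provided $E_n^{i,j}$ is square-integrable. That follows from the first part of the lemma together with \eqref{eq:standing-initial-moment}.

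For the variance, the conditional variance of an average of $B$ conditionally i.i.d.\ terms is $B^{-1}$ times the single-term conditional variance. This is at most the conditional second moment centered at any $\mathcal G_n$-measurable constant. I will center at a quantity computed from the stationary array. Let $\sigma$ be a uniform random column selector, applied independently per row, so that $z=(X_n^{k,\sigma_k})_k\sim q_{X_n}$. Couple it with $w=(Y_{nh}^{k,\tau_{n,k}(\sigma_k)})_k\sim q_{Y_{nh}}$, matching through the optimal row permutations \eqref{eq:rowmatch}. With the center taken as $0$, split
\[
\partial_iV(z[i\leftarrow x])=\bigl[\partial_iV(z[i\leftarrow x])-\partial_iV(w[i\leftarrow y])\bigr]+\partial_iV(w[i\leftarrow y]),
\]
where $y=Y_{nh}^{i,\tau_{n,i}(j)}$ is the matched stationary particle. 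Applying $(a+b)^2\le 2a^2+2b^2$, the first bracket has squared norm at most $L^2(|x-y|^2+\sum_{k\ne i}|z_k-w_k|^2)$. That is too crude, because summing over $i$ would pick up a factor $m$.

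The better route is to bound the single-term variance by the second moment of the full vector. Summing over $i$ first, we have
\[
\sum_i\bigl|\partial_iV(z[i\leftarrow x_i])\bigr|^2.
\]
The hybrid points differ across $i$, which is the main obstacle. To avoid it, I would exploit that within the algorithm a single batch vector $z^{:,b}$ is shared across rows, and average over $j$ as well. Averaging $\frac1N\sum_j$ with $x_i=X_n^{i,j}$, and noting that $j$ uniform is equivalent to another independent row selector, makes the hybrid point $z[i\leftarrow X_n^{i,\sigma'_i}]$ distributed as a draw from $q_{X_n}$ in every row. The paired point $w[i\leftarrow Y^{i,\tau(\sigma'_i)}]$ is likewise a draw from $q_{Y_{nh}}$, i.e.\ a column of independent stationary entries with law $q^\star$.

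For each $i$ I will then bound $\E|\partial_iV(\text{point}_i)|^2$ by
\[
2\,\E|\partial_iV(\text{point}_i)-\partial_iV(\text{spoint}_i)|^2+2\,\E|\partial_iV(\text{spoint}_i)|^2 .
\]
Here $\text{spoint}_i$ denotes the paired draw from $q_{Y_{nh}}$, and it has law exactly $q^\star$ in expectation by \autoref{lem:stationary-array}. The second terms therefore sum to $2\Gamma_\star$.

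For the first terms I expect the sharper claim to come from writing them as a single vector difference. Taking all rows at a common hybrid structure, this becomes $\|\grad V(\zeta)-\grad V(\omega)\|^2\le L^2\|\zeta-\omega\|^2$ for coupled draws $\zeta\sim q_{X_n}$ and $\omega\sim q_{Y_{nh}}$ under the matched coupling. Its expectation is exactly $\mathcal E_n$ by the identity following \eqref{eq:rowmatch}.

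Justifying that the per-row hybrid points can be replaced by one common pair $(\zeta,\omega)$ is the key difficulty. It works because the expectation splits into separate per-$i$ terms, each depending only on the marginal law of the $i$-th hybrid pair, and that law equals the law of $(\zeta,\omega)$. Multiplying by $h^2/B$ then yields \eqref{eq:batch}.
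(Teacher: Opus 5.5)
Your proposal is correct and follows essentially the paper's route. You bound the conditional variance by $\frac{h^2}{B}$ times the conditional second moment, average over $j$ and sum over $i$, couple with the stationary array through the rowwise matchings to get $2L^2\mathcal E_n+2\Gamma_\star$, and obtain orthogonality by conditioning on $X_n$, $Y_{nh}$ and the Brownian increments. The only difference is ordering: the paper collapses $\frac1N\sum_{i,j}$ to $\E\int\|\nabla V\|^2\,\dd q_{X_n}$ before coupling a single pair $Z\sim q_{X_n}$, $U\sim q_{Y_{nh}}$, so the ``key difficulty'' you flag about replacing per-row hybrid points by one common pair never arises.
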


\begin{proof}
Conditional on $X_n$, the $B$ summands in $g_n^i(X_n^{i,j})$ are independent and have mean $\barV_i'(X_n^{i,j},q_{X_n}^{-i})$.  Hence
\[
\begin{aligned}
\E[|E_n^{i,j}|^2\mid X_n]
&\le\frac{h^2}{B}
\int
|\partial_iV(X_n^{i,j},z^{-i})|^2q_{X_n}^{-i}(\dd z^{-i}).
\end{aligned}
\]
Averaging over $j$ and summing over $i$ gives
\begin{equation}\label{eq:batch-secondmoment}
\frac1N\sum_{i,j}\E|E_n^{i,j}|^2
\le\frac{h^2}{B}
\E\int\|\nabla V(z)\|^2q_{X_n}(\dd z).
\end{equation}

Condition on the two arrays and couple $Z\sim q_{X_n}$ and $U\sim q_{Y_{nh}}$ optimally by taking the product of the rowwise matchings.  Then
\[
\E\|Z-U\|^2=\mathcal E_n.
\]
Conditional on $Y_{nh}$, the law of $U$ is $q_{Y_{nh}}$. Averaging over the
stationary array, each independently selected row entry has law
$q^{\star,i}$, and the row selections are independent. Hence the
unconditional law of $U$ is $q^\star$. Global smoothness gives
\[
\E\|\nabla V(Z)\|^2
\le2L^2\mathcal E_n+2\E_{q^\star}\|\nabla V(U)\|^2
=2L^2\mathcal E_n+2\Gamma_\star.
\]
Substitution in \eqref{eq:batch-secondmoment} proves \eqref{eq:batch}.

Finally, the fresh batch randomness at step $n$ is independent of
$\sigma(X_n,Y_{nh})$ and of the Brownian increments on
$[nh,(n+1)h]$.  Since the batch error is centered conditional on $X_n$,
conditioning first on $X_n,Y_{nh}$ and the Brownian increments gives
\eqref{eq:batch-orthogonality}.
\end{proof}

\subsection{Empirical Euler comparison and encountered defect}

The rowwise optimal matchings are individually optimal but do not by
themselves produce independent full-dimensional coordinate pairs.  To apply
the pointwise full-dimensional defect while preserving each rowwise matching
cost, we independently randomize the particle labels in the different rows.
Let $\sigma_1,\ldots,\sigma_m$ be independent uniform permutations of $[N]$, independent of all preceding random variables.  Define
\begin{equation*}
U^{i,j}=X_n^{i,\sigma_i(j)},
\qquad
Z^{i,j}=Y_{nh}^{i,\tau_{n,i}(\sigma_i(j))}.
\end{equation*}
Conditional on the arrays, the coordinate pairs $(U^{i,j},Z^{i,j})$ are independent over $i$, and each is the optimal empirical coupling specified by \eqref{eq:rowmatch}.  Define
\begin{equation}\label{eq:Delta-n}
\Delta_n
=
\frac1N\sum_{j=1}^N\E\mathfrak d_\alpha(U^{:,j},Z^{:,j}).
\end{equation}
Global smoothness implies
\[
\mathfrak d_\alpha(u,z)
\le(\alpha+L)\|u-z\|^2,
\]
so $\Delta_n<\infty$ whenever $\mathcal E_n<\infty$.  Under $\mathfrak d_\alpha\le\beta$, one has $\Delta_n\le\beta$.

We compare one full-drift Euler step with the stationary array.

\begin{lemma}\label{lem:Euler-comparison}
Assume $h\le\alpha/L^2$, and put
\[
\begin{aligned}
U_+^{i,j}
&=X_n^{i,j}-h\barV_i'(X_n^{i,j},q_{X_n}^{-i}),\\
Z_+^{i,j}
&=Y_{nh}^{i,\tau_{n,i}(j)}
-h\barV_i'(Y_{nh}^{i,\tau_{n,i}(j)},q_{Y_{nh}}^{-i}).
\end{aligned}
\]
Then
\begin{equation}\label{eq:Euler-comparison}
\frac1N\sum_{i,j}\E|U_+^{i,j}-Z_+^{i,j}|^2
\le
(1-\alpha h)\mathcal E_n+2h\Delta_n.
\end{equation}
\end{lemma}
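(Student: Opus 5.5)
Our plan is to expand the square exactly as in the Euler identity \eqref{eq:euler-defect-identity}, but for the empirical projected drift rather than for $\nabla V$ pointwise. Write $D^{i,j}=X_n^{i,j}-Y_{nh}^{i,\tau_{n,i}(j)}$ and $G^{i,j}=\barV_i'(X_n^{i,j},q_{X_n}^{-i})-\barV_i'(Y_{nh}^{i,\tau_{n,i}(j)},q_{Y_{nh}}^{-i})$. Then
\[
\frac1N\sum_{i,j}|U_+^{i,j}-Z_+^{i,j}|^2
=
\frac1N\sum_{i,j}|D^{i,j}|^2-\frac{2h}{N}\sum_{i,j}D^{i,j}G^{i,j}+\frac{h^2}{N}\sum_{i,j}|G^{i,j}|^2 .
\]
The first term has expectation $\mathcal E_n$ by \eqref{eq:rowmatch} and product additivity. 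The remaining work is to bound the cross term from below and the quadratic term from above.

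For the cross term we use the label randomization. Conditional on the arrays, $(U^{:,j},Z^{:,j})$ is a coupling of $q_{X_n}$ and $q_{Y_{nh}}$ with independent coordinate pairs, and each pair is the optimal row matching. Because the pair in row $i$ is independent of the other rows, the same argument as in \autoref{thm:continuous} gives
\[
\barV_i'(U^{i,j},q_{X_n}^{-i})-\barV_i'(Z^{i,j},q_{Y_{nh}}^{-i})=\E[\partial_iV(U^{:,j})-\partial_iV(Z^{:,j})\mid U^{i,j},Z^{i,j},\text{arrays}].
\]
Multiplying by $U^{i,j}-Z^{i,j}$, summing over $i$ and averaging over $j$ identifies $\frac1N\sum_{i,j}D^{i,j}G^{i,j}$, after relabeling by $\sigma_i$, with $\frac1N\sum_j\E[\langle U^{:,j}-Z^{:,j},\nabla V(U^{:,j})-\nabla V(Z^{:,j})\rangle\mid\text{arrays}]$. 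This is bounded below by $\alpha\|U-Z\|^2-\mathfrak d_\alpha(U,Z)$. Taking expectations, the cross term contributes at most $-2h\alpha\mathcal E_n+2h\Delta_n$.

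For the quadratic term we use the same coupling. By Jensen's inequality for the conditional expectation and the Lipschitz bound on $\nabla V$,
\[
\frac1N\sum_{i,j}|G^{i,j}|^2\le\frac1N\sum_j\E\bigl[\|\nabla V(U^{:,j})-\nabla V(Z^{:,j})\|^2\mid\text{arrays}\bigr]\le L^2\frac1N\sum_{i,j}|D^{i,j}|^2 .
\]
Its expectation is therefore at most $L^2\mathcal E_n$. Combining the three terms gives $(1-2\alpha h+L^2h^2)\mathcal E_n+2h\Delta_n$, and $h\le\alpha/L^2$ yields $L^2h^2\le\alpha h$, which is \eqref{eq:Euler-comparison}.

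The main obstacle is justifying the conditional-expectation identity. One must check that, conditional on the arrays and on row $i$'s pair, the other coordinates of $U^{:,j}$ and $Z^{:,j}$ have marginals exactly $q_{X_n}^{-i}$ and $q_{Y_{nh}}^{-i}$. This holds because $\sigma_k$ for $k\ne i$ is uniform and independent of $\sigma_i$, so $U^{k,j}$ is a uniform draw from row $k$. One must also check that the $j$-average of the randomized quantities equals the unrandomized sum, since $\sigma_i$ is a bijection. Integrability follows from the standing moment assumption \eqref{eq:standing-initial-moment} and \autoref{lem:stationary-array}.
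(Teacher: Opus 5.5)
Your proof is correct and follows the paper's argument: the independent label randomization $\sigma_i$ turns each empirical projected drift into a conditional expectation of $\partial_iV$ at full-dimensional points, and the pointwise defect and Lipschitz bounds then give $(1-2\alpha h+L^2h^2)\mathcal E_n+2h\Delta_n$. The only difference is organizational. The paper applies conditional Jensen to the whole Euler difference and then invokes \eqref{eq:euler-defect-identity}, whereas you expand the square first and use Jensen only on the gradient term. This gives the identical bound, because $U^{i,j}-Z^{i,j}$ is measurable with respect to the conditioning.
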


\begin{proof}
Define the permuted Euler outputs
\[
\begin{aligned}
\widehat U_+^{i,j}
&=U^{i,j}-h\barV_i'(U^{i,j},q_{X_n}^{-i}),\\
\widehat Z_+^{i,j}
&=Z^{i,j}-h\barV_i'(Z^{i,j},q_{Y_{nh}}^{-i}).
\end{aligned}
\]
For each $i$, summation over $j$ is invariant under the permutation $\sigma_i$; hence
\[
\frac1N\sum_{i,j}\E|\widehat U_+^{i,j}-\widehat Z_+^{i,j}|^2
=
\frac1N\sum_{i,j}\E|U_+^{i,j}-Z_+^{i,j}|^2.
\]
Fix $i,j$, condition on the two arrays and on $\sigma_i$, and average only over $(\sigma_k)_{k\ne i}$.  Then $U^{i,j}$ and $Z^{i,j}$ are fixed, while $U^{-i,j}$ has law $q_{X_n}^{-i}$ and $Z^{-i,j}$ has law $q_{Y_{nh}}^{-i}$.  Therefore
\[
\begin{aligned}
\widehat U_+^{i,j}-\widehat Z_+^{i,j}
&=
\E\left[
U^{i,j}-h\partial_iV(U^{:,j})
-Z^{i,j}+h\partial_iV(Z^{:,j})
\,\bigm|\,
X_n,Y_{nh},\sigma_i
\right].
\end{aligned}
\]
Conditional Jensen's inequality (in the form $|\E[H\mid\mathcal F]|^2\le\E[|H|^2\mid\mathcal F]$), followed by summation over $i,j$, gives
\[
\begin{aligned}
\frac1N\sum_{i,j}\E|U_+^{i,j}-Z_+^{i,j}|^2
&\le
\frac1N\sum_{j=1}^N
\E\|U^{:,j}-h\nabla V(U^{:,j})
-Z^{:,j}+h\nabla V(Z^{:,j})\|^2.
\end{aligned}
\]
The pointwise Euler identity \eqref{eq:euler-defect-identity} yields
\[
\|u-h\nabla V(u)-z+h\nabla V(z)\|^2
\le(1-2\alpha h+L^2h^2)\|u-z\|^2
+2h\mathfrak d_\alpha(u,z).
\]
The step-size assumption makes the first coefficient at most $1-\alpha h$.  Taking expectations and averaging over $j$, the contraction term is
\[
(1-\alpha h)\frac1N\sum_{j=1}^N
\E\|U^{:,j}-Z^{:,j}\|^2
=(1-\alpha h)\mathcal E_n,
\]
where the equality follows from the rowwise optimal matchings.  Likewise, by definition \eqref{eq:Delta-n}, the defect contribution is
\[
2h\frac1N\sum_{j=1}^N
\E\mathfrak d_\alpha(U^{:,j},Z^{:,j})
=2h\Delta_n.
\]
Equation~\eqref{eq:Euler-comparison} follows.
\end{proof}

\subsection{Local diffusion remainder}

For every matched coordinate, define
\begin{equation*}
A^{i,j}
=
\int_{nh}^{(n+1)h}
\left[
\barV_i'(Y_t^{i,\tau_{n,i}(j)},q^{\star,-i})
-
\barV_i'(Y_{nh}^{i,\tau_{n,i}(j)},q^{\star,-i})
\right]\dd t.
\end{equation*}

The remaining local error comes from replacing the stationary diffusion by one Euler step.

\begin{lemma}\label{lem:local}
For every $(i,j)$,
\begin{equation}\label{eq:local-one}
\E|A^{i,j}|^2
\le
\frac23L^2h^4\E_{q^\star}|\partial_iV|^2+2L^2h^3.
\end{equation}
Consequently,
\begin{equation}\label{eq:local-sum}
\frac1N\sum_{i,j}\E|A^{i,j}|^2
\le
\frac23L^2h^4\Gamma_\star+2mL^2h^3.
\end{equation}
\end{lemma}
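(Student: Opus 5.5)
Write $Y_t=Y^{i,\tau_{n,i}(j)}_t$ for the matched stationary particle and $f_i=\barV_i(\cdot,q^{\star,-i})$. Under the stated assumptions $|f_i''|\le L$ by \eqref{eq:conditional-smooth}, so
\[
|f_i'(Y_t)-f_i'(Y_{nh})|\le L|Y_t-Y_{nh}|.
\]
Cauchy--Schwarz in time then gives
\[
\E|A^{i,j}|^2\le h\int_{nh}^{(n+1)h}\E|f_i'(Y_t)-f_i'(Y_{nh})|^2\dd t
\le L^2h\int_0^h\E|Y_{nh+s}-Y_{nh}|^2\dd s.
\]
It therefore suffices to bound the mean-square increment of the scalar stationary diffusion over a time $s\le h$.

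From \eqref{eq:Ycoupled}, the increment is
\[
Y_{nh+s}-Y_{nh}=-\int_{nh}^{nh+s}f_i'(Y_r)\dd r+\sqrt2\,W_s,
\]
where $W$ is a standard one-dimensional Brownian increment. By \autoref{lem:stationary-array}, the driving process is a genuine Brownian motion, and for every $r$ the law of $Y_r$ is $q^{\star,i}$. We apply $(a+b)^2\le2a^2+2b^2$ together with Cauchy--Schwarz and stationarity:
\[
\E\Bigl|\int_{nh}^{nh+s}f_i'(Y_r)\dd r\Bigr|^2\le s\int_{nh}^{nh+s}\E|f_i'(Y_r)|^2\dd r=s^2\E_{q^{\star,i}}|f_i'|^2,
\qquad
\E|\sqrt2W_s|^2=2s.
\]
Hence
\[
\E|Y_{nh+s}-Y_{nh}|^2\le2s^2\E_{q^{\star,i}}|f_i'|^2+4s.
\]
Since $f_i'(x)=\E[\partial_iV(X)\mid X_i=x]$ under $q^\star$, conditional Jensen yields $\E_{q^{\star,i}}|f_i'|^2\le\E_{q^\star}|\partial_iV|^2$.

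Integrating in $s$ over $[0,h]$ gives
\[
L^2h\Bigl(\tfrac23h^3\E_{q^\star}|\partial_iV|^2+2h^2\Bigr)=\tfrac23L^2h^4\E_{q^\star}|\partial_iV|^2+2L^2h^3,
\]
which is exactly \eqref{eq:local-one}. Averaging over $j$ and summing over $i$ then produces $\Gamma_\star$ from the sum of the $\E|\partial_iV|^2$ terms and $2mL^2h^3$ from the diffusion terms, which is \eqref{eq:local-sum}.

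The one point needing care is that $\tau_{n,i}$ is random. We need the matched particle $Y^{i,\tau_{n,i}(j)}$ to still have marginal law $q^{\star,i}$ at every time in the step, and its driving noise on the step to be a standard Brownian increment. For the increment, $\tau_{n,i}$ is $\mathcal F_{nh}$-measurable, so this follows from \autoref{lem:stationary-array} and the strong Markov property.

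For the marginal law, stationarity of each individual particle does not by itself transfer to a randomly selected one. To sidestep this, we avoid the single-index bound and use the row average instead. Because $\tau_{n,i}$ is a permutation, $\sum_j|A^{i,j}|^2$ equals the same sum taken over the unpermuted labels $Y^{i,j'}$, and each fixed $Y^{i,j'}$ is stationary with law $q^{\star,i}$. This gives \eqref{eq:local-sum} rigorously. For \eqref{eq:local-one} at a fixed $(i,j)$, we condition on $\mathcal F_{nh}$ and use exchangeability of the row entries of $Y_{nh}$; this is the step we expect to require the most care.
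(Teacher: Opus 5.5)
Your core computation is the paper's: the $L$-Lipschitz bound on $\barV_i'(\cdot,q^{\star,-i})$, Cauchy--Schwarz in time, the increment bound $2s^2\E_{q^\star}|\partial_iV|^2+4s$ via conditional Jensen, and integration in $s$. You differ in how you handle the random label $\ell=\tau_{n,i}(j)$. The paper simply asserts that \autoref{lem:stationary-array} gives $Y^{i,\ell}_{nh+s}\sim q^{\star,i}$. You correctly note that stationarity of each fixed label does not transfer to a value-dependent label. You then prove \eqref{eq:local-sum} by summing over $j$ first and relabeling through the permutation $\tau_{n,i}$, so that each fixed label is a stationary diffusion driven by the component $\overline B^{i,\ell}$ of that lemma. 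This part is rigorous, and \eqref{eq:local-sum} is the only form used later, in \autoref{prop:recursion}.

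The gap is your plan for \eqref{eq:local-one} at a fixed $(i,j)$. Exchangeability cannot give it, and the per-index bound is in fact false in general.

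\textbf{Why exchangeability fails.} The increment of $B^{i,j}$ on $[nh,(n+1)h]$ is independent of $\mathcal F_{nh}$, so the Markov property gives $\E[|A^{i,j}|^2\mid\mathcal F_{nh}]=\phi_i(Y^{i,\tau_{n,i}(j)}_{nh})$ for a deterministic function $\phi_i$. Because $\tau_{n,i}$ is the sorting match, $Y^{i,\tau_{n,i}(j)}_{nh}$ is the order statistic of row $i$ of $Y_{nh}$ whose rank equals that of $X_n^{i,j}$ in its own row. Exchangeability makes an entry chosen independently of the values $q^{\star,i}$-distributed, for example $\tau_{n,i}(J)$ with $J$ uniform and independent. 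That is just the row average again. It does not make a rank-selected entry $q^{\star,i}$-distributed.

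\textbf{A counterexample.} Let $V(x)=\|x\|^2/2$, so $L=1$, $q^\star=\mathcal N(0,I_m)$ and $\E_{q^\star}|\partial_iV|^2=1$. Take $n=0$ and $h=1$; this $h$ is even admissible in \eqref{eq:stepsize} with $\alpha=L=1$ and $B\ge8$. Let $X_0$ be deterministic with $X_0^{i,1}$ the strict minimum of its row. Then $Y^{i,\tau_{0,i}(1)}_0=M:=\min_\ell Y^{i,\ell}_0$. The matched particle is an Ornstein--Uhlenbeck process driven by $B^{i,1}$, which is independent of $M$, and
\[
\E|A^{i,1}|^2=e^{-2}\,\E M^2+2\int_0^1(1-e^{-u})^2\dd u .
\]
This exceeds the right-hand side $8/3$ of \eqref{eq:local-one} once $\E M^2\ge18$. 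Since $\E M^2\sim2\log N$, that happens for all large $N$.

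So \eqref{eq:local-one} holds only for fixed labels, or equivalently after averaging over $j$, which is exactly what your row-average argument proves. The paper's one-line justification has precisely the defect you flagged. The fixed-$(i,j)$ statement should be restated for fixed labels rather than repaired.
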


\begin{proof}
Write $\ell=\tau_{n,i}(j)$.  By \eqref{eq:conditional-smooth} and Jensen's inequality,
\[
\E|A^{i,j}|^2
\le L^2h\int_0^h
\E|Y_{nh+t}^{i,\ell}-Y_{nh}^{i,\ell}|^2\dd t.
\]
By the coupling rule \eqref{eq:Ycoupled}, the matched particle $Y^{i,\ell}$ is driven on this time interval by $B^{i,j}$, and hence
\[
Y_{nh+t}^{i,\ell}-Y_{nh}^{i,\ell}
=-\int_0^t\barV_i'(Y_{nh+s}^{i,\ell},q^{\star,-i})\dd s
+\sqrt2(B_{nh+t}^{i,j}-B_{nh}^{i,j}).
\]
By Cauchy--Schwarz in time,
\[
\E|Y_{nh+t}^{i,\ell}-Y_{nh}^{i,\ell}|^2
\le
2t\int_0^t
\E|\barV_i'(Y_{nh+s}^{i,\ell},q^{\star,-i})|^2\,\dd s
+4t.
\]
\autoref{lem:stationary-array} implies that $Y_{nh+s}^{i,\ell}\sim q^{\star,i}$ for every $s\in[0,h]$, not only at the grid times.  Conditional Jensen's inequality therefore gives
\[
\E|\barV_i'(Y_{nh+s}^{i,\ell},q^{\star,-i})|^2
\le\E_{q^\star}|\partial_iV|^2,
\qquad 0\le s\le h.
\]
Therefore
\[
\E|Y_{nh+t}^{i,\ell}-Y_{nh}^{i,\ell}|^2
\le2t^2\E_{q^\star}|\partial_iV|^2+4t.
\]
Integration over $t\in[0,h]$ proves \eqref{eq:local-one}; summing proves \eqref{eq:local-sum}.
\end{proof}

\subsection{One-step recursion}

The preceding estimates yield the one-step recursion.

The recursion uses the step-size threshold
\begin{equation}\label{eq:stepsize}
0<h\le h_0(B)
:=
\min\left\{
\frac{\alpha}{L^2},
\frac{\alpha B}{8L^2},
\frac1\alpha
\right\}.
\end{equation}
The first restriction is required by \autoref{lem:Euler-comparison}, the second
absorbs the batch variance through
\[
\frac{2h^2L^2}{B}\le\frac{\alpha h}{4},
\]
and the third controls the Young-inequality factors in the stationary-drift and
local-diffusion remainders.  The $B$-independent condition
$h\le\alpha/(8L^2)$ is only the worst-case choice $B=1$.  Under a finite
uniform defect one has $\alpha\le L$, so for sufficiently large $B$ the
restriction $h\le\alpha/L^2$ is the active one.

\begin{proposition}\label{prop:recursion}
Assume \eqref{eq:stepsize}. Then
\begin{equation}\label{eq:recursion}
\begin{aligned}
\mathcal E_{n+1}
&\le
\left(1-\frac{\alpha h}{4}\right)\mathcal E_n
+3h\Delta_n
+\frac{10h\kappa^2}{\alpha}\varepsilon_N^2(q^\star)\\
&\quad+
\frac{2h^2}{B}\Gamma_\star
+\frac{20L^2h^2}{\alpha}(h\Gamma_\star+m).
\end{aligned}
\end{equation}
\end{proposition}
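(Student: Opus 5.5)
The plan is to bound $\mathcal E_{n+1}$ by the cost of a specific admissible coupling at time $(n+1)h$ and then expand the one-step difference. Since $\tau_{n+1,i}$ is optimal at step $n+1$, we have
\[
\mathcal E_{n+1}\le\frac1N\sum_{i,j}\E|X_{n+1}^{i,j}-Y_{(n+1)h}^{i,\tau_{n,i}(j)}|^2 ,
\]
because the old matching $\tau_{n,i}$ is an admissible assignment. For the matched pair, the Brownian increments cancel by \eqref{eq:Ycoupled}. Writing $\ell=\tau_{n,i}(j)$, we obtain
\[
X_{n+1}^{i,j}-Y_{(n+1)h}^{i,\ell}
=(U_+^{i,j}-Z_+^{i,j})+E_n^{i,j}+D^{i,j}+A^{i,j}.
\]
Here $U_+,Z_+$ are as in \autoref{lem:Euler-comparison} and $E_n$ is the batch error of \autoref{lem:batch}. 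The term
\[
D^{i,j}=h\bigl(\barV_i'(Y_{nh}^{i,\ell},q^{-i}_{Y_{nh}})-\barV_i'(Y_{nh}^{i,\ell},q^{\star,-i})\bigr)
\]
is the stationary-drift discrepancy, and $A^{i,j}$ is the local remainder of \autoref{lem:local}. The signs should be checked, but each term appears once.

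Next we expand the square. The batch term is orthogonal to everything else by \eqref{eq:batch-orthogonality}, since $U_+-Z_+$, $D$ and $A$ are measurable with respect to $X_n$, $Y_{nh}$ and the Brownian increments on the step; note that $Y_t$ on $[nh,(n+1)h]$ is driven only by these. Hence $E_n$ contributes only $\frac1N\sum\E|E_n|^2\le\frac{2h^2}{B}(L^2\mathcal E_n+\Gamma_\star)$, and the step-size condition absorbs $\frac{2h^2L^2}{B}\mathcal E_n\le\frac{\alpha h}{4}\mathcal E_n$. For the remaining three terms we apply Young's inequality in the form
\[
|a+b+c|^2\le(1+\tfrac{\alpha h}{2})|a|^2+\tfrac{4}{\alpha h}\bigl(|b|^2+|c|^2\bigr),
\]
up to the constants, which will be tuned. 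Using $h\le1/\alpha$ gives $(1+\frac{\alpha h}{2})(1-\alpha h)\le1-\frac{\alpha h}{2}$ and $(1+\frac{\alpha h}{2})\cdot 2h\Delta_n\le3h\Delta_n$. Subtracting the batch absorption leaves the factor $1-\frac{\alpha h}{4}$.

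The main obstacle is bounding $D$. By \autoref{lem:measure-lip} applied at the point $z$ whose $i$-th coordinate is $Y_{nh}^{i,\ell}$, we have
\[
\sum_i|D^{i,j}|^2\le h^2\,\|B_{q_{Y_{nh}}}(z)-B_{q^\star}(z)\|^2\le h^2\kappa^2\,\cW^2(q_{Y_{nh}},q^\star).
\]
The $z$ must be chosen row-consistently, for example column by column with $z_i=Y_{nh}^{i,\tau_{n,i}(j)}$. Because the lemma holds for all $z$ simultaneously, the pointwise bound survives this random choice. Taking expectations and using \autoref{lem:stationary-array} gives $\frac1N\sum\E|D|^2\le h^2\kappa^2\varepsilon_N^2(q^\star)$, and with the prefactor $4/(\alpha h)$ this becomes $O(h\kappa^2\varepsilon_N^2/\alpha)$, matching the term with constant $10$. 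The random empirical measure $q_{Y_{nh}}$ depends on $Y_{nh}$, which is exactly why the discrepancy does not vanish under conditioning, and the deterministic Lipschitz bound sidesteps this.

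Finally, \eqref{eq:local-sum} gives $\frac{4}{\alpha h}\cdot(\frac23L^2h^4\Gamma_\star+2mL^2h^3)\lesssim\frac{L^2h^2}{\alpha}(h\Gamma_\star+m)$. Collecting the terms with generous constants yields \eqref{eq:recursion}.
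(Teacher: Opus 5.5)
Your proposal is correct and follows the paper's proof essentially step by step. It uses the same admissible coupling via $\tau_{n,i}$, the same four-term decomposition (your $D$ is the paper's $-M$, so the sign you flagged is harmless), orthogonality of the batch error, one Young inequality, and Lemmas~\ref{lem:Euler-comparison}, \ref{lem:measure-lip} (with the column-wise choice of $z$), and \ref{lem:local}. The one slip is cosmetic: Young with parameter $\alpha h/2$ gives the factor $2+4/(\alpha h)\le 6/(\alpha h)$, not $4/(\alpha h)$, but this still fits under the constants $10$ and $20$ in \eqref{eq:recursion}. The paper differs only in organization: it first bounds the full-drift iterate $\widetilde X_{n+1}$, then adds the batch term by Pythagoras.
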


\begin{proof}
For the matched pair $(X_n^{i,j},Y_{nh}^{i,\tau_{n,i}(j)})$, set
\[
\begin{aligned}
C^{i,j}&=U_+^{i,j}-Z_+^{i,j},\\
M^{i,j}&=h\left[
\barV_i'(Y_{nh}^{i,\tau_{n,i}(j)},q^{\star,-i})
-
\barV_i'(Y_{nh}^{i,\tau_{n,i}(j)},q_{Y_{nh}}^{-i})
\right].
\end{aligned}
\]
The Brownian increments in $\widetilde X_{n+1}^{i,j}$ and $Y_{(n+1)h}^{i,\tau_{n,i}(j)}$ cancel by construction.  Adding and subtracting the empirical stationary drift gives the exact decomposition
\begin{equation}\label{eq:exact-decomposition}
\widetilde X_{n+1}^{i,j}
-Y_{(n+1)h}^{i,\tau_{n,i}(j)}
=C^{i,j}+M^{i,j}+A^{i,j}.
\end{equation}
The term $M^{i,j}$ accounts for the fact that, conditional on $Y_{nh}$, the
stationary array has product empirical law $q_{Y_{nh}}$ rather than the
deterministic law $q^\star$.

For $\eta>0$,
\[
\|C+M+A\|^2
\le(1+\eta h)\|C\|^2
+2\left(1+\frac1{\eta h}\right)(\|M\|^2+\|A\|^2).
\]
Choose $\eta=\alpha/4$.  \autoref{lem:Euler-comparison} and $\alpha h\le1$ give
\[
\begin{aligned}
(1+\alpha h/4)
\left[(1-\alpha h)\mathcal E_n+2h\Delta_n\right]
&\le
\left(1-\frac{\alpha h}{2}\right)\mathcal E_n+3h\Delta_n.
\end{aligned}
\]
By \autoref{lem:measure-lip},
\[
\frac1N\sum_{i,j}|M^{i,j}|^2
\le h^2\kappa^2\cW^2(q_{Y_{nh}},q^\star).
\]
Taking expectations and using \autoref{lem:stationary-array} gives
\[
\frac1N\sum_{i,j}\E|M^{i,j}|^2
\le h^2\kappa^2\varepsilon_N^2(q^\star).
\]
Moreover, $2(1+4/(\alpha h))\le10/(\alpha h)$, and \autoref{lem:local} gives
\[
\frac{10}{\alpha h}\frac1N\sum_{i,j}\E|A^{i,j}|^2
\le
\frac{20L^2h^2}{\alpha}(h\Gamma_\star+m).
\]
Combining these estimates with \eqref{eq:exact-decomposition} gives the full
empirical-drift recursion with coefficient $1-\alpha h/2$.

For the finite-batch iterate, \autoref{lem:batch} makes the batch error
orthogonal to the full-drift comparison error:
\[
H^{i,j}:=\widetilde X_{n+1}^{i,j}
-Y_{(n+1)h}^{i,\tau_{n,i}(j)}
\]
is measurable with respect to $X_n$, $Y_{nh}$, and the Brownian increments on $[nh,(n+1)h]$.  Therefore \eqref{eq:batch-orthogonality} gives
\[
\E[E_n^{i,j}H^{i,j}]=0.
\]
Consequently,
\[
\begin{aligned}
&\frac1N\sum_{i,j}
\E|X_{n+1}^{i,j}-Y_{(n+1)h}^{i,\tau_{n,i}(j)}|^2=
\frac1N\sum_{i,j}
\E|\widetilde X_{n+1}^{i,j}-Y_{(n+1)h}^{i,\tau_{n,i}(j)}|^2
+
\frac1N\sum_{i,j}\E|E_n^{i,j}|^2.
\end{aligned}
\]
The last term is at most
\[
\frac{2h^2}{B}(L^2\mathcal E_n+\Gamma_\star).
\]
By the batch-dependent restriction $h\le\alpha B/(8L^2)$,
\[
\frac{2h^2L^2}{B}\le\frac{\alpha h}{4}.
\]
After this absorption, the contraction coefficient becomes
$1-\alpha h/4$, and \eqref{eq:recursion} follows.  Finally, the matched pairs at time $(n+1)h$ define an admissible rowwise coupling of $q_{X_{n+1}}$ and $q_{Y_{(n+1)h}}$. Optimizing can only decrease the cost, so the left-hand side bounds $\mathcal E_{n+1}$.
\end{proof}

\section{Main stability bounds}\label{sec:main-stability}

The main theorem retains the defects encountered by the stationary coupling.
The uniform-defect bound follows by summing the resulting geometric series.

\begin{theorem}\label{thm:main-defect}
Assume global smoothness and the standing initial-moment condition \eqref{eq:standing-initial-moment}.  Let $q^\star\in\cP_6(\R^m)$ be a product law satisfying the stationary fixed-point identities \eqref{eq:stationary-fixed-point}.  If \eqref{eq:stepsize} holds, define
\[
\rho=1-\frac{\alpha h}{4}
\]
and
\[
R_{h,N,B}
=
\frac{10h\kappa^2}{\alpha}\varepsilon_N^2(q^\star)
+\frac{2h^2}{B}\Gamma_\star
+\frac{20L^2h^2}{\alpha}(h\Gamma_\star+m).
\]
Then, for every $n\ge1$,
\begin{equation}\label{eq:main-defect}
\mathcal E_n
\le
\rho^n\mathcal E_0
+3h\sum_{k=0}^{n-1}\rho^{n-1-k}\Delta_k
+\frac{1-\rho^n}{1-\rho}R_{h,N,B}.
\end{equation}
Consequently,
\begin{equation}\label{eq:main-defect-W2}
\begin{aligned}
\left(\E \cW^2(q_{X_n},q^\star)\right)^{1/2}
&\le
\rho^{n/2}
\left(\E \cW^2(q_{X_0},q^\star)\right)^{1/2}
+2\varepsilon_N(q^\star)+
\left(3h\sum_{k=0}^{n-1}\rho^{n-1-k}\Delta_k\right)^{1/2}\\
&\quad+
\frac{\sqrt{40}\,\kappa}{\alpha}\varepsilon_N(q^\star)
+\sqrt{\frac{8h\Gamma_\star}{\alpha B}} +
\sqrt{\frac{80L^2h}{\alpha^2}(h\Gamma_\star+m)}.
\end{aligned}
\end{equation}
\end{theorem}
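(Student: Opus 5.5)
The plan is to unroll the one-step recursion of \autoref{prop:recursion} and then translate the resulting bound on $\mathcal E_n$ into a bound on $\E\cW^2(q_{X_n},q^\star)$ by the triangle inequality through the stationary array.

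\textbf{Step 1: unroll the recursion.} Since \eqref{eq:stepsize} holds, \eqref{eq:recursion} reads
\[
\mathcal E_{k+1}\le\rho\,\mathcal E_k+3h\Delta_k+R_{h,N,B}
\]
for every $k\ge0$. Iterating from $k=0$ to $n-1$ by induction gives \eqref{eq:main-defect}; the constant terms sum to $R_{h,N,B}\sum_{k=0}^{n-1}\rho^{k}=R_{h,N,B}(1-\rho^n)/(1-\rho)$. Finiteness of every $\mathcal E_k$ and $\Delta_k$ follows inductively from \eqref{eq:standing-initial-moment}, the moments of $q^\star$, and $\Delta_k\le(\alpha+L)\cdot(\text{matching cost})$, so the recursion is never applied to infinite quantities.

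\textbf{Step 2: pass to $\cW(q_{X_n},q^\star)$.} By the triangle inequality for $\cW$ and Minkowski in $L^2(\Omega)$,
\[
\bigl(\E\cW^2(q_{X_n},q^\star)\bigr)^{1/2}\le\mathcal E_n^{1/2}+\varepsilon_N(q^\star),
\]
where the second term uses \autoref{lem:stationary-array}. For the initial term, the same argument in the other direction gives $\mathcal E_0^{1/2}\le(\E\cW^2(q_{X_0},q^\star))^{1/2}+\varepsilon_N(q^\star)$. Since $\rho^{n/2}\le1$, this is the source of the second copy of $\varepsilon_N$ and hence of the factor $2\varepsilon_N$. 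Next apply $\sqrt{a+b+c+\dots}\le\sqrt a+\sqrt b+\dots$ to \eqref{eq:main-defect}. Bound $(1-\rho^n)/(1-\rho)\le 4/(\alpha h)$ and split $R_{h,N,B}$ into its three pieces:
\[
\frac{4}{\alpha h}\cdot\frac{10h\kappa^2}{\alpha}\varepsilon_N^2=\frac{40\kappa^2}{\alpha^2}\varepsilon_N^2,\qquad
\frac{4}{\alpha h}\cdot\frac{2h^2\Gamma_\star}{B}=\frac{8h\Gamma_\star}{\alpha B},\qquad
\frac{4}{\alpha h}\cdot\frac{20L^2h^2}{\alpha}(h\Gamma_\star+m)=\frac{80L^2h}{\alpha^2}(h\Gamma_\star+m).
\]
Taking square roots reproduces exactly the constants in \eqref{eq:main-defect-W2}.

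\textbf{Main point to check.} There is no real obstacle, since all the analytic work is in \autoref{prop:recursion}. The only subtlety is that $\mathcal E_n$ compares $X_n$ with the random stationary array $Y_{nh}$ rather than with $q^\star$ itself. The step to watch is therefore the $L^2(\Omega)$ triangle inequality combined with \eqref{eq:stationary-empirical-error}, which holds at every time, including $t=0$. It should be applied at both ends, for $\mathcal E_0$ and for $\mathcal E_n$, and this is where the $2\varepsilon_N$ comes from.
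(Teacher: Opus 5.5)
Your proposal is correct and follows the paper's proof essentially step for step. You unroll the recursion of \autoref{prop:recursion}, bound $(1-\rho^n)/(1-\rho)\le 4/(\alpha h)$, and split the square root. You then pass from $\mathcal E_n$ to $\cW(q_{X_n},q^\star)$ by the $L^2$ triangle inequality through the stationary array at times $0$ and $n$, with $\rho^{n/2}\le1$ producing the factor $2\varepsilon_N(q^\star)$.
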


\begin{proof}
\autoref{prop:recursion} has the form
\[
\mathcal E_{n+1}\le\rho\mathcal E_n+3h\Delta_n+R_{h,N,B}.
\]
Repeated substitution gives
\[
\mathcal E_n
\le\rho^n\mathcal E_0
+3h\sum_{k=0}^{n-1}\rho^{n-1-k}\Delta_k
+R_{h,N,B}\sum_{k=0}^{n-1}\rho^k,
\]
which is \eqref{eq:main-defect}.

Since $1-\rho=\alpha h/4$, the stationary residual satisfies
\begin{equation*}
\begin{aligned}
\sqrt{\frac{R_{h,N,B}}{1-\rho}}
&=\sqrt{\frac{4}{\alpha h}R_{h,N,B}}\\
&\le
\frac{\sqrt{40}\,\kappa}{\alpha}\varepsilon_N(q^\star)
+\sqrt{\frac{8h\Gamma_\star}{\alpha B}}
+\sqrt{\frac{80L^2h}{\alpha^2}(h\Gamma_\star+m)}.
\end{aligned}
\end{equation*}
Using $\sqrt{a+b+c}\le\sqrt a+\sqrt b+\sqrt c$ again in
\eqref{eq:main-defect} yields
\[
\begin{aligned}
\sqrt{\mathcal E_n}
&\le\rho^{n/2}\sqrt{\mathcal E_0}
+
\left(3h\sum_{k=0}^{n-1}\rho^{n-1-k}\Delta_k\right)^{1/2} +
\frac{\sqrt{40}\,\kappa}{\alpha}\varepsilon_N(q^\star)
+\sqrt{\frac{8h\Gamma_\star}{\alpha B}}
+
\sqrt{\frac{80L^2h}{\alpha^2}(h\Gamma_\star+m)}.
\end{aligned}
\]
For each realization, the Wasserstein triangle inequality gives
\[
\cW(q_{X_n},q^\star)
\le \cW(q_{X_n},q_{Y_{nh}})+\cW(q_{Y_{nh}},q^\star).
\]
Minkowski's inequality in $L^2$, followed by \autoref{lem:stationary-array}, implies
\begin{equation}\label{eq:triangle-time-n}
\left(\E \cW^2(q_{X_n},q^\star)\right)^{1/2}
\le\sqrt{\mathcal E_n}+\varepsilon_N(q^\star).
\end{equation}
Similarly,
\begin{equation}\label{eq:triangle-time-zero}
\sqrt{\mathcal E_0}
\le
\left(\E \cW^2(q_{X_0},q^\star)\right)^{1/2}
+\varepsilon_N(q^\star).
\end{equation}
Substitution of \eqref{eq:triangle-time-zero} into \eqref{eq:triangle-time-n} gives \eqref{eq:main-defect-W2}; the factor $2\varepsilon_N$ uses $\rho^{n/2}\le1$.
\end{proof}

The accumulated-defect estimate can be localized to regions visited with high
probability.

\begin{remark}
\label{rem:localized-defect}
The accumulated-defect theorem does not require a global uniform defect
bound.  For any measurable set $G_n\subset\R^m\times\R^m$, the encountered
defect satisfies the exact decomposition
\begin{equation}
\label{eq:localized-defect}
\Delta_n
\leq
\sup_{(u,z)\in G_n}\defect_\alpha(u,z)
+
(\alpha+L)\,
\E\!\left[
\|U^{:,J}-Z^{:,J}\|^2
\mathbf 1_{\{(U^{:,J},Z^{:,J})\notin G_n\}}
\right],
\end{equation}
where $J$ is uniform on $[N]$ and independent of the construction.  Here we
used $\defect_\alpha(u,z)\leq(\alpha+L)\|u-z\|^2$.  Thus a local defect bound
on a region visited with high probability can be substituted into
\eqref{eq:main-defect}; one only needs a separate tail estimate for the
second term in \eqref{eq:localized-defect}.  Retaining $\Delta_n$ therefore permits local defect estimates supplemented by
a tail bound.
\end{remark}

Under a uniform defect bound, the accumulated term reduces to a geometric sum.

\begin{corollary}\label{cor:uniform}
Assume the uniform defect condition $\mathfrak d_\alpha\le\beta$, and let $q^\star$ be any MFVI minimizer.  Then
\begin{equation}\label{eq:main-uniform}
\begin{aligned}
\left(\E \cW^2(q_{X_n},q^\star)\right)^{1/2}
&\le
\rho^{n/2}
\left(\E \cW^2(q_{X_0},q^\star)\right)^{1/2}+
\left(2+\frac{\sqrt{40}\,\kappa}{\alpha}\right)
\varepsilon_N(q^\star)
+\sqrt{\frac{12\beta}{\alpha}}\\
&\quad+
\sqrt{\frac{8h\Gamma_\star}{\alpha B}}
+
\sqrt{\frac{80L^2h}{\alpha^2}(h\Gamma_\star+m)}.
\end{aligned}
\end{equation}
Using \eqref{eq:kappa-bound}, \eqref{eq:Gamma}, and \eqref{eq:epsN}, every term on the right-hand side is explicit in $(\alpha,\beta,L,m,h,N,B)$.
\end{corollary}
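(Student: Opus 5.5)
The corollary follows from \autoref{thm:main-defect} once its hypotheses are checked for an MFVI minimizer and the accumulated defect is bounded by a geometric series. First, we verify the hypotheses. By \autoref{prop:fixed}, every MFVI minimizer $q^\star$ satisfies the stationary fixed-point identities \eqref{eq:stationary-fixed-point}. By \autoref{lem:moments}, in particular \eqref{eq:moment6}, each marginal $q^{\star,i}$ has a finite sixth moment about $a_i$. Hence $q^\star\in\cP_6(\R^m)$, since a product law with finite sixth-moment marginals has a finite sixth moment. The step-size restriction \eqref{eq:stepsize} and the initial-moment condition \eqref{eq:standing-initial-moment} are standing assumptions, so \eqref{eq:main-defect-W2} applies.

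Second, we bound the defect term. Under $\mathfrak d_\alpha\le\beta$, the definition \eqref{eq:Delta-n} gives $\Delta_k\le\beta$ for every $k$. Since $1-\rho=\alpha h/4$,
\[
3h\sum_{k=0}^{n-1}\rho^{n-1-k}\Delta_k
\le 3h\beta\,\frac{1-\rho^n}{1-\rho}
\le \frac{3h\beta}{\alpha h/4}=\frac{12\beta}{\alpha}.
\]
Taking square roots gives the term $\sqrt{12\beta/\alpha}$.

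Third, we collect the particle terms: the two $\varepsilon_N(q^\star)$ contributions in \eqref{eq:main-defect-W2} combine into $(2+\sqrt{40}\,\kappa/\alpha)\varepsilon_N(q^\star)$. All remaining terms carry over unchanged, which yields \eqref{eq:main-uniform}. For the explicitness claim, substitute $\kappa\le\sqrt m L$ from \eqref{eq:kappa-bound}, $\Gamma_\star\le mL^2(1+\beta)/\alpha$ from \eqref{eq:Gamma}, and the bound \eqref{eq:epsN} on $\varepsilon_N$.

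None of these steps is a real obstacle. The only point needing care is confirming the $\cP_6$ hypothesis for an arbitrary minimizer. This is why we route through \autoref{prop:fixed}: the fixed-point identities are what make the centered moment bounds of \autoref{lem:moments} available without log-concavity.
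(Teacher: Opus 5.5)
Your proposal is correct and matches the paper's proof: bound $\Delta_k\le\beta$, sum the geometric series using $1-\rho=\alpha h/4$ to get $12\beta/\alpha$, and substitute into \eqref{eq:main-defect-W2} of \autoref{thm:main-defect}, merging the two $\varepsilon_N(q^\star)$ terms. Your explicit check that every MFVI minimizer satisfies \eqref{eq:stationary-fixed-point} and lies in $\cP_6(\R^m)$, via \autoref{prop:fixed} and \autoref{lem:moments}, is correct and makes explicit a hypothesis the paper leaves implicit.
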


\begin{proof}
Since $\Delta_k\le\beta$,
\[
3h\sum_{k=0}^{n-1}\rho^{n-1-k}\Delta_k
\le
3h\beta\frac{1-\rho^n}{1-\rho}
\le\frac{12\beta}{\alpha}.
\]
The conclusion is derived by applying \autoref{thm:main-defect}.
\end{proof}

The terms in the uniform estimate have distinct sources.

\begin{remark}\label{rem:interpretation}
The noninitial terms in \eqref{eq:main-uniform} are product-empirical approximation, curvature defect, stochastic-gradient error, and Euler contributions from the local drift and Brownian increments.  The batch term vanishes as $B\to\infty$, while the displayed Euler term is $O(\sqrt h)$ for fixed model parameters.  When $\beta>0$, the theorem gives stability but does not imply exact convergence to a selected optimizer.  Nevertheless, \autoref{cor:minimizer-diameter} shows that all MFVI minimizers lie within $\sqrt{\beta/\alpha}$ of one another in $\cW$, so the nonuniqueness itself is quantitatively controlled by the same defect scale.
\end{remark}

For comparison, consider the strongly convex case.

\begin{remark}\label{rem:comparison-original}
If $\beta=0$, then $V$ is $\alpha$-strongly convex and the MFVI minimizer is unique.  Our general recursion gives the contraction factor
\[
(1-\alpha h/4)^{n/2},
\]
whereas \cite[Theorem~4.1]{DWZZ2026} obtains
$(1-\alpha h/2)^{n/2}$.  The weaker constant results from the Young inequalities used to absorb the
empirical stationary-drift discrepancy and the finite-batch term while
retaining nonzero encountered defects.  The proof in \cite{DWZZ2026} is specialized to the zero-defect setting and
retains a sharper contraction constant. The factor-of-two loss here affects
only numerical constants in the geometric transient and residual prefactors;
the orders in $N$, $B$, $h$, and the defect magnitude are unchanged.

There is also a difference in the particle term.  The general sixth-moment estimate used in \autoref{thm:main-defect} gives $\varepsilon_N=O(N^{-1/4})$.  Under $\beta=0$, strong log-concavity permits the sharper specialization
\[
\varepsilon_N(q^\star)
=O\!\left(\sqrt{\frac{m\log N}{\alpha N}}\right)
\]
from \autoref{rem:empirical-rates}.  These losses arise from using one argument for nonconvex marginals without
log-concavity; no optimality is claimed in the strongly convex subcase.
\end{remark}

The same estimates control the distance to the full minimizer set.

\begin{corollary}\label{cor:minimizer-set}
Under the assumptions of \autoref{cor:uniform}, let
\[
\mathcal M=
\operatorname*{argmin}_{q\in\cQ}\KL(q\|p),
\qquad
\operatorname{dist}_{\cW}(q,\mathcal M)
=\inf_{r\in\mathcal M}\cW(q,r),
\]
and define the uniform bounds
\[
\overline\Gamma
=
\frac{mL^2(1+\beta)}{\alpha},
\qquad
\overline\varepsilon_N
=
\frac{C\sqrt m}{\sqrt\alpha}
\bigl((1+\beta)(3+\beta)(5+\beta)\bigr)^{1/6}N^{-1/4},
\]
where $C$ is the universal constant in \eqref{eq:epsN}.  Then
\begin{equation}\label{eq:minimizer-set-bound}
\begin{aligned}
\left(\E\operatorname{dist}_{\cW}^2(q_{X_n},\mathcal M)\right)^{1/2}
&\le
\rho^{n/2}
\inf_{q^\star\in\mathcal M}
\left(\E \cW^2(q_{X_0},q^\star)\right)^{1/2}+
\left(2+\frac{\sqrt{40}\,\kappa}{\alpha}\right)\overline\varepsilon_N
+\sqrt{\frac{12\beta}{\alpha}}\\
&\quad+
\sqrt{\frac{8h\overline\Gamma}{\alpha B}}
+
\sqrt{\frac{80L^2h}{\alpha^2}(h\overline\Gamma+m)}.
\end{aligned}
\end{equation}
\end{corollary}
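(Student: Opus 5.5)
The plan is to apply \autoref{cor:uniform} to each minimizer separately and then take an infimum, replacing the minimizer-dependent quantities by the uniform bounds. Fix any $q^\star\in\mathcal M$. By \autoref{thm:existence} and \autoref{prop:fixed}, $q^\star$ is a product law satisfying \eqref{eq:stationary-fixed-point}. By \autoref{lem:moments}, it has finite sixth moments, so $q^\star\in\cP_6(\R^m)$. Hence \autoref{cor:uniform} applies to $q^\star$.

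Since $\operatorname{dist}_{\cW}(q_{X_n},\mathcal M)\le\cW(q_{X_n},q^\star)$ pointwise, we get
\[
\bigl(\E\operatorname{dist}_{\cW}^2(q_{X_n},\mathcal M)\bigr)^{1/2}\le\bigl(\E\cW^2(q_{X_n},q^\star)\bigr)^{1/2},
\]
which is bounded by the right side of \eqref{eq:main-uniform} for this $q^\star$.

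Next, every $q^\star$-dependent quantity except the initial distance is bounded uniformly over $\mathcal M$:
\begin{itemize}
\item $\Gamma_\star\le\overline\Gamma$ by \eqref{eq:Gamma}, whose constant depends only on $(m,L,\alpha,\beta)$;
\item $\varepsilon_N(q^\star)\le\overline\varepsilon_N$ by \eqref{eq:epsN}.
\end{itemize}
The key point is that these lemmas were proved for an arbitrary minimizer, with centering at arbitrary minimizers $a_i$ of $f_i$. The resulting constants involve none of $a_i$, $C_V$ or $C_{i,\mu}$. Moreover, $\kappa$, $\rho$ and the step-size condition \eqref{eq:stepsize} do not depend on $q^\star$. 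Every right-hand term in \eqref{eq:main-uniform} is monotone nondecreasing in $\Gamma_\star$ and $\varepsilon_N$, so the substitution is valid.

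This yields the bound \eqref{eq:minimizer-set-bound}, except that the initial term still involves the fixed $q^\star$. The left side does not depend on $q^\star$, so we take the infimum over $q^\star\in\mathcal M$ on the right. Only the first term varies, which gives exactly \eqref{eq:minimizer-set-bound}.

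The remaining point to check is measurability of $\operatorname{dist}_{\cW}(q_{X_n},\mathcal M)$. The map $q\mapsto\operatorname{dist}_{\cW}(q,\mathcal M)$ is $1$-Lipschitz in $\cW$, hence continuous. The map $X_n\mapsto q_{X_n}$ is continuous into $(\cP_2,\cW)$. Their composition is therefore a random variable, and the pointwise domination argument needs nothing further.

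I expect the main obstacle to be making sure that the uniformity claims are genuinely independent of the chosen minimizer. In particular, \autoref{lem:moments} must hold for every minimizer, and the standing initial-moment condition together with $q^\star\in\cP_6$ must be available for all $q^\star\in\mathcal M$ simultaneously. Both follow from the cited lemmas, since their hypotheses are only the uniform defect and smoothness assumptions.
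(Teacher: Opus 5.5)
Your proposal is correct and follows the paper's proof: apply \autoref{cor:uniform} to each fixed $q^\star\in\mathcal M$, replace $\Gamma_\star$ and $\varepsilon_N(q^\star)$ by the minimizer-independent bounds $\overline\Gamma$ and $\overline\varepsilon_N$, use $\operatorname{dist}_{\cW}(q_{X_n},\mathcal M)\le\cW(q_{X_n},q^\star)$, and take the infimum over $q^\star$. The only difference is the measurability step: the paper first proves that $\mathcal M$ is $\cW$-closed, whereas your observation that the distance to any nonempty set is $1$-Lipschitz already suffices without closedness.
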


\begin{proof}
We first note that $\mathcal M$ is closed in $\cW$.  If
$q_n\in\mathcal M$ and $q_n\to q$ in $\cW$, then
\autoref{lem:product-closed} shows that $q$ is a product measure, while lower
semicontinuity of relative entropy gives
\[
\KL(q\|p)\le\liminf_{n\to\infty}\KL(q_n\|p)
=\inf_{r\in\cQ}\KL(r\|p).
\]
Thus $q$ is a minimizer.  Its relative entropy is finite, so
$q\ll p\ll\mathcal L^m$ and its marginals are absolutely continuous; hence
$q\in\mathcal M$.  The distance to $\mathcal M$ is therefore measurable.

For every $q^\star\in\mathcal M$, \autoref{prop:fixed} gives the same
coordinatewise fixed-point identities.  The bounds in \autoref{lem:moments},
\eqref{eq:Gamma}, and \eqref{eq:epsN} depend only on
$(\alpha,\beta,L,m)$, so
\[
\Gamma_\star\le\overline\Gamma,
\qquad
\varepsilon_N(q^\star)\le\overline\varepsilon_N
\]
uniformly over $\mathcal M$.  Apply \autoref{cor:uniform} to a fixed
$q^\star\in\mathcal M$, use
$\operatorname{dist}_{\cW}(q,\mathcal M)\le\cW(q,q^\star)$, and take
the infimum over $q^\star$ to obtain \eqref{eq:minimizer-set-bound}.
\end{proof}

\section{An arbitrary-dimensional nonconvex benchmark with known MFVI minimizer}\label{sec:benchmark}

The benchmark uses the following Pinsker-type uniqueness criterion, which does
not require global strong convexity.

\begin{proposition}
\label{prop:bounded-feature-uniqueness}
For $i\in[m]$, let
$\pi_i(\dd x)=Z_i^{-1}e^{-\nu_i(x)}\dd x$ be a probability law and let $s_i:\R\to\R$ be bounded measurable functions satisfying
\[
\int s_i\,\dd\pi_i=0,
\qquad
\|s_i\|_\infty\leq M_i,\qquad M_i>0.
\]
Let $J=J^{\mathsf T}$ have zero diagonal and consider
\[
V(x)
=
\sum_{i=1}^m\nu_i(x_i)
+
\frac12 s(x)^{\mathsf T}Js(x),
\qquad
s(x)=(s_1(x_1),\ldots,s_m(x_m))^{\mathsf T}.
\]
Set $D=\operatorname{diag}(M_1,\ldots,M_m)$.  If
\begin{equation*}
\|DJD\|_{\mathrm{op}}<1,
\end{equation*}
then $\pi_1\otimes\cdots\otimes\pi_m$ is the unique MFVI minimizer.
\end{proposition}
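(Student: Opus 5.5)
The plan is to compare the MFVI objective at an arbitrary product law with its value at $\pi:=\pi_1\otimes\cdots\otimes\pi_m$, and to show that the excess is strictly positive unless $q=\pi$. First observe that the potential is bounded below relative to $\sum_i\nu_i$: since $s$ is bounded, $\bigl|\tfrac12 s^{\mathsf T}Js\bigr|\le\tfrac12\|DJD\|_{\mathrm{op}}\,m$ is a bounded perturbation. Hence $Z<\infty$, and $p$ is comparable to $\pi$ up to bounded densities. Moreover, $\KL(q\|p)<\infty$ if and only if every $\KL(q^i\|\pi_i)<\infty$.

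Fix a product law $q=\bigotimes_i q^i$ with finite objective. Using $\log p=-\sum_i\nu_i-\tfrac12 s^{\mathsf T}Js-\log Z$ and the product structure, write
\[
\KL(q\|p)=\sum_{i=1}^m\KL(q^i\|\pi_i)+\frac12\,r^{\mathsf T}Jr+\text{const},
\qquad r_i:=\int s_i\,\dd q^i .
\]
The expectation of the quadratic form factorizes into $r^{\mathsf T}Jr$ precisely because $J$ has zero diagonal and the coordinates are independent under $q$. The constant equals $\sum_i\log Z_i+\log Z$ (with appropriate signs) and does not depend on $q$. At $q=\pi$ we have $r=0$ by the centering assumption $\int s_i\,\dd\pi_i=0$, so the objective gap relative to $\pi$ is
\[
\mathcal G(q)=\sum_{i}\KL(q^i\|\pi_i)+\frac12 r^{\mathsf T}Jr .
\]

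The key step is a Pinsker-type lower bound on each marginal. Since $\int s_i\,\dd\pi_i=0$ and $\|s_i\|_\infty\le M_i$, the variational form of total variation gives $|r_i|=\bigl|\int s_i\,\dd(q^i-\pi_i)\bigr|\le 2M_i\|q^i-\pi_i\|_{\mathrm{TV}}$, with $\|\cdot\|_{\mathrm{TV}}=\sup_A|\cdot(A)|$. Pinsker's inequality $\|q^i-\pi_i\|_{\mathrm{TV}}^2\le\tfrac12\KL(q^i\|\pi_i)$ then yields $\KL(q^i\|\pi_i)\ge r_i^2/(2M_i^2)$. For a sharper route, one can instead use the sub-Gaussian (Hoeffding) bound for a centered variable with range in $[-M_i,M_i]$ together with the Donsker–Varadhan formula. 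This gives $\int s_i\,\dd q^i\le \lambda^{-1}\KL(q^i\|\pi_i)+\lambda M_i^2/2$, and optimizing over $\lambda$ again produces $\KL(q^i\|\pi_i)\ge r_i^2/(2M_i^2)$. Setting $w=D^{-1}r$, we get
\[
\mathcal G(q)\ge\frac12\|w\|^2+\frac12 w^{\mathsf T}(DJD)w\ge\frac12\bigl(1-\|DJD\|_{\mathrm{op}}\bigr)\|w\|^2\ge0 .
\]
So $\pi$ is a global minimizer. It lies in $\cQ$ because each $\pi_i$ has a density.

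For uniqueness, suppose $\mathcal G(q)=0$. The last display forces $w=0$, hence $r=0$, and then $\mathcal G(q)=\sum_i\KL(q^i\|\pi_i)=0$, so $q^i=\pi_i$ for every $i$. The step needing the most care is the constant in the Pinsker/Hoeffding bound: it must be exactly $1/(2M_i^2)$ for the threshold $\|DJD\|_{\mathrm{op}}<1$ to come out, and the bound must be stated cleanly for distinct TV conventions. A secondary routine check is the justification of the decomposition. This requires finiteness of each marginal entropy and of $\int\nu_i\,\dd q^i$, which we handle by restricting to $q$ with all $\KL(q^i\|\pi_i)<\infty$, using the boundedness of the interaction term noted at the outset.
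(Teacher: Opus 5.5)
Your proposal is correct and is essentially the paper's proof. It uses the same reduction of the objective gap to $\sum_i\KL(q^i\|\pi_i)+\frac12 r^{\mathsf T}Jr$, the same Pinsker bound $|r_i|\le M_i\sqrt{2\KL(q^i\|\pi_i)}$, and the same rescaling by $D$. The one difference is that the paper keeps $\sum_i\KL(q^i\|\pi_i)$ and bounds $\frac12 r^{\mathsf T}Jr\ge-\|DJD\|_{\mathrm{op}}\sum_i\KL(q^i\|\pi_i)$, so the gap is at least $(1-\|DJD\|_{\mathrm{op}})\sum_i\KL(q^i\|\pi_i)$, which gives uniqueness in one step instead of your two.

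Your integrability check, which the paper omits, is best written as $\KL(q\|p)=\sum_i\KL(q^i\|\pi_i)+\int\log(\dd\pi/\dd p)\,\dd q$, with $\pi=\pi_1\otimes\cdots\otimes\pi_m$ and a bounded log-ratio. Phrasing it this way matters because, for general $\nu_i$, finiteness of $\KL(q^i\|\pi_i)$ alone does not make the marginal entropy and $\int\nu_i\,\dd q^i$ separately finite, as your sketch assumes.
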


\begin{proof}
For $q=\otimes_iq^i$, put
$K_i=\KL(q^i\|\pi_i)$ and
$r_i=\int s_i\,\dd q^i$.  Up to an additive constant independent of $q$,
the MFVI objective equals
\[
\sum_{i=1}^mK_i+\frac12r^{\mathsf T}Jr.
\]
Pinsker's inequality and the centering of $s_i$ give
$|r_i|\leq M_i\sqrt{2K_i}$.  Writing $r=Dz$ gives
$\|z\|^2\leq2\sum_iK_i$, and therefore
\[
\frac12r^{\mathsf T}Jr
\geq
-\frac12\|DJD\|_{\mathrm{op}}\|z\|^2
\geq
-\|DJD\|_{\mathrm{op}}\sum_iK_i.
\]
The objective gap is therefore at least
$(1-\|DJD\|_{\mathrm{op}})\sum_iK_i$, which is nonnegative and vanishes
only when every $q^i=\pi_i$.
\end{proof}

Let
\[
\nu_a(x)=\frac{x^2}{2}+a\cos x,
\qquad a>1,
\]
and let
\[
\pi_a(\dd x)=Z_a^{-1}e^{-\nu_a(x)}\dd x.
\]
The density $\pi_a$ is even.  Let $J\in\R^{m\times m}$ be symmetric with zero diagonal, and set
\[
s(x)=(\tanh x_1,\ldots,\tanh x_m)^\mathsf T.
\]
Consider
\begin{equation}\label{eq:benchmark-potential}
V_{a,J}(x)
=
\sum_{i=1}^m\nu_a(x_i)
+\frac12s(x)^\mathsf TJ s(x).
\end{equation}
Since $\nu_a''(0)=1-a<0$, the family is nonconvex. Its gradient has at most
linear growth and its Hessian is globally bounded.

For a matrix $J$, write
\[
\|J\|_\infty=\max_i\sum_k|J_{ik}|.
\]

The MFVI minimizer of this benchmark is explicit and unique.

\begin{proposition}\label{prop:benchmark}
Assume $a>1$, $J=J^\mathsf T$, $J_{ii}=0$, and $\|J\|_{\mathrm{op}}<1$.  Then the unique MFVI minimizer associated with \eqref{eq:benchmark-potential} is
\begin{equation*}
q^\star=\pi_a^{\otimes m}.
\end{equation*}
Moreover, \autoref{ass:smooth} holds with
\begin{equation}\label{eq:benchmark-L}
L\le1+a+\|J\|_{\mathrm{op}}+2\|J\|_\infty,
\end{equation}
and, for every $\alpha\in(0,1)$, the uniform defect bound holds with
\begin{equation}\label{eq:benchmark-beta}
\beta
=
\frac{m(a+\|J\|_{\mathrm{op}})^2}{1-\alpha}.
\end{equation}
For this model, the projected-drift measure-sensitivity constant satisfies
\begin{equation*}
\kappa\le\|J\|_{\mathrm{op}}.
\end{equation*}
\end{proposition}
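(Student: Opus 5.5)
Four pieces need checking: uniqueness of the minimizer, the Hessian bound $L$, the defect bound $\beta$, and the bound on $\kappa$.

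\emph{Uniqueness.} We apply \autoref{prop:bounded-feature-uniqueness} with $\nu_i=\nu_a$, $\pi_i=\pi_a$, $s_i=\tanh$ and $M_i=1$, so that $D=I_m$ and the required condition is exactly $\|J\|_{\mathrm{op}}<1$. The centering $\int\tanh\,\dd\pi_a=0$ holds because $\pi_a$ is even and $\tanh$ is odd. We also need $\pi_a$ to be a probability law, which holds since $\nu_a(x)\ge x^2/2-a$.

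\emph{Smoothness constant.} We compute the Hessian directly. Write $t_i=\tanh x_i$ and $s_i'=\operatorname{sech}^2x_i\in(0,1]$. The gradient is
\[
\partial_iV=x_i-a\sin x_i+s_i'(Js)_i .
\]
Hence
\[
\partial_{ik}^2V=s_i'J_{ik}s_k' \quad (k\ne i),
\qquad
\partial_{ii}^2V=1-a\cos x_i+s_i''(Js)_i,
\]
where we used $J_{ii}=0$, and $s_i''=-2\tanh x_i\operatorname{sech}^2x_i$ satisfies $|s_i''|\le1$.

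This splits the Hessian into three parts:
\begin{itemize}
\item the diagonal $\operatorname{diag}(1-a\cos x_i)$, with operator norm at most $1+a$;
\item the matrix $S'JS'$ with $S'=\operatorname{diag}(s_i')$, with operator norm at most $\|J\|_{\mathrm{op}}$;
\item the diagonal $\operatorname{diag}(s_i''(Js)_i)$, with entries bounded by $|(Js)_i|\le\|J\|_\infty$ because $|t_k|\le1$.
\end{itemize}
Summing gives $L\le 1+a+\|J\|_{\mathrm{op}}+\|J\|_\infty$, which is within the stated bound \eqref{eq:benchmark-L}. The factor $2$ in the statement leaves slack for a cruder bound on $s''$.

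\emph{Defect.} We write $V=\|x\|^2/2+U$ with
\[
U(x)=a\sum_i\cos x_i+\tfrac12 s^{\mathsf T}Js,
\qquad
\nabla U=(-a\sin x_i)_i+S'Js .
\]
The first part has Euclidean norm at most $a\sqrt m$. For the second, $\|S'Js\|\le\|J\|_{\mathrm{op}}\|s\|\le\|J\|_{\mathrm{op}}\sqrt m$. Therefore $G\le\sqrt m(a+\|J\|_{\mathrm{op}})$. \autoref{prop:bounded-perturbation} with $\alpha_0=1$ then gives $\beta=G^2/(1-\alpha)$, which is \eqref{eq:benchmark-beta}.

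\emph{Sensitivity constant $\kappa$.} This is the main point to check. In the hybrid matrix \eqref{eq:hybrid-matrix}, row $i$ is evaluated at the point $u[i\leftarrow z_i]$. The off-diagonal entry is $\operatorname{sech}^2(z_i)J_{ik}\operatorname{sech}^2(u_k)$, because coordinate $k\ne i$ of that hybrid point equals $u_k$. The diagonal of $A_z(u)$ is zero, consistent with $J_{ii}=0$. So
\[
A_z(u)=\operatorname{diag}(\operatorname{sech}^2z_i)\,J\,\operatorname{diag}(\operatorname{sech}^2u_k),
\]
a product of two diagonal contractions around $J$, and hence $\|A_z(u)\|_{\mathrm{op}}\le\|J\|_{\mathrm{op}}$. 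Taking the supremum over $z,u$ gives $\kappa\le\|J\|_{\mathrm{op}}$. The only care needed here is confirming that the mixed points never reintroduce $z$ into the off-diagonal column factors, which the computation above shows.
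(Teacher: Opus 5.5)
Your proposal is correct and follows the paper's proof essentially step for step. Uniqueness comes from the Pinsker argument, which you obtain by citing \autoref{prop:bounded-feature-uniqueness} with $D=I_m$ while the paper writes it out inline; $L$ comes from splitting the Hessian; $\beta$ comes from \autoref{prop:bounded-perturbation} with $\alpha_0=1$; and $\kappa$ comes from the factorization $A_z(u)=\operatorname{diag}(\operatorname{sech}^2 z_i)\,J\,\operatorname{diag}(\operatorname{sech}^2 u_k)$. Your only deviation is the sharper bound $|s''|=2|\tanh x|\operatorname{sech}^2x\le 4/(3\sqrt3)<1$. This gives $L\le 1+a+\|J\|_{\mathrm{op}}+\|J\|_\infty$ and so implies \eqref{eq:benchmark-L}, whereas the paper simply uses the cruder factor $2$.
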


\begin{proof}
Let $q=\otimes_iq^i$ have finite MFVI objective, and define
\[
K_i=\KL(q^i\|\pi_a),
\qquad
r_i=\int\tanh x\,q^i(\dd x),
\qquad r=(r_1,\ldots,r_m)^\mathsf T.
\]
Since $\pi_a$ is even, $\int\tanh x\,\pi_a(\dd x)=0$.  Up to a constant independent of $q$, the MFVI objective is
\begin{equation*}
\sum_{i=1}^mK_i+\frac12r^\mathsf T Jr.
\end{equation*}
Pinsker's inequality and $\|\tanh\|_\infty\le1$ give
\[
|r_i|
=
\left|\int\tanh x\,(q^i-\pi_a)(\dd x)\right|
\le\sqrt{2K_i}.
\]
Therefore
\[
\frac12r^\mathsf TJr
\ge-\frac12\|J\|_{\mathrm{op}}\|r\|^2
\ge-\|J\|_{\mathrm{op}}\sum_iK_i.
\]
The objective gap is bounded below by
\[
(1-\|J\|_{\mathrm{op}})\sum_iK_i,
\]
which is nonnegative, with equality only when every $q^i=\pi_a$. Hence the
MFVI minimizer is unique.

Let $D(x)=\operatorname{diag}(\operatorname{sech}^2x_i)$.  The interaction gradient is
\[
\nabla\left(\frac12s^\mathsf TJs\right)=DJs.
\]
Its Hessian is
\[
DJD+
\operatorname{diag}\left(-2\operatorname{sech}^2(x_i)\tanh(x_i)(Js)_i\right).
\]
The first term has operator norm at most $\|J\|_{\mathrm{op}}$.  Since $|(Js)_i|\le\|J\|_\infty$, the diagonal term has norm at most $2\|J\|_\infty$.  The separable Hessian has norm at most $1+a$, proving \eqref{eq:benchmark-L}.

Write $V_{a,J}(x)=\|x\|^2/2+U(x)$.  Since
\[
\nabla U(x)=-a\sin x+DJs,
\]
we have
\[
\|\nabla U(x)\|
\le a\sqrt m+\|J\|_{\mathrm{op}}\|s(x)\|
\le\sqrt m(a+\|J\|_{\mathrm{op}}).
\]
\autoref{prop:bounded-perturbation} with $\alpha_0=1$ gives \eqref{eq:benchmark-beta}.

Finally, only cross derivatives enter the matrix $A_z(u)$ in \eqref{eq:hybrid-matrix}.  The separable part contributes nothing, and the cross-derivative matrix is $D_1JD_2$ for diagonal contractions $D_1,D_2$ with norms at most one.  Hence $\kappa\le\|J\|_{\mathrm{op}}$.
\end{proof}

For the numerical experiments we use a sparse ring interaction.

\begin{remark}
For the numerical experiments, $J$ is a nearest-neighbor ring matrix rescaled so that $\|J\|_{\mathrm{op}}=\lambda<1$.  Its row-sum norm is also bounded independently of $m$.  Hence the interaction contribution to $L$ and the sensitivity constant
$\kappa$ remain dimension-independent, whereas the conservative uniform
defect \eqref{eq:benchmark-beta} grows linearly with $m$.
\end{remark}

\section{Numerical validation}\label{sec:numerics}

All code used to generate the numerical results in this section is available
at the project repository
\href{https://github.com/tvu25/MFVI}{\texttt{github.com/tvu25/MFVI}}.
Further implementation details and the organization of the numerical scripts
are described in Appendix~\ref{app:code}.

The experiments use \eqref{eq:benchmark-potential}.  Unless otherwise stated,
$m=2$, $a=2$, and
\[
J=\begin{pmatrix}0&1/2\\1/2&0\end{pmatrix}.
\]
The exact MFVI minimizer is $q^\star=\pi_2\otimes\pi_2$.  For these baseline parameters,
\eqref{eq:benchmark-L} gives $L\leq4.5$.  With $\alpha=3/4$, the refined
step-size condition \eqref{eq:stepsize} gives $h_0(1)\geq0.00462$ for the
smallest batch $B=1$, while for $B\geq8$ the deterministic bound
$\alpha/L^2\geq0.0370$ is active.  Hence the baseline choice $h=0.004$
satisfies the proved restriction even for $B=1$; the coarse/fine step-size
study up to $h=0.008$ also lies below the deterministic full-drift bound.
The projected drifts are
\[
\barV_1'(x,q^2)
=x-2\sin x+\frac12\operatorname{sech}^2(x)
\int\tanh y\,q^2(\dd y),
\]
and similarly for coordinate two.  Both the full product-empirical drift and its finite-batch approximation can
therefore be evaluated without numerical differentiation.

\subsection{Protocol}

The density and quantiles of $\pi_a$ are computed by deterministic quadrature on $[-8,8]$.  For an empirical row with $N$ particles, its one-dimensional $\cW$ error is evaluated by sorting and comparing with the midpoint quantiles of $\pi_a$.  Product errors use the additivity of squared $\cW$.  Curves labeled ``root mean-square error'' report
\[
\left(\frac1R\sum_{r=1}^R \cW^2(q_{X_n^{(r)}},q^\star)\right)^{1/2},
\]
and shaded regions or error bars equal approximately two standard errors obtained by the delta method.  All experiments generated by \texttt{expanded\_experiments.py} use the base NumPy seed \texttt{20260717}; study-specific seeds are deterministic offsets recorded in the source.  The complete source code, all seeds, raw CSV tables, and plotting scripts are publicly available in the companion GitHub repository \cite{NV2026}; see Appendix~\ref{app:code}.

Two initialization regimes are used.  The convergence and defect experiments start each particle near $3$, which probes a strongly asymmetric initial condition in the nonconvex landscape.  The $N$-, $h$-, $B$-, and dimension-scaling experiments start from independent samples of $q^\star$; this isolates product-empirical, batch, and discretization effects from the much larger transient caused by the asymmetric initialization.

\begin{table}[t]
\centering
\small
\setlength{\tabcolsep}{4pt}
\caption{Baseline numerical configurations.  ``Full'' denotes the exact projected drift under the current product empirical law, corresponding to the $B\to\infty$ limit.}
\label{tab:numerical-config}
\begin{tabular}{@{}lllll@{}}
\toprule
Study & Initialization & $N$ & $h$ & Other parameters\\
\midrule
Time convergence & $\mathcal N(3,0.35^2)$ & 384 & 0.004 & $B=1,4,16,\mathrm{Full}$, $T=6$\\
Particle scaling & stationary & 64--768 & 0.002 & $B=16$, $T=2$\\
Step-size scaling & coupled stationary arrays & 1024 & 0.001--0.008 & Full drift, $h_{\rm ref}=5\!\times\!10^{-4}$\\
Batch scaling & stationary arrays & 2048 & -- & Drift RMSE, $B=1,\ldots,256$\\
Defect scaling & $\mathcal N(3,0.35^2)$ & 384 & 0.002 & $a=1.05,1.6,2.2,3$, $T=6$\\
Dimension scaling & stationary & 256 & 0.002 & $m=2,4,8,16$, ring $J$\\
\bottomrule
\end{tabular}
\end{table}

\subsection{Finite-batch, full-drift, and CAVI comparisons}

\autoref{fig:method-comparison} compares the actual finite-batch PAVI update with the full product-empirical drift.  From the asymmetric initialization, all PAVI variants reduce the root mean-square error from approximately $4.73$ to between $0.77$ and $0.83$ by time $6$.  The small separation between $B=1,4,16$ and the full drift indicates that, in this example, the nonconvex transient and empirical approximation dominate the batch error.  This behavior is consistent with the decomposition in \autoref{cor:uniform}
and should not be interpreted as evidence that the batch term is absent in
general.

For reference, we also implement sequential CAVI by deterministic quadrature on the same one-dimensional grid.  At every sweep, the normalized quadrature weights define a discrete probability measure supported on the grid nodes, and its one-dimensional Wasserstein error to $\pi_a$ is evaluated through the quantile identity
\[
\cW^2(\mu,\nu)=\int_0^1|F_\mu^{-1}(t)-F_\nu^{-1}(t)|^2\,\dd t.
\]
The product error is then obtained by additivity.  It reaches $\cW$ error $2.3\times10^{-3}$ after four sweeps and $3.1\times10^{-4}$ after five.  This comparison is specific to the benchmark: each CAVI update can be
normalized accurately on a one-dimensional grid, whereas PAVI is intended for
settings in which conditional normalization is unavailable or expensive.  The
comparison is therefore diagnostic rather than a runtime ranking.

\begin{figure}[t]
\centering
\begin{subfigure}[t]{0.49\textwidth}
\centering
\includegraphics[width=\textwidth]{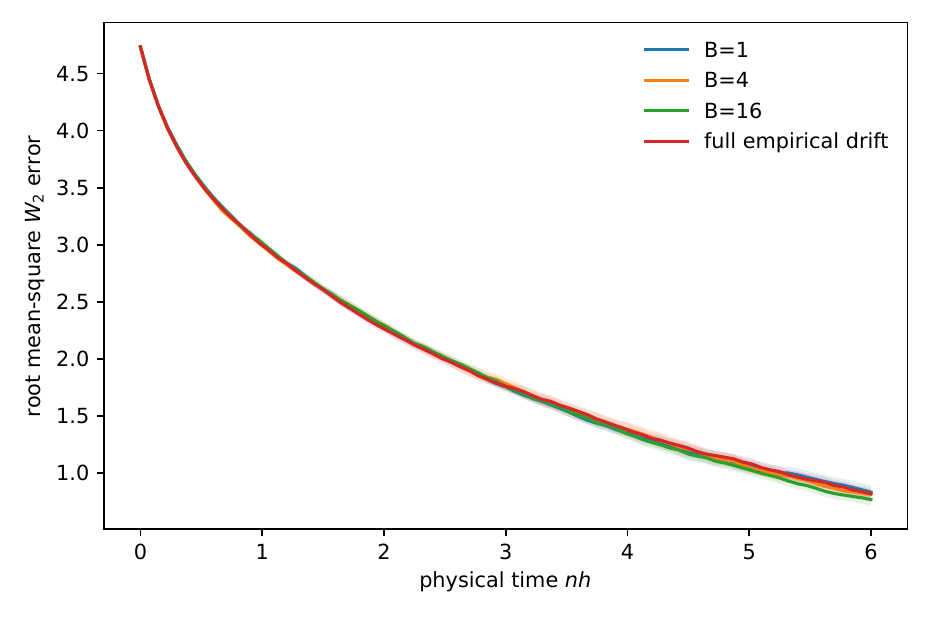}
\caption{Finite-batch and full empirical-drift PAVI.}
\end{subfigure}
\hfill
\begin{subfigure}[t]{0.49\textwidth}
\centering
\includegraphics[width=\textwidth]{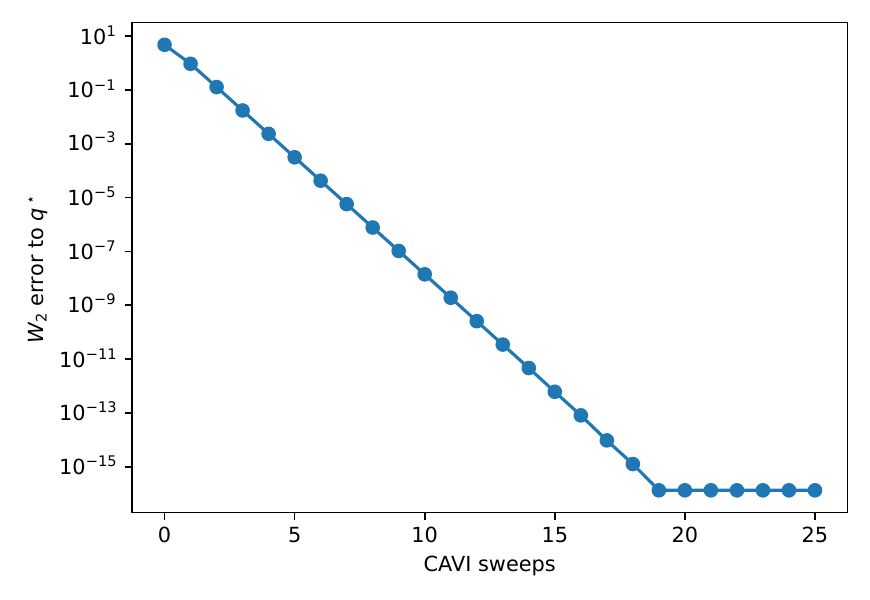}
\caption{Grid-based sequential CAVI; $\cW$ is computed by the one-dimensional quantile formula.}
\end{subfigure}
\caption{Method comparisons on the exact nonconvex benchmark.  The horizontal variables are physical time for PAVI and coordinate sweeps for CAVI, so the panels are not presented as a direct wall-clock comparison.}
\label{fig:method-comparison}
\end{figure}

\subsection{Particle and step-size dependence}

\autoref{fig:particle-step} reports terminal errors from stationary
initialization.  The fitted slope against particle number is $-0.458$, close to
the $-1/2$ scaling observed for this smooth benchmark and faster than the
worst-case $N^{-1/4}$ root-mean-square bound obtained from sixth moments alone.
To isolate time discretization from the empirical floor, the full-drift Euler
scheme at step size $h$ is coupled to finer schemes using the same initial array
and nested Brownian increments.  With reference steps $5\times10^{-4}$ and
$2.5\times10^{-4}$, the fitted slopes over the same four coarse step sizes are
$1.110$ and $1.010$, respectively.  The RMS distance between the two reference
paths is approximately $2.56\times10^{-4}$, so the coarser reference is treated
as a high-resolution comparator rather than a converged solution.  Both fits
are consistent with first-order strong error for Euler--Maruyama with additive
noise under sufficient smoothness; see \cite{KP1992}.  The $O(\sqrt h)$ term in
\autoref{cor:uniform} is therefore conservative for this benchmark.

\begin{figure}[t]
\centering
\begin{subfigure}[t]{0.49\textwidth}
\centering
\includegraphics[width=\textwidth]{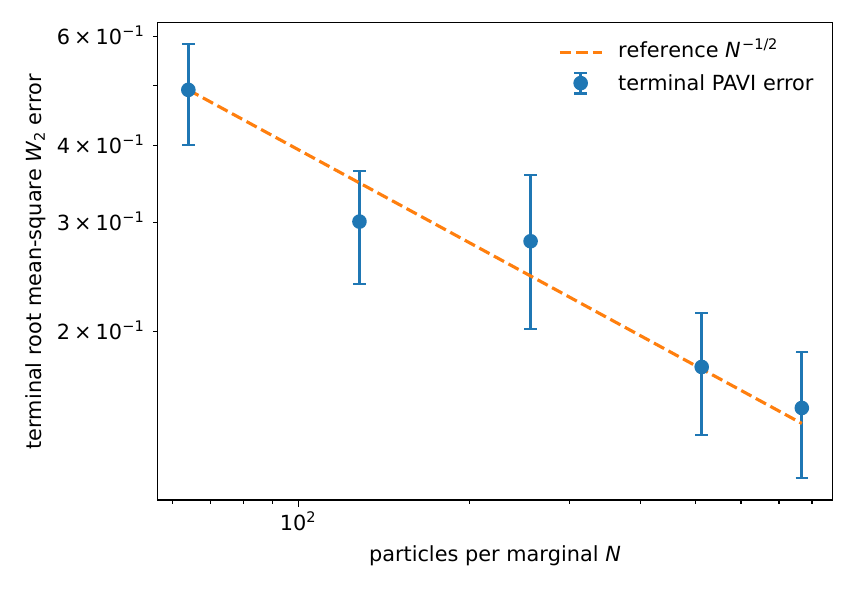}
\caption{Terminal PAVI error versus $N$.}
\end{subfigure}
\hfill
\begin{subfigure}[t]{0.49\textwidth}
\centering
\includegraphics[width=\textwidth]{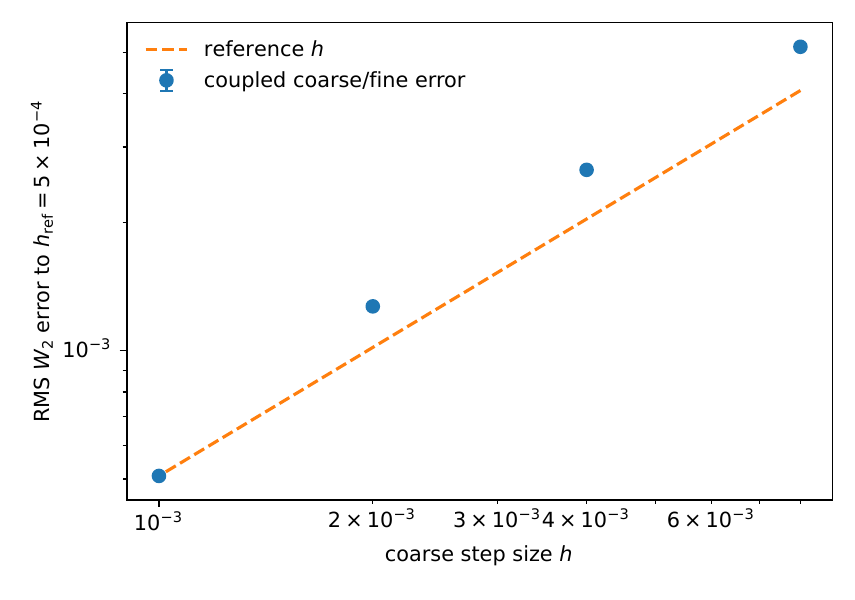}
\caption{Coupled Euler error versus $h$.}
\end{subfigure}
\caption{Particle scaling and a coupled coarse/fine time-discretization study.}
\label{fig:particle-step}
\end{figure}

\subsection{Batch size and curvature defect}

The left panel of \autoref{fig:batch-defect} measures the finite-batch projected-drift error relative to the full product-empirical drift at stationary particle arrays.  The fitted log--log slope is $-0.505$, matching the $B^{-1/2}$ root-mean-square scaling predicted by \autoref{lem:batch}.  When terminal target error is dominated by product-empirical approximation,
this direct diagnostic isolates the batch effect more effectively.  The right panel varies $a$, keeping $\lambda=1/2$ and $\alpha=3/4$.  \autoref{prop:benchmark} gives the conservative bound
\[
\beta=8(a+1/2)^2.
\]
As this bound increases from $19.22$ to $98$, the terminal error from the asymmetric initialization increases from $0.190$ to $2.513$.  The bound $\beta$ is conservative, but the observed monotone deterioration is
consistent with the defect term in the stability estimate.

\begin{figure}[t]
\centering
\begin{subfigure}[t]{0.49\textwidth}
\centering
\includegraphics[width=\textwidth]{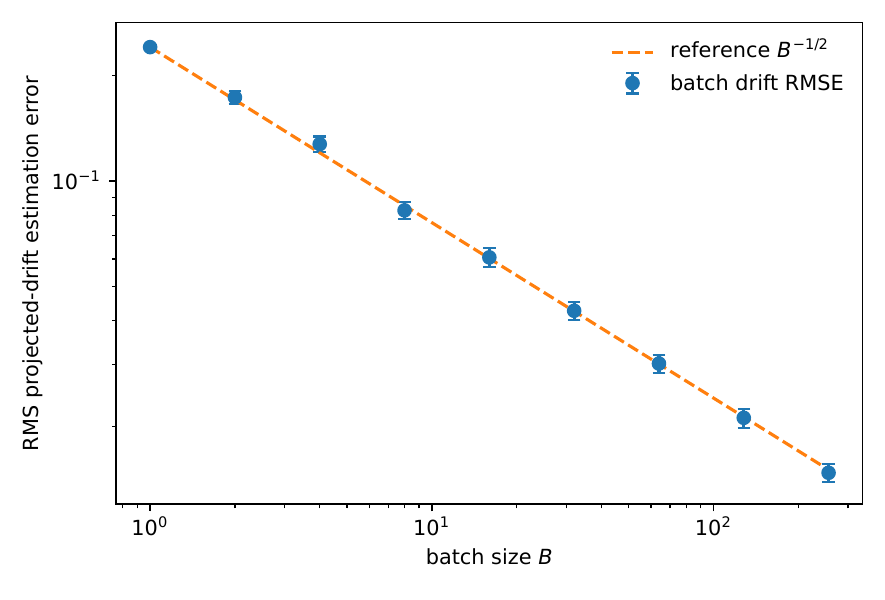}
\caption{Finite-batch drift error versus $B$.}
\end{subfigure}
\hfill
\begin{subfigure}[t]{0.49\textwidth}
\centering
\includegraphics[width=\textwidth]{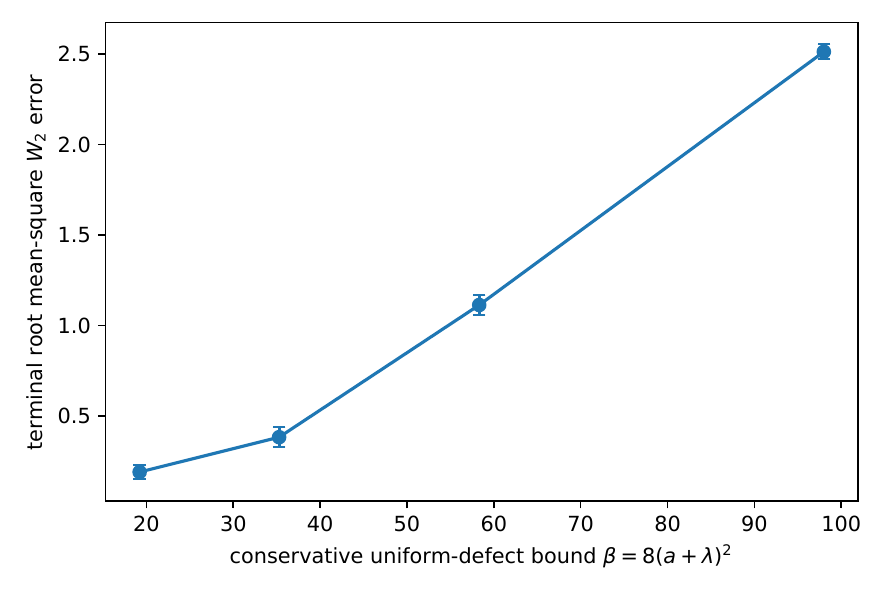}
\caption{Dependence on the uniform-defect bound.}
\end{subfigure}
\caption{Direct stochastic-gradient and curvature-defect diagnostics.}
\label{fig:batch-defect}
\end{figure}

\subsection{Dimension and computational cost}

For the ring family, the terminal error rises from $0.251$ at $m=2$ to $0.639$ at $m=16$, broadly consistent with the $\sqrt m$-type dependence of the product-empirical and Euler terms. After normalization by $\sqrt m$, the terminal error remains nearly
constant over the tested dimensions, in agreement with the
per-coordinate interpretation in \autoref{rem:coordinate-defects}. The generic drift benchmark evaluates all $mNB$ hybrid coordinate gradients rather than using the sufficient-statistic reduction available for \eqref{eq:benchmark-potential}.  In the reported reproducibility run, a log--log fit over workloads at least $32768$ gives slope $0.952$, consistent with the linear work count in \autoref{prop:complexity}.  The measured times are implementation- and hardware-dependent; the reproducible point is the scaling with the number of coordinate-gradient evaluations.

\begin{figure}[t]
\centering
\begin{subfigure}[t]{0.49\textwidth}
\centering
\includegraphics[width=\textwidth]{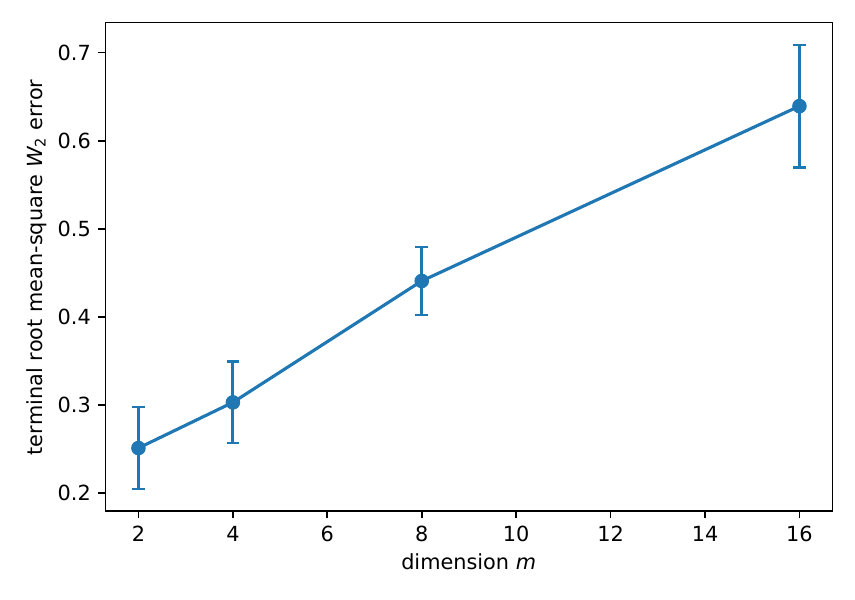}
\caption{Terminal error versus dimension.}
\end{subfigure}
\hfill
\begin{subfigure}[t]{0.49\textwidth}
\centering
\includegraphics[width=\textwidth]{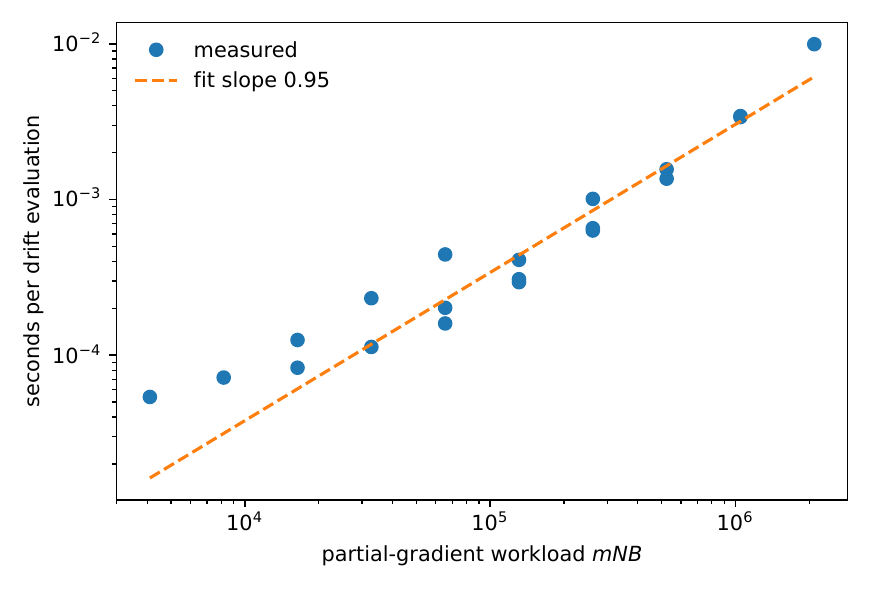}
\caption{Generic drift-evaluation cost.}
\end{subfigure}
\caption{Dimension and computational scaling.}
\label{fig:dimension-cost}
\end{figure}

\paragraph{Numerical conclusions.}
The experiments reproduce the predicted $B^{-1/2}$ batch scaling and show
nearly $N^{-1/2}$ empirical scaling for this smooth benchmark.  The coupled
time-discretization study is approximately first order, and the observed
dimension dependence is consistent with the product geometry.  Increasing the
nonconvexity parameter also increases the terminal error from the asymmetric
initialization.  These experiments do not determine sharp constants or the
optimal dependence on $\beta$.  Because the MFVI minimizer is known
analytically, the reported errors are not contaminated by an unknown
optimization error; the remaining reference error comes only from
one-dimensional quadrature and truncation.

\section{Why polynomial drift growth requires another algorithm}\label{sec:polynomial}

Polynomial Hessian growth cannot replace global smoothness while the explicit
update remains untamed.  Consider
\[
V(x)=\frac{x^4}{4}+\frac{x^2}{2}.
\]
The potential is globally strongly convex, whereas the one-particle Euler
update is
\[
X_{n+1}
=
X_n-h(X_n^3+X_n)+\sqrt{2h}\,\xi_n.
\]
Conditionally on $X_n=x$,
\[
\E[|X_{n+1}|^2\mid X_n=x]
=
|x-h(x^3+x)|^2+2h,
\]
which grows like $h^2|x|^6$.  Hence no global quadratic Foster--Lyapunov inequality can hold, in agreement
with the classical explicit-Euler instability for superlinear drifts
\cite{HJK2011,HJK2012}.

Implicit and semi-implicit schemes, including backward Euler--Maruyama, and tamed explicit methods can handle polynomial drift growth in single-particle SDEs; see \cite{BDMS2019,HJK2012}.  Extending these ideas to PAVI requires controlling the interaction between the stabilization mechanism and the product-empirical projected drift.  In particular, directly taming the stochastic batch estimator $g_n^i(x)$ generally changes its conditional mean and may destroy the unbiasedness
\[
\E[g_n^i(x)\mid X_n]=\barV_i'(x,q_{X_n}^{-i}),
\]
which is used in the orthogonality argument of \autoref{lem:batch}.  Implicit treatment of the projected drift creates a different computational issue because each coordinate solve depends on a random empirical law.  
Related tamed interacting-particle Langevin schemes for superlinear
drifts have been analyzed in \cite{JMS2025}.  Their setting differs
from PAVI, where the projected drift depends on a random product
empirical law and the finite-batch unbiasedness is used essentially
in \autoref{lem:batch}.  Extending the present PAVI analysis to a
suitable tamed or implicit discretization therefore requires
additional ideas and is left for future work.

\section{Discussion}\label{sec:discussion}

The curvature-defect formulation separates the use of strong convexity from
the global smoothness needed by the explicit discretization.  When the coupling
rarely visits nonconvex regions, \eqref{eq:main-defect} is sharper than the
uniform-defect corollary.  Two limitations of the present argument are
structural:
\begin{enumerate}[label=(\roman*)]
\item a uniform additive defect alone does not imply uniqueness of the MFVI minimizer or exact contraction to a selected minimizer;
\item explicit PAVI is not stable for general superlinear drifts.
\end{enumerate}
Reflection-coupling methods for dissipative or curvature-at-infinity drifts
\cite{E2016,EGZ2019,MMS2020} address a complementary regime.  For exact
Langevin diffusions, such methods can yield strict contraction without an
additive residual in a concave or otherwise modified transport metric even
when Euclidean synchronous coupling is not contractive at short distances. In
contrast, the present argument retains the standard quadratic Wasserstein
geometry and is built around synchronous coupling because that coupling is
compatible with the coordinatewise product projection, the stationary
empirical array, and the finite-batch orthogonality. The resulting estimate
contains an explicit accumulated-defect term but gives a direct non-asymptotic
decomposition of particle, batch, and Euler errors for the implementable
scheme.  Extending reflection coupling to the random product-empirical drift would
require a joint coupling of the state and the empirical law approximation.
A possible extension is to combine curvature-at-infinity assumptions with a
coupling adapted to the random product-empirical drift. A second direction is
a structure-preserving tamed or implicit PAVI scheme for polynomial
potentials.

\appendix

\section{Auxiliary Wasserstein facts}

We collect two standard Wasserstein facts used above.  The first is additivity
for product measures.

\begin{lemma}\label{lem:product-additivity}
For $\mu_i,\nu_i\in\cP_2(\R^{d_i})$,
\[
\cW^2\left(\bigotimes_{i=1}^m\mu_i,
\bigotimes_{i=1}^m\nu_i\right)
=
\sum_{i=1}^m\cW^2(\mu_i,\nu_i).
\]
\end{lemma}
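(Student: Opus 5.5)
The plan is to prove the two inequalities ``$\le$'' and ``$\ge$'' separately. Write $\mu=\bigotimes_i\mu_i$ and $\nu=\bigotimes_i\nu_i$ on $\R^{d}$ with $d=\sum_i d_i$. Split each point as $x=(x_1,\ldots,x_m)$ with $x_i\in\R^{d_i}$, so that $\norm{x-y}^2=\sum_i\norm{x_i-y_i}^2$. All distances are finite because each factor lies in $\cP_2$.

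For ``$\le$'', first choose an optimal coupling $\gamma_i\in\Pi(\mu_i,\nu_i)$ for each $i$; these exist by the standard existence of optimal plans in \cite{V2009}. Then form the product coupling $\gamma=\bigotimes_i\gamma_i$, after reordering the variables as $(x_1,\ldots,x_m,y_1,\ldots,y_m)$. The $x$-marginal of $\gamma$ is $\bigotimes_i\mu_i$ and the $y$-marginal is $\bigotimes_i\nu_i$, so $\gamma\in\Pi(\mu,\nu)$. By Fubini,
\[
\int\norm{x-y}^2\,\dd\gamma=\sum_i\int\norm{x_i-y_i}^2\,\dd\gamma_i=\sum_i\cW^2(\mu_i,\nu_i).
\]
This gives the upper bound. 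If one prefers to avoid appealing to existence of optimal plans, the same argument works with $\epsilon$-optimal couplings, letting $\epsilon\to0$ at the end.

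For ``$\ge$'', take any $\gamma\in\Pi(\mu,\nu)$. For each $i$, let $\gamma^{(i)}$ denote the push-forward of $\gamma$ under $(x,y)\mapsto(x_i,y_i)$. Its marginals are the $i$-th marginals of $\mu$ and $\nu$, and for product measures these are exactly $\mu_i$ and $\nu_i$. Hence $\gamma^{(i)}\in\Pi(\mu_i,\nu_i)$. It follows that
\[
\int\norm{x-y}^2\,\dd\gamma=\sum_i\int\norm{x_i-y_i}^2\,\dd\gamma^{(i)}\ge\sum_i\cW^2(\mu_i,\nu_i).
\]
Taking the infimum over $\gamma$ gives the lower bound. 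This direction does not use independence of the coordinates under $\gamma$; only the marginals matter.

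I expect no real obstacle. The only points needing care are bookkeeping ones. The first is that the reordering of variables in the product coupling preserves the cost, which follows from the identity $\norm{x-y}^2=\sum_i\norm{x_i-y_i}^2$. The second is that the marginal identifications use the product structure of $\mu$ and $\nu$, and only in the upper bound.
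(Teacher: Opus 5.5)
Your proposal is correct and follows the paper's own two-line argument. The paper also obtains the upper bound from the product of optimal marginal couplings and the lower bound by projecting an arbitrary coupling of the product measures onto each coordinate pair; you simply write out the cost splitting $\norm{x-y}^2=\sum_i\norm{x_i-y_i}^2$ and the marginal checks that the paper leaves implicit.
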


\begin{proof}
The product of optimal marginal couplings gives the upper bound.  Every coupling of the product measures induces a coupling of each marginal pair, which gives the reverse inequality.
\end{proof}

The second is the $L^2$ triangle inequality for random Wasserstein distances.

\begin{lemma}\label{lem:random-triangle}
For random measures $\mu,\nu,\rho\in\cP_2(\R^d)$,
\[
\left(\E\cW^2(\mu,\nu)\right)^{1/2}
\le
\left(\E\cW^2(\mu,\rho)\right)^{1/2}
+
\left(\E\cW^2(\rho,\nu)\right)^{1/2}.
\]
\end{lemma}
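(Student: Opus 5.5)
The plan is to reduce the random inequality to Minkowski's inequality in $L^2(\Omega)$ applied to the pointwise (realization-wise) Wasserstein triangle inequality. First, for each fixed $\omega$, the quadratic Wasserstein distance is a metric on $\cP_2(\R^d)$, so
\[
\cW(\mu,\nu)\le\cW(\mu,\rho)+\cW(\rho,\nu)
\]
holds realization by realization. The deterministic triangle inequality itself is standard (gluing lemma: glue an optimal coupling of $(\mu,\rho)$ with one of $(\rho,\nu)$ along the common marginal $\rho$, then apply Minkowski in $L^2$ of the glued measure). I would cite \cite{V2009} for it rather than reprove it.

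Second, I regard $\cW(\mu,\nu)$, $\cW(\mu,\rho)$ and $\cW(\rho,\nu)$ as nonnegative random variables. Here I need a measurability remark: I will assume, as in all applications in the paper, that the random measures are measurable maps into $(\cP_2,\cW)$, which is a Polish space. Then the distances are measurable, because $\cW$ is continuous on $\cP_2\times\cP_2$. In the paper's applications the measures are product empirical laws of random arrays or deterministic laws, so this holds directly.

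Third, I take $L^2(\Omega)$ norms. Monotonicity of the norm for nonnegative functions, followed by Minkowski's inequality, gives
\[
\|\cW(\mu,\nu)\|_{L^2}\le\|\cW(\mu,\rho)+\cW(\rho,\nu)\|_{L^2}\le\|\cW(\mu,\rho)\|_{L^2}+\|\cW(\rho,\nu)\|_{L^2}.
\]
This is exactly the claim, and it holds trivially if the right-hand side is infinite.

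There is no real obstacle here. The only point needing care is the measurability of the random distances, which is handled by the continuity of $\cW$ on the Polish space $\cP_2$.
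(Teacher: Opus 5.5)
Your proposal is correct and matches the paper's proof: apply the triangle inequality for $\cW$ realization by realization, then Minkowski's inequality in $L^2$. Your measurability remark (continuity of $\cW$ on the Polish space $(\cP_2,\cW)$) is a harmless clarification that the paper leaves implicit.
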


\begin{proof}
Apply the triangle inequality for $\cW$ pointwise and Minkowski's inequality in $L^2$.
\end{proof}

\section{Reproducibility and implementation details}
\label{app:code}
The complete source code used for the numerical experiments is publicly available in the companion GitHub repository \cite{NV2026}, \url{https://github.com/tvu25/MFVI}.
The repository contains the following three principal scripts.

The file \texttt{pavi.py} provides a generic NumPy implementation of
Algorithm~\ref{alg:pavi}.  In particular, each sample from the current product
empirical law is obtained by drawing the coordinate indices independently.
This distinction is important: using a common particle index across all
coordinates would sample from the empirical law of the particle columns rather
than from the product of the marginal empirical measures.

The file \texttt{expanded\_experiments.py} implements the benchmark family
studied in \autoref{sec:numerics} and generates the numerical figures and
CSV tables reported there.  The random seeds and the numerical parameters used
in the experiments are fixed in the script.

The file \texttt{check\_stepsize\_halving.py} performs the additional
reference-step verification used in the time-discretization study.  It couples
the Euler schemes with step sizes
\[
    h_{\mathrm{ref}}=5\times 10^{-4}
    \qquad\text{and}\qquad
    h_{\mathrm{fine}}=2.5\times 10^{-4}
\]
through nested Brownian increments, so that the comparison isolates the
effect of time discretization.

\smallskip
{\bf Data availability.}
No external dataset is used in this work.  The numerical data underlying the figures are generated by the supplied scripts, and the corresponding raw CSV tables are included in the companion repository \cite{NV2026}.

\end{document}